\documentclass[10pt]{amsart}
\usepackage{graphicx}
 \usepackage[parfill]{parskip}
 \usepackage{amsmath}
\usepackage{amsthm}
\usepackage{amssymb}
\usepackage[T1]{fontenc}
 \newtheorem{theorem}{Theorem}
 \newtheorem{conjecture}{Conjecture}
\newtheorem{lemma}[theorem]{Lemma}
\newtheorem{remark}{Remark}
\newtheorem{proposition}[theorem]{Proposition}
\newtheorem{corollary}[theorem]{Corollary}

\def\a{\alpha}

\def\aa{|A|}

\def\E{\mathsf {E}}

\def\eps{\varepsilon}
\def\d{\delta}

\def\F{{\mathbb F}}


\title{On the energy variant of the sum-product conjecture}

\author{Misha Rudnev}
\address{Misha Rudnev, Department of Mathematics, University of Bristol,
  Bristol BS8 1TW, United Kingdom}
\email{m.rudnev@bristol.ac.uk}

\author{Ilya D. Shkredov}
\address{Ilya D. Shkredov, Steklov Mathematical Institute, Division of Number Theory, ul. Gubkina, 8, Moscow, 119991; IITP RAS, Bolshoy Karetny per. 19, Moscow, 127994 and MIPT, Institutskii per. 9, Dolgoprudnii, 141701}
\email{ilya.shkredov@gmail.com}

\author{Sophie Stevens}
\address{Sophie Stevens, Department of Mathematics, University of Bristol,
  Bristol BS8 1TW, United Kingdom}
\email{sophie\_ccs.2011@bristol.ac.uk}

\subjclass[2000]{68R05,11B75}
\begin{document}
\begin{abstract} We prove new exponents for the energy version of the Erd\H os-Szemer\'edi sum-product conjecture, raised by Balog and Wooley. They match the previously established milestone values for the standard formulation of the question, both for general fields and the  special case of real or complex numbers, and appear to be the best ones attainable within the currently available technology. Further results are obtained about multiplicative energies of additive shifts and a strengthened energy version of the ``few sums, many products"  inequality of Elekes and Ruzsa. The latter inequality enables us to obtain a minor improvement of the state-of the art sum-product exponent over the reals due to Konyagin and the second author, up to $\frac{4}{3}+\frac{1}{1509}$. An application of energy estimates to an instance of arithmetic growth in prime residue fields is presented.
\end{abstract}

\maketitle

\section{Preface}
In this paper we show that the milestone results in the current sum-product theory literature allow for a pure energy formulation involving both addition and multiplication. Previous inequalities of sum-product type involve either $|A\cdot A|$ and $|A+A|$, or  $\E^+(A)$ and $|A\cdot A|$, or $\E^*(A)$ and $|A+A|$. The energy-energy formulation was raised 
by Balog and Wooley \cite{BW}; its key feature being that the sum-product conjecture cannot hold in its maximum strength in the energy formulation. Owing to an example in \cite{BW}, which we will shortly retell, the cardinality  formulation as Conjecture \ref{esc} below should be weakened to the energy version as Conjecture \ref{esc1}. The latter suggests a somewhat uncomfortably-looking fractional exponent. 

We advance largely on the technical level. The well-established tool in additive combinatorics for passing from energy-type results to the existence of subsets with the desired structure is the Balog-Szemer\'edi-Gowers theorem. This has been done, in particular, by Balog and Wooley, as well as much earlier work on the relation between geometric incidences and algebraic growth, such as \cite{BKT}, \cite{GT}. But for someone concerned with quantitative values of the resulting exponents, the use of the Balog-Szemrer\'edi-Gowers theorem usually comes at a price.

It was first observed that the Balog-Szemer\'edi-Gowers theorem can be avoided by Konyagin and the second author \cite{KS2} who succeeded in significantly strengthening the main estimate of \cite{BW}. But the final exponent in the resulting energy-energy inequality in \cite{KS2} has nonetheless stopped short of its cardinality-cardinality predecessor due to Elekes \cite{E}.

In this paper we remove this gap and prove a variety of energy-energy sum-product type inequalities, which have the same exponents as their cardinality prototypes, up to logarithmic factors. The first of our key inequalities dealing with real/complex numbers is \eqref{formula3}, which is the energy-energy analogue of the classical sum-product $5/4$ result by Elekes \cite{E}, which we record here as \eqref{ele}. The two energies appearing in this inequality can be replaced by energies of two different additive shifts of a set (which is useful for applications), see Theorem \ref{t:BW_mult}. The second  inequality is given by  \eqref{formula4}, which, in a certain sharp regime, is the energy version of the ``few sums, many products" inequality by Elekes and Ruzsa \cite{ER}. As a matter of fact  \eqref{formula4} has better exponents than its prototype, see the discussion before the formulation of the corresponding Theorem \ref{t:fsmp}.

We also furnish the general field variants of our energy inequalities, valid in particular in positive characteristic, with an inevitable constraint on how small the set in question should be. They are usually just slightly weaker than for the particular case of the real or complex field. This combines the pruning techniques developed in this paper with the use of the incidence theorem of the first author from \cite{misha} and its further development in  \cite{RRS}, \cite{AMRS}. 

We strongly feel (although we make no attempt to substantiate this claim), that these energy exponents are the best ones attainable within the currently available technology. For the real or complex field there are better sum-product exponents, based on the foundational work by Solymosi \cite{S1}, \cite{Sol05}. But these appear to necessarily involve cardinality of at least one counterpart, rather than the two energies. For general fields, there are no better ones so far. It appears that further significant progress towards the sum-product conjecture challenges one to break these energy barriers.

Finally, in the main body of the paper we discuss some applications of our energy-energy sum-product type inequalities: some in passing and one at length.

\section{Introduction and main results}

Let $\F$ be a field, with the multiplicative group $\F^*$ and let $A\subset \F$ be a finite set. The sum set, product set and quotient set of $A$ are defined  respectively as
\begin{align*}
&A+A:=\{a+b:a,b\in A\},\\
&A\cdot A:=\{a\cdot b:a,b\in A\},\\
&A/A:=\{a/b ~:~ a,b\in{A},\,b\neq0\}.
\end{align*}
Sum, etc. sets involving different sets, say $A+B$ are defined in a similar way.  For $x\in \F^*$, we write simply $xA$ for $\{x\}A$ and $x+A$ or $A+x$ for $\{x\}+A$.

It has long been observed that unless $A$ is in some sense close to a coset of a subfield of $\F$, additive and multiplicative structure find it hard to coexist : it is easy to minimise one of $|A+A|$ or $ |A\cdot A|$ but then the other set becomes very large. The question was raised --  originally in the context of integers -- and the first quantitative result obtained by Erd\H{o}s and Szemer\'{e}di \cite{ES}, leading to the renowned conjecture:

\begin{conjecture} \label{esc}
Let $A\subset \mathbb{R}$. Then
\begin{equation*}
\max(|A + A|,\, |A \cdot A|)\, \gtrsim\, |A|^{2}.
\end{equation*}
\end{conjecture}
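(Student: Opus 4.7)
Since Conjecture \ref{esc} is famously open and no known argument comes anywhere near the full exponent $2$, any plan I propose here is, at best, a route toward a partial result of the kind the paper will go on to refine. My strategy is to argue by contradiction, combining incidence geometry over $\mathbb{R}$ with energy bookkeeping in the spirit of Elekes and Solymosi.

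The first move is the Elekes point-line configuration. Take the point set $P = (A+A) \times (A\cdot A)$ and the line family $L = \{y = a(x-b) : a,b \in A\}$, noting $|L| = |A|^2$. For each $a,b \in A$ the line $y = a(x-b)$ passes through the $|A|$ points $(c+b,\,ac)$ with $c \in A$, so the total number of incidences is at least $|A|^3$. Feeding this into the Szemer\'edi-Trotter theorem,
\begin{equation*}
|A|^3 \;\lesssim\; (|L|\,|P|)^{2/3} + |L| + |P|,
\end{equation*}
yields $|A+A| \cdot |A \cdot A| \gtrsim |A|^{5/2}$ and hence Elekes' bound $\max(|A+A|,|A \cdot A|) \gtrsim |A|^{5/4}$, still far from the conjectured $|A|^2$.

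To push past $5/4$, I would adapt Solymosi's argument: order the elements of $A \times A$ by argument, observe that most pairs of consecutive slopes produce $|A|^2$ sums lying in $(A+A)\times(A+A)$ with bounded multiplicity, while the slopes themselves lie in $A/A$. Combining this with a Pl\"unnecke-Ruzsa transfer from $|A\cdot A|$ to $|A/A|$ gives the current Solymosi $4/3$ threshold. Further refinements, using higher moment energies $\E^+_k$ and $\E^*_k$ together with a structural analysis of sets of large multiplicative energy in the style of Konyagin-Shkredov, squeeze an additional $o(1)$ from the exponent; this is precisely the philosophy the present paper extends to an energy-energy setting.

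The principal obstacle is intrinsic. Every approach I am aware of reduces the problem to a single application of Szemer\'edi-Trotter, which naturally contributes a gain of $1/3$ in the exponent, and no amplification or iteration scheme is known that crosses $4/3$ in a robust way, let alone approaches $2$. A second obstruction, highlighted in the paper via the Balog-Wooley example, is that one cannot hope to close the gap while only working with additive and multiplicative energies: any argument targeting exponent $2$ must at some point exploit cardinality of $A+A$ or $A\cdot A$, not just $\E^+(A)$ and $\E^*(A)$. Consequently, I expect the hardest step of any genuine proof to be finding an ingredient outside the incidence framework, for example an algebraic rigidity input of Elekes-Szab\'o type applied to low-degree curves, capable of breaking the $4/3$ barrier that the present paper effectively identifies as the ceiling of current technology.
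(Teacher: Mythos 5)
You have correctly recognised that the statement labelled as Conjecture~\ref{esc} is not something the paper proves --- or that anyone has proved --- and your response, honestly framed as a survey of partial progress and obstacles rather than a proof, is appropriate. Your Elekes computation is correct: the point set $(A+A)\times(A\cdot A)$ with the $|A|^2$ lines $y=a(x-b)$ gives $|A|^3$ incidences, Szemer\'edi--Trotter then forces $|A+A||A\cdot A|\gtrsim|A|^{5/2}$, hence the $|A|^{5/4}$ bound, and your subsequent account of Solymosi's $4/3$ and the Konyagin--Shkredov refinements matches the history the paper recounts in its introduction. You have also correctly absorbed the paper's own central point: the Balog--Wooley construction shows one cannot reach exponent $2$ working purely with $\E^+$ and $\E^\times$, and the paper accordingly aims only at the weaker energy analogue (Conjecture~\ref{esc1}, Theorem~\ref{mainth}) with exponents $1/4$ and $1/5$ matching the Elekes and Rudnev--Roche-Newton--Shkredov cardinality bounds. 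Since the paper offers no proof of Conjecture~\ref{esc} itself, there is nothing to compare at the level of proof strategy; what you have written is a fair and accurate assessment of why the conjecture remains out of reach.
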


To avoid trivialities we further assume the $0\not\in A$ and $|A|>1$, this will be implicit in all statements we make, as well as that the sets $A,B,C,\dots$ are finite.

As usual, we use the notation $|\cdot|$ for cardinalities of finite sets. The symbols $\ll$, $\gg,$ suppress absolute constants in inequalities, as do their respective equivalents $O$ and $\Omega$. Besides,  $X=\Theta(Y)$ means that $X=O(Y)$ and $X=\Omega(Y)$.

In addition, e.g., in the above statement of Conjecture \ref{esc}, the symbols $\lesssim, \; \gtrsim, \; \sim$ are used to replace, respectively $O,\;\Omega,\;\Theta$ when the inequalities involved are weakened by  a power of $\log|A|$. Thus $|A|$ is viewed as an asymptotic parameter. The suppressed constants  are therefore independent of $|A|$.


If $\F$ has positive characteristic $p$, we only deal with a ``small set'' case, $|A|$ being at most some less-than-$1$ power of $p$, so $p$, always denoting the positive characteristic of $\F$,  is regarded as an asymptotic parameter as well.

The sum-product conjecture remains open although current world records in  \cite{KS2} for the reals and \cite{RRS}, \cite{AMRS} in positive characteristic have edged nearer to the statement of the conjecture.

Fundamental to the study of the sum-product phenomenon is a $L^2$ quantity expressing the additivity or multiplicativity of a set, known as energy. The additive energy between sets $A$ and $B$ is defined as
\begin{equation*}
\E^{+}(A,B):=|\{(a_1,a_2;b_1,b_2)\in A^{2}\times B^2 :a_1+b_1=a_2+b_2\}|.
\end{equation*}
We write $\E^+(A,A)=\E^+(A)$; if this quantity is considerably larger than the trivial lower bound $|A|^2$ (and closer to the trivial upper bound $|A|^3$), we say $A$ has an additive structure.

The multiplicative energy $\E^\times(A,B), \,\E^\times(A)$ is defined similarly.
The Cauchy-Schwarz inequality relates the energy to the Erd\H{o}s-Szemer\'{e}di conjecture: \begin{equation}\label{CSE} \E^{+}(A)|A+A|,~\E^{*}(A)|A\cdot A|\;\geq \;|A|^{4}.\end{equation}
Note that by rearranging the equation defining energy,  say $A+A$ in \eqref{CSE} can be replaced by $A-A$, even though the latter two sets may in principle differ quite a bit.

Geometrically, the multiplicative energy $\E^\times(A)$ equals the number of ordered pairs of points of the plane set $A\times A$, supported on lines through the origin (corresponding to ratios in $A/A$). The additive energy $\E^+(A)$ equals the number of ordered pairs of points of $A\times A$ on parallel lines with the slope plus or minus $1$.

To what quantitative extent do additive and multiplicative structure find it hard to coexist in a set? Recently Balog and Wooley \cite{BW} raised an interesting question as to what is the correct energy, that is $L^2$, formulation of Conjecture \ref{esc}.  If one believes that one of $|A+A|$ or $|A\cdot A|$ must be $\gtrsim|A|^2$ can both $\E^+(A)$ and $\E^\times(A)$ be nonetheless large? An easy example shows, yes: take  $A$  as the union of two disjoint equal in size arithmetic and  geometric progressions. Then both energies are $\Omega(|A|^3)$, that is, up to constants, as big as it gets. Moreover, any subset containing more than, say $51\%$ of  $A$ would have both energies $\Omega(|A|^3)$. On the other hand, for either $\E^+,$ or $\E^\times$, one can find a subset containing every second member of $A$, where the corresponding energy is $\lesssim |A|^2$.

It is possible to intertwine an arithmetic and geometric progression in a smarter way to ensure that any subset of $A$ containing a positive proportion of its members, has both energies considerably in excess of $|A|^2$. Balog and Wooley constructed such an example over the integers. We take a moment to review it briefly, owing to its appeal.

Let $A$ be the union of $n$ disjoint dilates of the integer interval $I= [n^2,\ldots,2n^2)$ by factors $1,2,\ldots, 2^{n-1}$. Note that $A\times A$ contains the union of $n$ disjoint square grids $P=(I\times I) \,\cup\, (2I\times 2I)\,\cup\, \ldots\,\cup \,(2^{n-1}I \times 2^{n-1}I)$. Then $\E^+(A)\gg |A|^{7/3}$ (sum of additive energies of each grid in $P$) while $\E^\times(A)\gg |A|^{7/3}\log^s|A|,$ because of \eqref{CSE} and the fact that $|A\cdot A| \sim n^5$. (See \cite{Fo} proving the explicit value of $s = 0.086...\,$.)  If $A'\subset A$ has cardinality $\alpha|A|$, for $0<\alpha<1$, it is easy to estimate $\E^+(A')$ from below just by looking at  intersections $A'\times A'$ with $P$; the minimum estimate is achieved when  $A'\times A'$ intersects each square forming $P$ uniformly at $(\alpha n^2)^2$ points. For $\E^+(A')$ use \eqref{CSE} and the obvious inclusion $A'\cdot A' \,\subseteq\, A\cdot A.$ Thus
$$\E^+(A'), \,\E^\times(A') \gg \alpha^4 n^7 = \alpha^4 \Theta( |A|^{7/3} ).$$
Besides, $\E^\times(A')$ actually exceeds the right-hand side by a power of $\log|A|$. By slight manipulations with the number $n$ of dilates of $I$ versus its size $n^2$, one can easily ensure that the logarithmic factor in $|A|$ is present in the estimate for both energies of $A'$.

The Balog--Wooley example shows that even though multiplicative and additive structure may not conjecturally coexist in any $A$ in the strong sense of Conjecture \ref{esc}, they cannot be completely divorced in the $L^2$ sense even by taking reasonably small subsets.
On the other hand, we do not believe that the standard construction arsenal offers a stronger one than described above. It seems likely that the following claim is true.

\begin{conjecture} \label{esc1}
Let $A\subset \mathbb{R}$. There exists $A'\subseteq A$, such that $|A'|\geq |A|/2$, and
\begin{equation*}
\min [ \E^+(A'),\,\E^\times(A') ] \lesssim |A|^{7/3}.
\end{equation*}

\end{conjecture}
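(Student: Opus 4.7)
The plan is a two-step pruning scheme designed to saturate exactly at the conjectured $|A|^{7/3}$ barrier set by the Balog-Wooley example. If $\min[\E^+(A), \E^\times(A)] \lesssim |A|^{7/3}$ holds at the outset, take $A' = A$; otherwise both energies exceed $|A|^{7/3}$ by a polylogarithmic factor, and one must identify a large subset on which at least one energy collapses.

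First, I would dyadically decompose the additive representation function $r_+(x) = |\{(a,b)\in A^2 : a+b=x\}|$ and isolate a level set $P_+ = \{x : r_+(x) \sim \Delta\}$ supporting a $\log^{-O(1)}|A|$ fraction of $\E^+(A)$, and do the same for the multiplicative representation function, obtaining $P_\times$ at scale $\Delta'$. The elements of $A$ are then stratified by their participation counts in $P_+$ and $P_\times$: let $A_+ \subseteq A$ consist of those $a$ appearing in $\gg \Delta|A|/|P_+|$ popular additive pairs, and define $A_\times$ analogously on the multiplicative side.

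Second, I would apply the paper's BSG-free energy-energy sum-product machinery, namely the $5/4$-analogue of Elekes' inequality together with its few-sums-many-products refinement and the Szemer\'edi-Trotter input $|B+B|\cdot|B\cdot B| \gtrsim |B|^{5/2}$, to the structured substrate $A_+\cap A_\times$ in order to force at least one of $|A_+|, |A_\times|$ to be small. Removing $A_+ \cup A_\times$ from $A$ produces the candidate $A'$, and the residual energy on $A'$ is then estimated dyadically using only the non-popular scales, which contribute $\lesssim |A|^{7/3}$ to at least one of the two energies by construction.

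The main obstacle is guaranteeing that the ``bad'' subsets $A_+, A_\times$ really have size at most $(1-c)|A|/2$. Any invocation of Balog-Szemer\'edi-Gowers to convert popular-sum structure into honest subset structure loses a polynomial factor, producing substrates of size only $\sim |A|^3/\E^+(A) \sim |A|^{2/3}$ in the sharp regime, which is far too small to control the residual energy after removal. Circumventing this BSG loss, as initiated by Konyagin and the second author in \cite{KS2}, is the principal obstruction to the conjecture; in this paper we do not overcome it in full, and are content with weaker variants of the $7/3$ exponent. Any complete proof must additionally pass a sanity check against the Balog-Wooley construction recalled above, where the infimum is saturated by $A' = A$ itself.
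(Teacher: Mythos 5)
The statement you have been assigned is Conjecture \ref{esc1}, not a theorem: the paper supplies no proof of it, and it is open. The exponent $7/3$ is not a target the authors reach; it is the value saturated by the Balog--Wooley construction (the $n$ disjoint dilates of an integer interval), and the paper proposes it as the \emph{correct} form of the sum-product conjecture at the $L^2$ level precisely because that example caps what can possibly be true. What the paper actually proves is Theorem \ref{mainth}: one can always find $A'\subseteq A$ with $|A'|\geq |A|/2$ and $\min[\E^+(A'),\E^\times(A')]\lesssim |A|^{3-\delta}$ with $\delta=1/4$ over $\mathbb C$ (and $\delta=1/5$ in general). Since $3-1/4=11/4 > 7/3$, this falls well short of the conjecture, and the paper says explicitly that pushing past $\delta=1/4$ requires a conceptual innovation beyond current tools.

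Your sketch is therefore not a gap in the usual sense, because you end by acknowledging that you cannot close the argument --- which is in fact the honest state of affairs. Your diagnosis of the obstruction is essentially the one the paper makes: Balog--Szemer\'edi--Gowers loses a polynomial factor, and the pruning scheme of Konyagin and Shkredov (and of this paper) avoids that loss but still only delivers exponents matching the Elekes bound $|A|^{5/4}$, i.e.\ $\delta=1/4$. Two smaller points of calibration. First, the machinery you cite, the $5/4$ Elekes input and the few-sums-many-products refinement, is exactly what is used to establish the $\delta=1/4$ and $\delta=1/5$ results (Propositions \ref{l:BW_FF}, \ref{l:BW_R} and Theorem \ref{t:fsmp}), so feeding it into your two-step scheme cannot yield more than the paper's Theorem \ref{t:BW_R}. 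Second, your pruning step removes $A_+\cup A_\times$ in one shot, whereas the paper's successful argument (conclusion of the proof of Theorem \ref{t:BW_R}) \emph{iterates} the extraction of low-additive-energy pieces $D_j$ and then recombines them via Lemma \ref{1/4ineq}; a single removal does not control the residual energy, which is why the iteration is needed even for the weaker $\delta=1/4$ statement.
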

Note that trivially we cannot expect to destroy multiplicative (respectively additive) structure by taking a subset, unless the latter is very thin, as is the case if, for example, $A$ is a geometric (respectively arithmetic) progression.

Balog and Wooley formulated their results in terms of the decomposition of $A$ as follows.
\begin{theorem}[\cite{BW}]
    Let $A\subset \mathbb R$ be a  set and $\d = 2/33$.
    Then there are two disjoint subsets $B$ and $C$ of $A$ such that $A = B\sqcup C$ and
$$
    \max\{ \E^{+} (B), \E^{\times} (C)\} \lesssim |A|^{3-\delta}.
$$

\label{t:BW}
\end{theorem}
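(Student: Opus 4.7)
My plan is to decompose $A$ by each element's contribution to $\E^+(A)$ and, for the additively-rich part, exploit the sum-product tension between additive and multiplicative structure via the Elekes sum-product estimate together with Balog--Szemer\'edi--Gowers.

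For each $a\in A$ set
\[
    r_+(a) := |\{(x,y,z)\in A^{3} : a+x=y+z\}|,
\]
so that $\sum_{a\in A} r_+(a) = \E^+(A)$. Fix a threshold $T$ to be optimised later, and put
\[
    B := \{a\in A : r_+(a)\leq T\}, \qquad C := A\setminus B.
\]
Since $\E^+(B)=\sum_{b\in B} |\{(x,y,z)\in B^{3}: b+x=y+z\}|\leq\sum_{b\in B}r_+(b)\leq T|A|$, the estimate $\E^+(B)\lesssim |A|^{3-\delta}$ holds automatically whenever $T\lesssim |A|^{2-\delta}$. The entire work then lies in bounding $\E^\times(C)$.

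For that, I would first localise $C$ by a dyadic pigeonhole: extract a level set $P\subseteq C$ on which $r_+(a)\sim M\geq T$ uniformly, and $|P|M\gtrsim \E^+(A)$ up to a logarithm. The estimate $\sum_{a\in P}r_+(a)\gtrsim |P|M$ is a mixed energy $\E^+(P,A)$; applying Cauchy--Schwarz and an asymmetric Balog--Szemer\'edi--Gowers, followed by a Pl\"unnecke--Ruzsa step, produces $P'\subseteq P$ with $|P'|\gtrsim |P|/K$ and $|P'+P'|\lesssim K|P'|$, the doubling $K$ being polynomial in $|A|^{3}/(|P|M)$. Elekes' inequality $|X+X|^{2}|X\cdot X|^{2}\gtrsim |X|^{5}$ applied at $X=P'$ then gives $|P'\cdot P'|\gtrsim |P'|^{3/2}/K$, hence via \eqref{CSE} the bound $\E^\times(P')\lesssim K|P'|^{5/2}$. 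A Pl\"unnecke-style propagation from $P'$ back to $C$, or an iterative removal of such subsets until no further extraction is possible, should then control $\E^\times(C)$ by roughly the same quantity, up to logarithmic factors.

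The main obstacle is a quantitative balancing act. The threshold $T$, the BSG doubling $K$ and the Elekes exponent $5/4$ must jointly satisfy both $T|A|\lesssim |A|^{3-\delta}$ and $K|A|^{5/2}\lesssim |A|^{3-\delta}$, while the parameters $|P|$, $M$ and $K$ are linked through the mixed-energy BSG inequality. Solving this optimisation should yield $\delta=2/33$, but any looseness in the BSG step directly degrades $\delta$, and controlling these polynomial losses tightly is precisely the source of difficulty. It is exactly this tension that the later sections of the paper propose to bypass by replacing BSG with sharper, incidence-theoretic arguments.
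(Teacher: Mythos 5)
The theorem you are addressing is quoted in this paper from Balog--Wooley \cite{BW} and is not re-proved here; the most relevant proofs in the paper are the stronger, BSG-free Theorem~\ref{t:BW_R} and the BSG-based Proposition~\ref{t:sp_E_E'}. Your high-level ingredients --- thresholding on additive popularity $r_+$, dyadic pigeonhole onto a level set $P$, a Balog--Szemer\'edi--Gowers extraction, Elekes' estimate $|X+X|^2|X\cdot X|^2\gg |X|^5$ and then Cauchy--Schwarz to convert a large $|P'\cdot P'|$ to a small $\E^\times(P')$ --- are indeed the right ones and are the conceptual template behind \cite{BW} and Proposition~\ref{t:sp_E_E'}. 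The trivial bound $\E^+(B)\le T|A|$ for the low-popularity part $B$ is also correct.

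The genuine gap is in your final ``propagation'' step: fixing $B$ and $C$ by a single threshold $T$ up front and then trying to control $\E^\times(C)$. What BSG+Pl\"unnecke gives you is only a subset $P'\subseteq P\subseteq C$ of size $|P'|\gtrsim |P|/K$ with small doubling, not a bound on $\E^\times(C)$; to finish you must exhaust $C$ by such pieces and sum via the energy triangle inequality (Lemma~\ref{1/4ineq}). But after removing $P'$ the leftover elements of $C$ retain their high $r_+$ (which is computed relative to $A$, not to $C$), so the next dyadic/BSG round may return a piece that is a vanishing fraction of what remains, with no uniform lower bound on $|P'_j|$ and hence no bound on the number of iterations. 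The paper-length work in \cite{BW} (and in this paper's own proof of Theorem~\ref{t:BW_R}) is precisely an adaptive iteration in which $B$ and $C$ are \emph{built} as the extraction proceeds, with a stopping rule tied to the residual multiplicative energy, rather than fixed in advance; this is what makes the parameter bookkeeping close. As written, your proposal asserts rather than proves that the iteration terminates, leaves the Pl\"unnecke loss and the final optimisation of $T,M,K$ as an admitted obstacle, and therefore does not establish any concrete $\delta>0$, let alone $\delta=2/33$.
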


\begin{remark} The sum-product phenomenon is not restricted specifically to reals. Its study in prime residue fields $\F_p$ was initiated by Bourgain, Katz, and Tao \cite{BKT}.  If  the field $\F$ has positive characteristic $p>0$,  we consider the case when $A$ is suitably small in terms of $p$. We do not know of any evidence that Conjectures  \ref{esc}, \ref{esc1} may be false if, say $|A|<p^{1/3}$. To this end \cite{BW} contains a positive characteristic version of Theorem \ref{t:BW}, with a smaller value of
$\delta=4/101,$ subject to the constraint roughly $|A|<p^{101/161}.$\label{rmk}\end{remark}

The two main ingredients of the argument in \cite{BW} were the two following geometric incidence theorems and additive combinatorics. The known sum-product results over the real or complex field are somewhat  stronger than in fields of positive characteristic largely due to order properties of reals, which so far have been indispensable for proofs of  the celebrated Szemer\'edi-Trotter theorem in the plane. See \cite{SzT}, \cite{To} for the original proof for reals and subsequent extension to the complex field.

\begin{theorem} [Szemer\'edi-Trotter Theorem]\label{t:ST} The number of incidences between a set of $m$ lines and $n$ points in $\mathbb C^2$ is $O[(mn)^{2/3} + m + n]$. \end{theorem}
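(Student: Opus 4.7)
The plan is first to establish the bound over $\mathbb{R}^2$ via Sz\'ekely's crossing-number argument, and then to invoke T\'oth's extension to $\mathbb{C}^2$, which I would treat as a black-box citation rather than reproduce.

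For the real case, let $P$ be a set of $n$ points and $L$ a set of $m$ lines in $\mathbb{R}^2$, write $I = I(P,L)$, and assume without loss of generality that every line in $L$ contains at least one point of $P$. I would construct a plane drawing of a graph $G$ whose vertex set is $P$, and whose edges are the segments of each line $\ell \in L$ joining consecutive points of $P \cap \ell$ along $\ell$. A line carrying $k_\ell \geq 1$ points of $P$ contributes $k_\ell - 1$ edges, so
$$|E(G)| \;\geq\; I - m.$$
Two edges of $G$ can only cross at a point where two lines of $L$ meet, and each such pair of lines meets in at most one point, so $\operatorname{cr}(G) \leq \binom{m}{2}$.

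Next I would apply the crossing-number inequality $\operatorname{cr}(G) \gtrsim |E(G)|^3/|V(G)|^2$, valid as soon as $|E(G)| \geq 4|V(G)|$. Inserting the two bounds gives
$$(I-m)^3 \;\lesssim\; m^2 n^2,$$
which rearranges to $I \lesssim (mn)^{2/3} + m$. The complementary regime $|E(G)| < 4|V(G)|$ yields $I \leq 4n + m$, and taking the maximum produces the asserted bound $O\bigl((mn)^{2/3} + m + n\bigr)$.

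The genuinely hard step is the extension to $\mathbb{C}^2$. Complex lines realise as real affine $2$-flats in $\mathbb{R}^4$, and two such flats still intersect in at most a single real point, so the underlying incidence combinatorics is of the same type; but one loses the planarity needed to run Sz\'ekely's argument directly. Following T\'oth, I would instead pursue a cell-decomposition scheme in $\mathbb{R}^4$: sample a random subfamily of the complex lines, decompose $\mathbb{R}^4$ into cells based on this sample, bound the incidence count inside a typical cell by an extremal "few incidences per line per cell" estimate, and carefully account for lines that cross cell boundaries. This book-keeping in four real dimensions is the main obstacle and is precisely where I expect to defer to the literature rather than reconstruct the argument in full.
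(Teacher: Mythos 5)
The paper states Theorem \ref{t:ST} as a known result and gives no proof of its own, citing \cite{SzT} and \cite{To} exactly as you do; your Sz\'ekely crossing-number argument for the real case is the standard proof and is correct, and deferring the extension to $\mathbb{C}^2$ to T\'oth matches the paper's treatment. So the proposal is correct and takes essentially the same approach as the paper.
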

For arbitrary fields the first author proved a weaker geometric incidence theorem in  $\F^3$.
\begin{theorem} [\cite{misha}] \label{t:MR} The number of incidences between a set of $m$ planes and $n\leq m$ points in $\F^3$ is
$O[m (n^{1/2} +k )]$, where $k$ is the maximum number of collinear points, under an additional constraint $n\leq p^2$ in positive characteristic.
\end{theorem}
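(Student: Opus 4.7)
The plan is to reduce the question of point-plane incidences in $\F^3$ to a question about rich points on a configuration of lines, and then to invoke an algebraic line-incidence bound of Guth-Katz type. For each pair of distinct non-parallel planes in $\Pi$, record their line of intersection; let $L$ denote the resulting collection of lines, so $|L| \ll m^2$. Writing $d(p) := |\{\pi \in \Pi : p \in \pi\}|$, we have $I = \sum_p d(p)$. A point $p$ with $d(p) = d$ typically lies on $\binom{d}{2}$ distinct lines of $L$, with a possible deficit only when several planes through $p$ share a common axis -- a situation in which the $k$ bound on collinear points of $P$ applies.

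The core input is the line-incidence bound in $\F^3$ of the first author: if $L$ is a set of lines, no more than $\sqrt{|L|}$ of which are coplanar or lie on a common regulus, then the number of points on at least $s$ of the lines in $L$ is $\ll |L|^{3/2}/s^2 + |L|/s$, valid over $\F$ of positive characteristic subject to the constraint $n \leq p^2$ on the number of points. The main obstacle is this bound itself: one replaces the real-analytic polynomial partitioning of Guth-Katz with an algebraic argument based on ruled surfaces and flecnode polynomials, and the Bezout step in this argument is exactly what forces the characteristic condition.

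Granted the line-incidence bound, we estimate $I$ by a dyadic decomposition over the levels $d(p) \sim r$. For $r \geq m/\sqrt{n}$, applying the rich-point bound with $s \sim r^2$ yields at most $O(m^3/r^4 + m^2/r^2)$ points at level $r$, so the summand $r N_r$ is $O(m^3/r^3 + m^2/r)$; summing over dyadic $r$ up to $m$ gives $O(m\sqrt{n})$. For $r < m/\sqrt{n}$, the trivial bound $N_r \leq n$ contributes at most $(m/\sqrt{n})\cdot n = m\sqrt{n}$. The degenerate configurations -- pencils of planes through a common axis, each axis carrying at most $k$ points of $P$ -- are separated off and accounted for by a contribution of $O(mk)$. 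Combining these yields the asserted $O(m(n^{1/2}+k))$ bound, with the hypothesis $n \leq m$ entering to ensure the dyadic sum is in the correct regime.
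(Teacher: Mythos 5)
This theorem is cited in the present paper as a black box from \cite{misha}; no proof is given here, so strictly speaking there is no ``paper's own proof'' to compare against. Measured against what \cite{misha} actually does, your proposal is a genuinely different reduction, and it has a real gap.

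The actual proof in \cite{misha} does not form pairwise intersection lines of the planes. It uses the Pl\"ucker--Klein correspondence: a point of $\mathbb{P}^3$ corresponds to an $\alpha$-plane and a plane to a $\beta$-plane inside the Klein quadric in $\mathbb{P}^5$, incidence being equivalent to the two $2$-planes meeting in a projective line. After a generic projective transformation and projection, each of the $n$ points and each of the $m$ planes becomes a \emph{single} line in a three-space, and each point-plane incidence becomes exactly one line-line intersection. One then invokes the Guth--Katz type bound for $O(m+n)$ lines. Because only $O(m+n)$ lines appear, the positive-characteristic constraint from that line bound translates directly into $n\le p^2$, and the ``$k$ collinear points'' hypothesis is precisely what controls the lines-in-a-plane/regulus degeneracy after projection.

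Your reduction takes $L$ to be the $\Theta(m^2)$ pairwise intersection lines, and wants a point $p$ with $d(p)\sim r$ to be a $\sim r^2$-rich point of $L$. That heuristic fails already for near-pencils: if $d-1$ of the planes through $p$ share a common axis $\ell$ and one extra plane is transversal, then the planes through $p$ generate only $d$ distinct lines of $L$ through $p$ (the line $\ell$ plus the $d-1$ intersections with the transversal), not $\Theta(d^2)$. More generally, projecting the $d$ planes through $p$ to lines in $\mathbb{P}(T_p\F^3)\cong\mathbb{P}^2$, the de Bruijn--Erd\H{o}s phenomenon shows that $d$ lines not all concurrent can determine as few as $d$ intersection points. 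Such a point is only $\Theta(d)$-rich, and feeding $s\sim r$ instead of $s\sim r^2$ into the rich-point bound with $|L|\sim m^2$ gives $r N_r \gg m^2$, which is far too large. These near-pencil configurations are not eliminated by the ``pencils through a common axis, each carrying $\le k$ points'' carve-out, since only $d-1$ of the planes lie in a pencil while the transversal does not. A second problem is that with $|L|\sim m^2$ the positive-characteristic constraint in the line theorem becomes roughly $m\lesssim p$, which is not what is stated and is not implied by $n\le p^2$. Both problems disappear in the Klein-quadric reduction, which is why \cite{misha} proceeds that way.
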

Thus real or complex numbers will be the special case in the sequel, and since $\mathbb R$ is not special for the ensuing discussion versus $\mathbb C$, we formulate the corresponding results in terms of the latter field.

It was shown in \cite{AMRS} that Theorem \ref{t:MR} implies a weaker version of the Szemer\'edi-Trotter theorem for a general $\F$, with the main term $m^{3/4}n^{2/3}$ if the point set is a Cartesian product $A\times B$, with $|B|\le |A|\leq p^{2/3}$ in positive characteristic. This was recently improved by the third author and de Zeeuw and generalised to arbitrary  point sets as follows.

\begin{theorem} [\cite{SZ}]\label{t:SzT_Fp}
Consider a set of $m$ lines in $\F^2$.

(i) Let $A\times B\subset \F^2$  be a set  of $n$ points, with $n^{1/2}<m<n^{3/2}$ and the constraint $mn^2<p^4$ in positive characteristic. The number of incidences between the above sets of points and lines is  $O(m^{3/4}n^{5/8}).$

(ii) For any set of $n$ points in $\F^2$, with $n^{7/8}<m<n^{8/7}$ and $m^{13}n^{-2} < p^{15}$, the number of incidences with the set of $m$ lines is $O[(mn)^{11/15}].$

\end{theorem}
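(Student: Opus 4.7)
The plan is to derive both parts of Theorem~\ref{t:SzT_Fp} by lifting the planar incidence problem to a point--plane incidence problem in $\mathbb{F}^3$ and invoking Theorem~\ref{t:MR}. The argument begins with a standard dyadic decomposition of $L$: grouping lines into $O(\log n)$ classes by incidence-multiplicity, it suffices to bound the contribution $I_T$ of a subfamily $L_T$ whose lines all contain between $T$ and $2T$ points of $P$. The number of ordered pairs of distinct collinear points from $P$ on $L_T$-lines is $\sim |L_T|T^2 \geq I_T^2/|L_T|$ by Cauchy--Schwarz, so any upper bound on such pair counts upgrades to a bound on $I_T$.

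For part~(i), where $P = A \times B$, discard the $O(|A|+|B|)$ axis-parallel incidences and parameterize each remaining $\ell \in L_T$ by slope and intercept, $y = s_\ell x + t_\ell$. Two points $p_i = (a_i, b_i) \in P$ collinear on a line of slope $s$ satisfy $b_1 - s a_1 = b_2 - s a_2\,(=t)$. This is bilinear in $(a,s)$, but linear in the auxiliary variable $t = b - sa$, which suggests the lifted point set $Q \subset \mathbb{F}^3$ consisting of triples $(a, t_\ell, s_\ell)$ encoding each incidence $(a, s_\ell a + t_\ell, \ell)$, and planes $\pi_{(a_0, b_0)} : Y + a_0 Z = b_0$ indexed by $(a_0, b_0) \in P$. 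Pairs of collinear incidences correspond bijectively to point--plane incidences in $\mathbb{F}^3$. The Cartesian structure of $A \times B$ forces the maximum collinearity $k \lesssim n^{1/2}$ in $Q$. Applying Theorem~\ref{t:MR} (in the appropriate regime, swapping points and planes by projective duality if $|Q| > |\Pi|$) and optimizing over $T$ should yield $I \lesssim m^{3/4} n^{5/8}$; the hypothesis $mn^2 < p^4$ reflects the $|Q| \leq p^2$ constraint required of Theorem~\ref{t:MR} in positive characteristic.

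For part~(ii), where $P$ is a general planar set, the Cartesian-product trick is unavailable. The plan is to reduce to a structured subconfiguration: by pigeonhole in the stated regime, there is a pivot point $p_0 \in P$ incident to many lines of $L_T$; translate to place $p_0$ at the origin, so that lines through $p_0$ are parameterized by a single slope. The resulting sub-problem admits a lift of the same shape as above, now counting collinear $4$-tuples rather than pairs in order to compensate for the missing grid structure. Applying Theorem~\ref{t:MR} to this enlarged lift and balancing the parameters yields $I \lesssim (mn)^{11/15}$ in the stated range, with the constraint $m^{13} n^{-2} < p^{15}$ arising as the $|Q| \leq p^2$ cap.

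The main obstacle is linearising the collinearity relation: the naive attempt to use $(a, b, s)$ as coordinates produces a bilinear equation, not a plane, so one is forced to use the auxiliary $t = b - sa$, which enlarges $Q$ and introduces additional potential collinearities. The crux is then to control $k$: in part~(i) this is done by the Cartesian structure of $A \times B$, while in part~(ii) a more subtle pivoting argument is required and is responsible for the weaker exponent $11/15$ in place of $5/8$. A secondary technical issue is matching the regime of Theorem~\ref{t:MR} (points no more numerous than planes) via projective duality while tracking logarithmic losses across dyadic scales so that the final exponent is not degraded.
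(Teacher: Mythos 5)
The statement you are trying to prove is a citation: the paper itself gives no proof of Theorem~\ref{t:SzT_Fp}, which is quoted verbatim from \cite{SZ}, so there is no in-paper argument to compare against. What you have written is a conjectural reconstruction of the proof in \cite{SZ}, and it is in the right spirit (lift collinear pairs to point--plane incidences in $\F^3$ and invoke Theorem~\ref{t:MR}), but it contains genuine gaps at the places where the real work in \cite{SZ} happens.

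The central gap is your collinearity control for the lifted point set $Q$. You correctly observe that $Q=\{(a,t_\ell,s_\ell)\}$ turns pairs of incidences on a common line into point--plane incidences, and that a line of $\F^3$ that is parallel to the $X$-axis meets $Q$ in at most $|A|\approx n^{1/2}$ points. But a general line of $\F^3$ with nonconstant $(Y,Z)$-coordinates meets $Q$ in as many points as there are lines $\ell\in L_T$ whose dual $(t_\ell,s_\ell)$ lies on a fixed affine line, i.e.\ as many as the number of $L_T$-lines through a common point of $\F^2$. This quantity is not $O(n^{1/2})$ \emph{a priori}; nothing in the Cartesian structure of $P$ constrains it, and it can in fact be as large as $|L_T|$. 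Your sentence ``The Cartesian structure of $A\times B$ forces the maximum collinearity $k\lesssim n^{1/2}$'' therefore covers only one family of directions, and the $km$ term in Theorem~\ref{t:MR} is left uncontrolled in exactly the hard case. Related to this, the ``optimizing over $T$'' step is asserted rather than carried out: if the collinearity term could be ignored entirely, the chain
\begin{equation*}
I_T^2/|L_T| \;\le\; \sum_{\ell\in L_T} |\ell\cap P|^2 \;\ll\; n\,I_T^{1/2}
\end{equation*}
would give $I_T\ll (|L_T|n)^{2/3}\le (mn)^{2/3}$, the full Szemer\'edi--Trotter exponent, which is \emph{false} over general $\F$; the $m^{3/4}n^{5/8}$ bound of part~(i) is strictly weaker than $(mn)^{2/3}$ throughout the stated range $m>n^{1/2}$, so the collinearity term must survive and dictate the exponent. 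Your sketch never shows where it enters, which is the crux of the matter.

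For part~(ii), your ``pivot point plus collinear $4$-tuples'' plan is speculative and does not match what \cite{SZ} actually does. Their reduction of the arbitrary point set to a grid-like configuration goes through a different device (roughly, using intersections of two subfamilies of the line set to manufacture the Cartesian structure needed to invoke part~(i)), and, as the present paper explicitly remarks, the point of \cite{SZ} is that this reduction is done \emph{without} a Balog--Szemer\'edi--Gowers-type pigeonhole on a rich pivot, which your plan would reintroduce. The exponent $11/15$ is a consequence of composing the Cartesian bound with this specific reduction, not of ``counting collinear $4$-tuples and balancing''; as written, your sketch for~(ii) has no mechanism for producing that exponent. In short: the lift in~(i) is correctly set up but the collinearity step and the optimization, which are the heart of the proof, are missing; and the approach to~(ii) is a different route that would need to be fleshed out from scratch before it could be assessed.
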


Incidence theorems have been widely used in arithmetic combinatorics. Elekes \cite{E}  realised that Theorem \ref{t:ST} applies  to sum-product type problems and proved the following estimate towards Conjecture \ref{esc}:
\begin{equation}\label{ele}
\max( |A+A|,\;|A\cdot A| )\gg |A|^{1+\delta},~~\delta=1/4.\end{equation}

Roche-Newton and the first two authors \cite{RRS} applied  Theorem  \ref{t:MR} in a similar vein and proved, for any field $\F$, that
\begin{equation}\label{us}
\max( |A+A|,\;|A\cdot A| )\gg |A|^{1+\delta},~~\delta=1/5.\end{equation}
under an additional constraint $|A|\leq p^{5/8}$ in positive characteristic. 

There has been a series of improvements of the estimate \eqref{ele} in the real and complex case, started by Solymosi \cite{S1}, and currently up to  $\gtrsim |A|^{\frac{4}{3}+\frac{5}{9813}}$ in the right-hand side by Konyagin and the second author \cite{KS2}.  All such improvements of \eqref{ele} (see, e.g., the references in \cite{KS2}) used crucially  the order properties of the reals (the arguments would usually generalise to $\mathbb C$, see for example \cite{KR})  and benefited by repeated applications of the Szemer\'edi-Trotter theorem involving the sets of sums or products themselves. Without order in (a subfield of) $\F$, the sum-product estimate \eqref{us} is the best one known.

\medskip
Our main result is the following theorem, which implies, up to factors of $\log|A|$, the  latter two estimates in the context of Conjecture \ref{esc1}. We establish the following.

\begin{theorem} \label{mainth}
Let $A\subset \F$.There exists $A'\subseteq A$, such that $|A'|\geq  |A|/2$, and
\begin{equation*}
\min [ \E^+(A'),\,\E^\times(A') ] \lesssim |A|^{3-\delta},
\end{equation*}
where $\delta=1/4$ in the special case $\F=\mathbb C$ and $\delta = 1/5$ for any $\F$, with an additional constraint $|A|\leq p^{5/8}$ in positive characteristic.
\end{theorem}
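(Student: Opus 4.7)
The plan is in two stages: first prove an energy--energy sum-product inequality of ``Elekes type" relating $\E^+(A)$ and $\E^\times(A)$ to $|A|$, and then use a popularity pruning to convert this into the desired statement on a large subset $A'$. The Balog--Wooley union-of-AP-and-GP example, for which $\min[\E^+(A),\E^\times(A)]\sim|A|^3$, shows that one cannot simply take $A'=A$; a proper subset is genuinely necessary.

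For the key inequality over $\mathbb C$, I would set up the Elekes point--line configuration in $\F^2$: take the point set $P=(A+A)\times(A\cdot A)$ and the line family $L=\{\ell_{a,b}:\,y=a(x-b),\,a,b\in A\}$, so that each line carries the $|A|$ points $\{(b+c,ac):c\in A\}$. To encode both energies I would weight each point $(s,r)\in P$ by the product of representation functions $r_+(s)\,r_\times(r)$, where $r_+(s)=|\{(u,v)\in A^2:u+v=s\}|$ and $r_\times$ is defined analogously, dyadically pigeonhole to scales $r_+\sim R_+$ and $r_\times\sim R_\times$, and bound the resulting restricted incidence count via Theorem \ref{t:ST}. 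Balancing the two dyadic parameters produces, up to logarithms, an inequality of the form $\E^+(A)^\alpha\cdot\E^\times(A)^\beta\lesssim|A|^\gamma$ with exponents matching Elekes' $5/4$ bound \eqref{ele}. Over a general field the same configuration is lifted to a point--plane incidence problem in $\F^3$ via the standard identification of the line family with a family of planes, and is bounded by Theorem \ref{t:MR} in place of Theorem \ref{t:ST}; this yields the analogous inequality with the weaker Rudnev exponents of \eqref{us}, and the hypothesis $|A|\leq p^{5/8}$ is exactly what keeps the lifted point set within the $n\leq p^2$ regime of Theorem \ref{t:MR}.

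For stage two, the energy-energy inequality from stage one bounds $\min[\E^+(A),\E^\times(A)]$ from above only under the assumption that both energies are simultaneously controlled, which fails on sets with mixed structure such as the Balog--Wooley example. I would therefore run a popularity pruning: for each $a\in A$ measure its contribution to the quadruples counted in the incidence argument of stage one (the product $r_+\cdot r_\times$ summed over the positions in which $a$ appears), and discard those elements whose contribution exceeds twice the average. By Markov this removes at most $|A|/2$ elements, so the remaining $A'$ has $|A'|\geq|A|/2$; it inherits the stage-one inequality with adjusted constants, and since the popularity threshold now forces the balancing between $R_+$ and $R_\times$ to be tight on $A'$, one of the two energies must satisfy $\E^\bullet(A')\lesssim|A|^{3-\delta}$.

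The hardest step is executing the dyadic pigeonhole and the pruning so as to lose no more than logarithmic factors, matching the sharp Elekes exponent $1/4$ (respectively the Rudnev exponent $1/5$) in the final bound. In \cite{BW} the analogous step invoked the Balog--Szemer\'edi--Gowers theorem and bled polynomially into $\delta$; following \cite{KS2}, the plan is to weight the incidences directly to bypass BSG entirely, but squeezing out the sharp exponent requires the weighted incidence bound to be tight at the balanced dyadic scale, which in turn demands careful treatment of the lower-order $|P|$ and $|L|$ terms in Theorems \ref{t:ST} and \ref{t:MR}. A secondary obstacle specific to the general-field case is verifying that the lifted point--plane configuration has no large collinear component, so that the $k$ term in Theorem \ref{t:MR} does not dominate the main estimate.
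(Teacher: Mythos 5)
Your high-level instincts are sound — you correctly identify that $A'=A$ cannot work, that BSG should be avoided, that the incidence theorems (Szemer\'edi--Trotter, resp.\ Theorem \ref{t:MR} with its $n\leq p^2$ constraint) are the engine, and that the $p^{5/8}$ hypothesis should come from that constraint. But both stages of the plan have gaps that I do not think can be patched as written.

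\textbf{Stage 1 cannot deliver what Stage 2 needs.} You assert that a weighted Elekes argument (points $(A+A)\times(A\cdot A)$ weighted by $r_+r_\times$, lines $y=a(x-b)$) produces an inequality $\E^+(A)^\alpha\E^\times(A)^\beta\lesssim|A|^\gamma$ for the \emph{whole} set $A$. No such nontrivial unconditional inequality exists: for $A$ the disjoint union of an AP and a GP of equal size, both energies are $\Theta(|A|^3)$, forcing $3(\alpha+\beta)\leq\gamma$, which is the trivial bound. You recognise the obstruction in Stage 2, but that leaves Stage 1 without a precise statement to prove, and the weighted-incidence count you set up ($\sum_{a,b,c\in A}r_+(b+c)r_\times(ac)$, restricted to a dyadic cell in $R_+,R_\times$) is a mixed quantity that does not visibly dominate or control $\E^+(A)$ or $\E^\times(A)$ separately. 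In the paper the incidence input is applied to a quite different object: after two pigeonholings one finds a popular ratio set $P\subseteq A/A$ carrying $\gtrsim\E^\times(A)/\log$ of the multiplicative energy and a subset $A_1$ of abscissae each representable $\gtrsim q$ times as a ratio in $A/P$; then $\E^+(A_1)$ is controlled by counting solutions to $a+\alpha/p=a'+\alpha'/p'$ via Theorem \ref{t:MR} (or \ref{t:ST}) with a $q^{-2}$ gain for the overrepresentation. That structural step — producing a subset whose elements are \emph{all} multiplicatively rich so that the additive energy count inherits a factor $q^{-2}$ — is what makes the argument go, and it has no analogue in your setup.

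\textbf{Stage 2's pruning does not force one energy down.} Discarding elements whose contribution to the weighted-quadruple count exceeds twice the mean gives a set $A'$ on which all individual contributions are bounded; it does not follow that $\E^+(A')$ or $\E^\times(A')$ is small. In the AP-union-GP example the per-element contributions that you propose to threshold are roughly the same size for AP elements (large $r_+$, small $r_\times$) and GP elements (vice versa), so Markov does not separate them, and a generic half of $A$ still has both energies $\Theta(|A|^3)$. What the paper actually does is an iterative peeling: repeatedly extract subsets $D_j$ with small $\E^+(D_j)$ (via Proposition \ref{l:BW_FF}/\ref{l:BW_R}), stop once the residue $C$ has small $\E^\times(C)$, and then combine the pieces $B=\bigsqcup_j D_j$ using the sub-additivity inequality $\E^+(\bigcup A_i)^{1/4}\leq\sum\E^+(A_i)^{1/4}$ (Lemma \ref{1/4ineq}). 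Without the $1/4$-power sub-additivity, or something playing the same role of controlling $\E^+$ of the union, you cannot conclude; a single Markov truncation does not substitute for this iteration. In short: the proposal is missing both the structured subset $A_1$ (Gap in Stage 1) and the iterative-union argument (Gap in Stage 2), and these are the two essential ingredients of the paper's proof.
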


Theorem \ref{mainth} is an immediate consequence of the forthcoming, and stronger, Theorem \ref{t:BW_R}, which  is an improvement of Theorem \ref{t:BW}.  The values of $\delta$ we establish match those in the estimates \eqref{ele}, \eqref{us}. Thus, our arguments emphasise the geometric (and reduce the additive) combinatorics content of the proof
: passing from the estimates \eqref{ele}, \eqref{us} to their weaker  $L^2$ formulation in Theorem \ref{mainth} only incurs logarithmic factors in $|A|$.

Since these estimates are, of course, partial apropos of Conjecture \ref{esc1}, we have not troubled ourselves with calculating the exact powers of $\log|A|$. On the other hand, we do not expect that, modulo these factors, the estimates in question can be improved within today's state of the art toolkit. Once again, all the improvements of  the Elekes estimate \eqref{ele} in the real/complex case appear  to relate (multiplicative or additive) energy to the size of the counterpart  (respectively product or sum) set  and do not work in the  energy-energy  sense, cf. the title of the breakthrough paper \cite{Sol05} by Solymosi. 

These improvements, in particular were enabled by the idea of the second author and Schoen to use the {\em third,} rather the {\em second} moment, or cubic (and higher order) energy of the convolution function arising in the description of sum or product sets. See, e.g., \cite{SS}. This opportunity is inherent in the numerical values of the exponents arising in the Szemer\'edi-Trotter theorem. It does not appear to be granted by the weaker Theorem \ref{t:MR}.
Nor does it seem to be at hand if one pursues energy-energy estimates. This is why, we believe, Theorem \ref{mainth} marks a certain milestone, and to improve its exponents, which are $\delta=1/4$ for $\mathbb F=\mathbb C$ and $\delta =1/5$ otherwise, one needs a conceptual innovation,  whether this is about the reals or a general  $\F$.

On the technical level we do much better than Theorem \ref{t:BW} by avoiding the use of the Balog-Szemer\'{e}di-Gowers theorem. The latter, presented as Theorem \ref{t:BSzG_Schoen} below,  has been a standard arithmetic combinatorial tool for passing from large energy bounds to subsets with small doubling, in particular Balog and Wooley used it to prove Theorem \ref{t:BW}.  Unfortunately, on the quantitative level applying the Balog-Szemer\'{e}di-Gowers theorem is usually quite wasteful. Konyagin and the second author  found a way to avoid it in the context  of Theorem \ref{t:BW}, where they proved $\delta=1/5$ over the reals \cite{KS2}.  In the proof of the key result of this paper, the forthcoming Theorem \ref{t:BW_R}, we follow the main line of the argument in Section 4 of \cite{KS2}, making it somewhat stronger, which yields what we believe is the strongest result,  within reach of today's machinery, that is $\delta=1/4$ for real and complex numbers.

We develop the argument in the context of a general field $\F$, where we use Theorem \ref{t:MR}, while Theorem  \ref{t:ST} applies to $\F=\mathbb C$ as a special case. This enables us to prove $\delta=1/5$ for a general $\F$, matching its value in the sum-product estimate \eqref{us}, while $\delta=1/4$ for $\F=\mathbb C$, matches the Elekes estimate \eqref{ele}. 

As far as applications are concerned, we are interested  in quantitative arithmetic growth estimates.  By arithmetic growth we mean, for an integer $n\geq 2$, having a function $f: \F^n\to \F$,  such that for any $A\subset \F$, sufficiently small in terms of $p$ in the positive characteristic case, the cardinality of the range of $f$, restricted to $A^n$, exceeds $|A|$ by orders of magnitude. See \cite{AMRS} and the references contained therein for some quantitative estimates for $n=2,3,4$ over general fields and  general discussion.

It appears that our energy method enables one to obtain stronger quantitative growth estimates, for they often result in a relation binding energies of two different types, say $\E^\times(A)$ and $\E^+(A)$ for some putative set $A$. Our results, see the forthcoming Corollary \ref{t:BW_C}, provide  upper bounds for the product of the two energies if one passes to a pair of large subsets. Previously available estimates would bind, say multiplicative energy and the sum set, see e.g., \cite{Sol05}. Passing from  $\E^+(A)$ to the sum set, aiming basically to invert the first inequality in \eqref{CSE} would invoke a quantitatively costly application of the  Balog-Szemer\'{e}di-Gowers theorem. Our method enables one to  bypass this. 

There is a well established connection between arithmetic growth and incidence geometry estimates, in both directions. However exploring this connection from the former towards the latter estimates would invariably invoke the Balog-Szemer\'{e}di-Gowers theorem. We limit the references to the well-known works of Bourgain, Katz and Tao \cite{BKT} and Green and Tao \cite{GT}; see Section 6 in both papers. It turns out, however, that  the Balog-Szemer\'{e}di-Gowers theorem can be avoided. In the context of incidence estimates, namely Theorem \ref{t:SzT_Fp} (ii), this was achieved in \cite{SZ}. In the same vein, we challenge an interested reader to embark on reducing -- and strengthening -- the fairly lengthy proof of Proposition 6.6 in \cite{GT} to a much shorter energy argument, avoiding the Balog-Szemer\'{e}di-Gowers theorem  in the vein of the forthcoming Theorem \ref{t:BW_mult}.

We presently limit the number of applications considered in detail to one, concerning the prime residue field $\F_p$. Given $f: \F^n_p\to \F_p$, what is the lower bound on $|A|$, such that for any  $A\subset \F_p$, the range of $f$, restricted to $A^n$, takes up a positive proportion of the field $\F_p$? For many such $f$ one can relatively easily prove the threshold $|A|=\Omega(p^{2/3}),$ via character sums or often just linear algebra methods that work well for relatively large sets with respect to $p$. See e.g \cite{CEIK}. 
However, these techniques usually fail to work for smaller $A$. To this effect, the challenge is to reduce the threshold $|A|=\Omega(p^{2/3})$ for some $f$.

Petridis \cite{Petridis} proved recently that if $|A| \geq p^{5/8}$, then the cardinalities of the sets
$(A+ A)\cdot (A + A), \,(A+A) / (A+A)$ are both $\Omega(p)$, having incorporated the so-called  ``generic projections''  argument  from \cite{BKT} and  Theorem \ref{t:MR} into a rather involved argument. Generic projections is an easy pigeonholing argument showing  that the sets
$(A-A) / (A-A)$, as well as $\{\frac{ac-bd}{a-d}:\,a,b,c,d\in A,\,a\neq d \}$ are both equal to $\F_p$ as long as $|A| >p^{1/2}$.

As a matter of fact, Petridis establishes a stronger $L^2$ claim that the number of solutions of the equation
$$
(a+b)(c+d) = (a'+b')(c'+d'):\,a,\ldots,d'\in A
$$
is bounded as $O(|A|^8/p)$, that is up to a constant the expected number, as long as $|A|>p^{5/8}$. Such a bound appears to be out of reach by methods of \cite{BKT} even regarding the set $(A-A) / (A-A)$ if one rearranges the latter equation as fractions and replaces the plus signs by minuses.

In this paper we  establish the following.
\begin{theorem}\label{thm:had}
Let $A\subseteq \F_p, $ with $|A|  \gg p^{25/42}\log^K |A|$, for some absolute constant $K$. There are disjoint $B,C\subset A$, each of cardinality $\geq |A|/3$, such that  number of solutions to the equation
\begin{equation} \frac{ab -  c}{a  -  d}=\frac{a'b'  - c'}{a' - d'}:\,a,b,a',b' \in B;\,c,d,c',d' \in C  \label{e:had}\end{equation}
is $O({\aa^8}/{p})$, and
therefore
$\left|\left\{
\frac{ab-c}{a-d}:a,b,c,d\in A
\right\}
\right|=\Omega(p)\,.$
\end{theorem}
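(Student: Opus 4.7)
The plan is to reduce the $\Omega(p)$ conclusion to an upper bound on $T$, the number of solutions of \eqref{e:had}: by Cauchy--Schwarz one has $\left|\left\{(ab-c)/(a-d): a, b, c, d\in A\right\}\right| \geq (|B|^2|C|^2)^2/T$ (where the $(B,C)$-restricted count of 4-tuples is $\gtrsim |A|^4$), so it suffices to prove $T=O(|A|^8/p)$. First I would invoke Theorem \ref{t:BW_R} (the decomposition version of Theorem \ref{mainth}) to find disjoint $B, C \subset A$ with $|B|,|C|\geq |A|/3$ (pruning slightly if the decomposition is unbalanced) and both $\E^\times(B),\E^+(C) \lesssim |A|^{3-1/5}=|A|^{14/5}$. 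This is legitimate because the hypothesis $|A|\gg p^{25/42}\log^K|A|$ comfortably lies below the admissibility threshold $|A|\leq p^{5/8}$ of that theorem in positive characteristic, since $25/42<5/8$.

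Next I would rewrite equation \eqref{e:had} as a linear form in the ``unprimed'' coordinates $(a,d,c)$:
\[
a\pi_1 + d\pi_2 + c\pi_3 = 0,
\]
where $\pi_1 = b(a'-d')-(a'b'-c')$, $\pi_2 = a'b'-c'$, $\pi_3 = d'-a'$ depend only on the 5-tuple $(b,a',b',c',d')\in B^3\times C^2$. This identifies $T$ with the number of incidences between the point set $P=B\times C\times C\subset \F_p^3$, of size $n=|B||C|^2\leq |A|^3$, and the multiset of planes indexed by the 5-tuples. Two 5-tuples determine the same projective plane iff $b$ and the pair $(\pi_2:\pi_3)$ agree, so after dyadic grouping by plane-multiplicity I apply Theorem \ref{t:MR} level by level:
\[
T \;\lesssim\; \sum_{j\geq 0} 2^{j}\, m_j \bigl(n^{1/2}+k\bigr),
\]
where $m_j$ counts distinct planes of multiplicity $\sim 2^j$ and $k$ is the maximum number of collinear points of $P$. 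The Cartesian structure of $P$ yields $k\lesssim|A|$ (axis-parallel lines contain at most $\max(|B|,|C|)$ points, off-axis lines at most $\min(|B|,|C|)$), while the multiplicity of each plane is governed by mixed additive--multiplicative energies that are dominated by the small $\E^\times(B)$ and $\E^+(C)$ from the decomposition step.

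Substituting $\E^\times(B),\E^+(C)\lesssim |A|^{14/5}$ into the resulting incidence bound and optimising over the dyadic parameters $2^j$ produces $T\lesssim |A|^8/p$ precisely in the regime $|A|\gg p^{25/42}\log^K|A|$; the exponent $25/42$ emerges as the breakeven point between the incidence main term and the target $|A|^8/p$, given the $\delta=1/5$ energy saving. Plugging back in the Cauchy--Schwarz inequality yields the $\Omega(p)$ bound on the image, completing the proof.

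The main obstacle will be the careful bookkeeping in the third step: distinct 5-tuples can determine projectively identical planes, and the naive incidence bound loses a power of $|A|$ unless these collisions are absorbed using the smallness of the energies. In spirit the argument follows Petridis's treatment of $(A+A)(A+A)$, the improvement coming from using the energy-decomposed sets $B$ and $C$ (via Theorem \ref{t:BW_R}) in place of the ``coarse'' sum sets, which is what allows one to push the threshold from $p^{5/8}$ down to $p^{25/42}$.
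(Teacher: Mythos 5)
Your proposal has two substantive gaps, one at the decomposition step and one in the incidence-counting step, and both would prevent the argument from closing.

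First, you propose to invoke Theorem \ref{t:BW_R} and ``prune slightly'' to obtain disjoint $B,C\subset A$ with $|B|,|C|\geq|A|/3$ and \emph{individual} bounds $\E^\times(B),\E^+(C)\lesssim|A|^{14/5}$. This cannot work: Theorem \ref{t:BW_R} guarantees the energy bound for the two halves of a decomposition $A=B\sqcup C$, but one of those halves may be empty (for instance if $A$ is an arithmetic progression then $C=A$, $B=\emptyset$ already satisfies the theorem). Pruning a large piece is harmless, but the theorem gives no way to \emph{enlarge} a tiny piece while keeping its energy controlled. The paper flags exactly this obstruction (``one cannot expect both $B$ and $C$ in Theorem \ref{t:BW_R} to constitute a positive proportion of $A$'') and therefore uses Corollary \ref{t:BW_C}, which trades the individual bounds for the weaker product inequality $\E^{+}(B)\cdot\E^\times(C)^{3/2}\lesssim|A|^{7}$ in exchange for the cardinality guarantee $|B|,|C|\geq|A|/3$. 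That weaker inequality, with its exponents $1$ and $3/2$, is what dictates the choice of H\"older exponents $5/3$ and $5/2$ later in the argument.

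Second, and more seriously, your direct encoding of $T$ as an incidence count between points $B\times C\times C$ and a multiset of planes, followed by an application of Theorem \ref{t:MR}, cannot produce the target bound $T=O(|A|^8/p)$: Theorem \ref{t:MR} has no $p$ in its main term $m n^{1/2}$, and no amount of dyadic bookkeeping of plane multiplicities can conjure up a factor of $1/p$. The $p^{-1}$ in the final bound comes from a genuinely different mechanism. The paper first rewrites $\mathcal E=\sum_{x\neq 0}\E^\times(B,x+B)\E^+(C,xC)$, then isolates the ``expected'' level $|A|^4/p$ of each energy (the lower bound $\E^+(A,xA)\geq|A|^4/p$ from Cauchy--Schwarz), and then shows that the total \emph{excess} above this level is small, via Lemma \ref{ensum} (generic projections) and the moment estimate of Proposition \ref{p:had}. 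That proposition is proved by a level-set decomposition on $\E^+(A,xA)$ combined with the Petridis-type incidence bound Lemma \ref{inccount}, which is indeed based on Theorem \ref{t:MR} but applied to a carefully chosen family of planes in $\F_p^3$, not to your plane family. Your ``the multiplicity of each plane is governed by\dots$\E^\times(B)$ and $\E^+(C)$'' step glosses over precisely this entire second-moment argument, and ``optimising over the dyadic parameters\dots produces $T\lesssim|A|^8/p$'' does not follow from the incidence count you have set up.

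The Cauchy--Schwarz reduction at the very beginning is fine (it is essentially \eqref{e:hadcs}, with $\sum_x r(x)=|B|^2|C|^2$ and $\sum_x r(x)^2=T$), as is the use of $Q\leq p$; the problems are downstream of that.
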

The reader can verify that in the latter theorem all the minus signs can be replaced by plus signs as well.

\subsection{Further results}
Here we present a somewhat stronger formulation of Theorem \ref{mainth}, its analogue for multiplicative energies of additive shifts and the energy version of the Elekes-Ruzsa few sums, many products inequality. The latter is available only in the real/complex setting; over the reals it yields a minor improvement of the best known sum-product exponent, after being plugged into the argument recently developed by Konyagin and the second author. We also provide an auxiliary subsection which contains a suitably tailored version of the Balog-Szemer\'edi-Gowers theorem -- which may be interesting in its own right -- and some indication of what our results would look like if the Balog-Szemer\'edi-Gowers theorem had to be used.

\subsubsection{Sum-product decomposition and energy inequalities}
\begin{theorem}[Balog-Wooley decomposition]\label{t:BW_R}
Let $A\subset \F$.  There exist two disjoint subsets $B$ and $C$ of $A$, such that   $A=B\sqcup C$, and
\begin{equation*}\max [ \E^+(B),\,\E^\times(C) ] \lesssim |A|^{3-\delta},
\end{equation*}
where $\delta=1/4$ in the special case $\F=\mathbb C$ and $\delta = 1/5$ for any $\F$, with an additional constraint $|A|\leq p^{5/8}$ in positive characteristic.
\end{theorem}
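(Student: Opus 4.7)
Following Section 4 of \cite{KS2}, my plan avoids the Balog-Szemer\'edi-Gowers theorem by partitioning $A$ according to a per-element additive-richness statistic. For each $a\in A$ introduce
$$
\sigma^{+}(a):=|\{(b,c,d)\in A^{3}:a+b=c+d\}|,\qquad
\sigma^{\times}(a):=|\{(b,c,d)\in A^{3}:ab=cd\}|,
$$
so that $\E^{+}(A)=\sum_{a\in A}\sigma^{+}(a)$ and $\E^{\times}(A)=\sum_{a\in A}\sigma^{\times}(a)$. Given a threshold $\Delta>0$ I would set $B:=\{a\in A:\sigma^{+}(a)\leq\Delta\}$ and $C:=A\setminus B$, producing a partition $A=B\sqcup C$. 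For the $B$-half the bound is automatic: writing $\sigma^{+}_{B}(b):=|\{(b',c',d')\in B^{3}:b+b'=c'+d'\}|\leq\sigma^{+}(b)$, one has $\E^{+}(B)=\sum_{b\in B}\sigma^{+}_{B}(b)\leq|A|\,\Delta$. Analogously, $\E^{\times}(C)\leq\sum_{c\in C}\sigma^{\times}(c)$, and the real task is to bound this last sum using only the information $\sigma^{+}(c)>\Delta$ for every $c\in C$.

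The crux is a weighted energy-type inequality of the schematic form
$$
\sum_{a\in A}\sigma^{+}(a)^{\kappa}\,\sigma^{\times}(a)\;\lesssim\;|A|^{\lambda}
$$
for suitable exponents $\kappa>0$ and $\lambda$. Markov's inequality applied to $c\in C$ then delivers
$\sum_{c\in C}\sigma^{\times}(c)\leq\Delta^{-\kappa}\sum_{a}\sigma^{+}(a)^{\kappa}\sigma^{\times}(a)\lesssim|A|^{\lambda}/\Delta^{\kappa}.$
The weighted inequality itself I would prove geometrically: each solution of $ab=cd$ encodes two points $(a,c),(d,b)\in A\times A$ collinear with the origin, so $\sigma^{\times}(a)$ counts pairs of such points on the line of slope $c/a=b/d$. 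Weighting each row-parameter $a$ by $\sigma^{+}(a)^{\kappa}$ and feeding the resulting weighted configuration into the Szemer\'edi-Trotter theorem (Theorem \ref{t:ST}) in the case $\F=\mathbb{C}$, or into the point-plane bound of Theorem \ref{t:MR} for a general field, yields the required $(\kappa,\lambda)$. The two theorems have different exponent profiles, producing $\delta=1/4$ in the complex case and $\delta=1/5$ for a general $\F$; the small-set constraint $|A|\leq p^{5/8}$ in positive characteristic is inherited from Theorem \ref{t:MR}.

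With $\E^{+}(B)\leq|A|\Delta$ and $\E^{\times}(C)\lesssim|A|^{\lambda}/\Delta^{\kappa}$ in hand, the choice $\Delta=|A|^{(\lambda-1)/(1+\kappa)}$ equalizes the two estimates and gives $\max[\E^{+}(B),\E^{\times}(C)]\lesssim|A|^{(\lambda+\kappa)/(1+\kappa)}$; the exponents extracted from the incidence step are engineered so that this final exponent is exactly $3-\delta$. Logarithmic factors in $|A|$ are absorbed by the usual dyadic pigeonholing over the level sets of $\sigma^{+}$, which is what makes the threshold $\Delta$ a clean cut rather than merely an average. The main obstacle is the weighted incidence inequality of the second paragraph: the naive strategy of isolating a dyadic level set of $\sigma^{+}$, applying Balog-Szemer\'edi-Gowers to extract a small-doubling subset of it, and then applying the incidence theorem to the small-doubling subset, would lose polynomial factors and wipe out the target exponent. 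The \cite{KS2} innovation, which I would adapt here, keeps all of $A$ in play and threads the $\sigma^{+}(a)^{\kappa}$-weights into a single global incidence count. Porting this from Theorem \ref{t:ST} to Theorem \ref{t:MR} requires the additional step of recasting the combined additive-multiplicative configuration as a three-dimensional point-plane incidence problem, in the spirit of \cite{RRS}, which is where the weaker $\delta=1/5$ and the constraint $|A|\leq p^{5/8}$ enter.
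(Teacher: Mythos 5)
Your plan replaces the paper's iterative extraction with a one-shot threshold cut $B=\{a:\sigma^{+}(a)\le\Delta\}$, and hinges everything on a mixed seventh-moment inequality
\[
\sum_{a\in A}\sigma^{+}(a)^{\kappa}\,\sigma^{\times}(a)\ \lesssim\ |A|^{\lambda},
\]
which you state only ``schematically'' and never prove -- in fact you never even specify $\kappa$ or $\lambda$. This is a genuine gap, and it is the whole content of the theorem. For $\delta=1/4$ your own algebra requires $(\kappa+\lambda)/(1+\kappa)=11/4$; at $\kappa=1$ this is $\lambda=9/2$, i.e. a bound $|A|^{9/2}$ on the number of septuples $(a,b,c,d,e,f,g)\in A^{7}$ with $a+b=c+d$ and $ae=fg$. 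The trivial estimate $\max_{a}\sigma^{+}(a)\cdot\E^{\times}(A)\le|A|^{5}$ (or H\"older, or Cauchy--Schwarz through third energies) gives only $|A|^{5}$, so you need to save a full $|A|^{1/2}$, and ``weighting each row-parameter $a$ by $\sigma^{+}(a)^{\kappa}$ and feeding the resulting weighted configuration into Szemer\'edi--Trotter'' is not a recognisable incidence argument: Theorems \ref{t:ST} and \ref{t:MR} count unweighted incidences, and it is not explained how the additive weight $\sigma^{+}(a)$ and the multiplicative count $\sigma^{\times}(a)$ are to be encoded simultaneously in a single point-line or point-plane configuration. No such bound is in the literature, and I see no reason to believe it is reachable by a direct application of these incidence theorems.

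The paper's route is structurally different and sidesteps exactly this difficulty. Rather than cutting by a per-element statistic, Propositions \ref{l:BW_FF} and \ref{l:BW_R} extract a subset $A_{1}\subseteq A$ by two rounds of dyadic pigeonholing: first a popular ratio level set $P\subseteq A/A$ carrying $\gtrsim\E^{\times}(A)/\log|A|$ of the multiplicative energy, then a set $A_{1}$ of popular abscissae (or ordinates) of the corresponding point set $S\subseteq A\times A$. The crucial feature is that every $a\in A_{1}$ has $\approx q$ representations as $\alpha/p_{*}$ with $\alpha\in A$, $p_{*}\in P$, and this $q$-fold amplification converts $\E^{+}(A_{1})$ into a single point-plane (resp. point-line) incidence count, producing a bound on $\E^{+}(A_{1})$ that decreases in $\E^{\times}(A)$ -- see \eqref{f:ksdf}. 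The Balog--Wooley decomposition is then reached by iterating: repeatedly strip off such $A_{1}=D_{j}$ from the remainder $C_{j}$ as long as $\E^{\times}(C_{j})$ is large, and sum the $\E^{+}(D_{j})$ via the sub-additivity Lemma \ref{1/4ineq}. The iteration is unavoidable because $A_{1}$ may be a small fraction of $A$; your single-threshold scheme packages the whole difficulty into an unproved moment inequality instead. I'd recommend proving something along the lines of Proposition \ref{l:BW_FF} as the base step and then iterating, as in the paper; or, if you want to pursue the moment inequality, you need to actually derive it, with explicit $\kappa,\lambda$ and a concrete reduction to Theorem \ref{t:ST} or Theorem \ref{t:MR}.
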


Theorem \ref{t:BW_R} clearly implies Theorem \ref{mainth}: one of $B,C$ has size $\geq|A|/2$.

\medskip
As we have mentioned, one cannot expect both $B$ and $C$ in Theorem \ref{t:BW_R} to constitute a positive proportion of $A$. But this can be achieved by weakening the claim as follows, to be used in the proof of Theorem \ref{thm:had}.
\begin{corollary}\label{t:BW_C}
Let $A\subset \F$, with an additional constraint $|A|\leq p^{3/5}$ in positive characteristic.  There exist two disjoint subsets $B$ and $C$ of $A$, each of cardinality $\geq |A|/3$, such that
\begin{equation}\label{formula1}
 \E^{+} (B)  \cdot  \E^\times(C)^{3/2}  \; \lesssim \;|A|^7.
\end{equation}
In the latter estimate the additive and multiplicative energy can be swapped (for some other $B,C$). Besides, there exist two disjoint subsets $B$ and $C$ of $A$, each of cardinality $\Omega( |A|)$, such that
\begin{equation} \label{formula2} \E^+(B)\cdot \E^\times(C)  \, \lesssim \, |A|^{28/5}.
\end{equation}
Furthermore, if $\mathbb F=\mathbb C$, the estimate \eqref{formula1} improves to 
\begin{equation}\label{formula3}
 \E^{+} (B)  \cdot  \E^\times(C)  \; \lesssim \;|A|^{11/2}.
\end{equation}
\end{corollary}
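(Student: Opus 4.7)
The plan is to deduce the corollary from Theorem \ref{t:BW_R} by a bounded-depth iteration. Write $\delta = 1/4$ if $\F = \mathbb{C}$ and $\delta = 1/5$ otherwise; Theorem \ref{t:BW_R} applied to any subset $S \subseteq A$ yields a partition $S = B' \sqcup C'$ with $\max[\E^+(B'), \E^\times(C')] \lesssim |A|^{3-\delta}$. If the initial decomposition $A = B_0 \sqcup C_0$ already satisfies $|B_0|, |C_0| \geq |A|/3$, all three claimed inequalities follow at once by multiplying the two energy bounds: in general,
\[
\E^+(B_0) \cdot \E^\times(C_0)^{3/2} \lesssim |A|^{(3-\delta) + \frac{3}{2}(3-\delta)} = |A|^{\frac{5(3-\delta)}{2}} = |A|^7,
\]
while $\E^+(B_0) \cdot \E^\times(C_0) \leq |A|^{2(3-\delta)}$ equals $|A|^{28/5}$ for general $\F$ and $|A|^{11/2}$ for $\F = \mathbb C$.

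Otherwise, assume without loss of generality $|C_0| < |A|/3$, so that $|B_0| > 2|A|/3$. Iteratively apply Theorem \ref{t:BW_R} to the residual additively-structured piece, $B_k = B_{k+1} \sqcup C_{k+1}$, accumulating the multiplicative parts as $\tilde C_k := C_0 \cup C_1 \cup \ldots \cup C_k$. The union inequality
\[
\E^\times(X \cup Y) \leq 4\bigl(\E^\times(X) + \E^\times(Y) + 2\sqrt{\E^\times(X)\,\E^\times(Y)}\bigr),
\]
which follows from Cauchy--Schwarz applied to the cross-energy $\E^\times(X,Y)$, then ensures $\E^\times(\tilde C_k) \lesssim |A|^{3-\delta}$ for any absolutely bounded $k$. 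Terminate at the first step where $|B_k| \leq 2|A|/3$; if also $|B_k| \geq |A|/3$ at this step, then $B := B_k$ and $C := \tilde C_k$ give the balanced pair.

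The ``overshoot'' scenario, where $|B_{k-1}| > 2|A|/3$ but $|B_k| < |A|/3$, forces $|C_k| > |A|/3$. Here $\tilde C_k = A \setminus B_k$ has size $> 2|A|/3$ and $\E^\times(\tilde C_k) \lesssim |A|^{3-\delta}$, so a further application of Theorem \ref{t:BW_R} to $\tilde C_k$ produces $\tilde C_k = P \sqcup Q$ with $\E^+(P), \E^\times(Q) \lesssim |A|^{3-\delta}$. The monotonicity of $\E^\times$ under set-inclusion also yields $\E^\times(P) \leq \E^\times(\tilde C_k) \lesssim |A|^{3-\delta}$, making $P$ ``doubly small.'' One then assembles $B := B_k \cup P_B$ and $C := Q \cup P_C$ for an arbitrary cardinality-balancing partition $P = P_B \sqcup P_C$; a short case analysis on whether $|Q| \leq 2|A|/3$ shows the balance $|B|, |C| \geq |A|/3$ is always reachable, possibly after one more subdivision of $Q$ by Theorem \ref{t:BW_R} in the degenerate sub-case $|Q| > 2|A|/3$. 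The weaker estimate \eqref{formula2}, asking only for $|B|, |C| = \Omega(|A|)$, follows by the same method with a looser stopping criterion; the swapped-energy version of \eqref{formula1} is obtained by the symmetric construction starting from a multiplicatively-structured side.

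The principal obstacle is the detailed cardinality bookkeeping in this rebalancing argument. The resolution is that the procedure terminates after an absolutely bounded number of applications of Theorem \ref{t:BW_R}, so the accumulated multiplicative constants and logarithmic factors in the union and monotonicity bounds for energy are harmlessly absorbed by the $\lesssim$ notation.
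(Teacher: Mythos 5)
There is a genuine gap. Your plan implicitly tries to establish a \emph{stronger} statement than the corollary claims, namely that one can always find a balanced partition $A = B \sqcup C$ with $|B|,|C| \geq |A|/3$ and \emph{each} of $\E^+(B) \lesssim |A|^{3-\delta}$, $\E^\times(C) \lesssim |A|^{3-\delta}$ individually. This is false: if $A$ is a geometric progression then every $C\subseteq A$ of size $\geq |A|/3$ has $\E^\times(C) \gg |A|^3$, so no balanced partition has a multiplicatively-quiet half. The estimates \eqref{formula1}--\eqref{formula3} are genuine \emph{product} inequalities in which neither factor need be small by itself (formula \eqref{formula1} holds for a geometric progression only because $\E^+(B)\lesssim |A|^2$ is tiny, compensating for $\E^\times(C)\approx |A|^3$). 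Because of this, your iteration cannot terminate: on a geometric progression, Theorem \ref{t:BW_R} (used as a black box) may return $B_0=A$, $C_0=\emptyset$ at every step, since $\E^+(A)$ is already small. Nothing in the \emph{statement} of Theorem \ref{t:BW_R} controls step sizes or prevents empty pieces, so neither the ``absolutely bounded $k$'' claim nor the ``overshoot'' rebalancing is justified; and in the overshoot scenario the only bound available on the multiplicative energy of the reassembled $C$ is the trivial one, which gives $|A|^{(3-\delta)+9/2}$ (i.e.\ $|A|^{7.3}$ for $\delta=1/5$, $|A|^{5.75}$ for $\delta=1/4$), missing the targets $|A|^7$ and $|A|^{11/2}$.

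The paper resolves this by \emph{not} using Theorem \ref{t:BW_R} as a black box, but rather re-entering the iteration from its proof (Propositions \ref{l:BW_FF}, \ref{l:BW_R}). Two facts lost at the level of Theorem \ref{t:BW_R}'s statement are then available: the lower bound \eqref{hm} on $|D_j|$ guarantees progress (and $|D_j| < |A|/100$ after further subdivision prevents overshoot), and monotonicity $\E^\times(C_{k+1}) \leq \E^\times(C_j)$ from $C_{j+1}\subseteq C_j$ lets one carry the factor $\E^\times(C_j)^{-3/2}$ from \eqref{f:ksdf} all the way down to $\E^\times(C)^{-3/2}$ in the summation \eqref{formula}, yielding the product bound directly. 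A separate trivial case handles $\E^\times(C) < |A|^{8/3}$. Moreover, \eqref{formula2} is not obtained by squaring a single decomposition: the paper applies the swapped version of \eqref{formula1} to the set $B$ itself, multiplies the two resulting inequalities, and then chooses the smaller of the two products — an argument your proposal does not reproduce.
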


\bigskip
We will spell out the proof of Theorem \ref{t:BW_R}, the key quantitative result, in all detail. This proof, furthermore, allows for a number of straightforward variations, which result from the fact established in the quoted literature. Some of these variations are left without detailed proofs, for they would  repeat the main arguments more or less line by line.

\subsubsection{Multiplicative energy of translates}

In \cite{Shkredov_R[A]} the second author considers a slightly more general context than usual sum--products setting.
The proof of Theorem \ref{t:BW_R} combined with the arguments of \cite{KS2}, enables one to establish a variant of Theorem~\ref{t:BW_R} as follows.

\begin{theorem}
    Let $A\subset \mathbb C$ be a  set, $\a \neq 0$, and $\d = 1/4$.
    Then there are two disjoint subsets $B$ and $C$ of $A$ such that $A = B\sqcup C$ and
\begin{equation}\label{f:BW_mult1}
    \max\{ \E^{\times} (B), \E^{\times} (\a+C)\} \lesssim  |A|^{3-\d} \,.
\end{equation}
    Further, there are disjoint subsets $B'$ and $C'$ of $A$ such that $A = B'\sqcup C'$ and
\begin{equation}\label{f:BW_mult2}
    \max\{ \E^{+} (B'), \E^{+} (1/ C')\} \lesssim  |A|^{3-\d} \,.
\end{equation}
\label{t:BW_mult}
\end{theorem}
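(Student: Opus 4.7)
The strategy is to mirror the forthcoming proof of Theorem \ref{t:BW_R}, replacing the ordinary-multiplicative/additive dichotomy with an ordinary-multiplicative/$\alpha$-shifted-multiplicative one, and importing the multiplicative energy of translates arguments from \cite{KS2}. The essential geometric observation is that $\E^{\times}(\alpha+C)$ admits exactly the same incidence-geometric description as ordinary multiplicative energy, only with a different pencil basepoint: it counts ordered pairs of points of $C\times C$ lying on a common line through the fixed point $(-\alpha,-\alpha)$, since collinearity through $(-\alpha,-\alpha)$ translates algebraically to the relation $(\alpha+a_1)(\alpha+b_2)=(\alpha+a_2)(\alpha+b_1)$. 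Because the Szemer\'edi-Trotter bound in Theorem \ref{t:ST} is insensitive to the pencil's basepoint, both energies are controlled with identical quantitative strength.

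Concretely, I would introduce the shifted multiplicity function $n^{\alpha}_r(X):=|\{(a,b)\in X^2:(\alpha+a)/(\alpha+b)=r\}|$, so that $\E^{\times}(\alpha+X)=\sum_r n^{\alpha}_r(X)^2$, and run the pruning iteration of \cite{KS2} Section 4 alongside its ordinary-energy counterpart. Starting from $A_0=A$, at each stage I dyadically decompose the ratio spectra of both $n^0_r$ and $n^{\alpha}_r$ on the surviving set $A_k$, identify a ``popular'' subset where one of the two energies is disproportionately concentrated, and peel it off into $B$ or $C$ according to which energy is in excess. The Szemer\'edi-Trotter bound controls the size of each such popular subset uniformly, the iteration terminates in $O(\log|A|)$ steps, and at the end $A_{k^\ast}$ is empty while the accumulated pieces satisfy $\max\{\E^{\times}(B),\E^{\times}(\alpha+C)\}\lesssim |A|^{11/4}$.

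For \eqref{f:BW_mult2} the same scheme applies after the inversion $x\mapsto 1/x$: the level sets $\{1/u+1/v=s\}$ are the conics $u+v=suv$ in the original coordinates, and under $(U,V)=(1/u,1/v)$ these become the affine lines $U+V=s$. Consequently $\E^{+}(1/C')$ is again a line-grid incidence count, on $(1/C')^2$, to which Theorem \ref{t:ST} applies with the same exponent as for ordinary additive energy. Running the alternating pruning with the pair $(\E^{+}(\cdot),\E^{+}(1/\cdot))$ in place of $(\E^{\times}(\cdot),\E^{\times}(\alpha+\cdot))$ yields the analogous partition.

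The chief technical obstacle is verifying that the shifted incidence configurations do not introduce spurious degeneracies: the singular ratio $r=0$ arising from $a=-\alpha$ (or $a=0$ in the inverted setting) could in principle produce pathological contributions, but since $0\notin A$ is a standing hypothesis and $-\alpha$ is at most a single element of $A$, such degeneracies contribute at most $O(|A|^2)$ quadruples to any energy count and are absorbed into the logarithmic error term. Once this is checked, the remainder of the argument is a line-by-line transcription of the proof of Theorem \ref{t:BW_R}, with no degradation of the exponent beyond the unavoidable $\log|A|$ factors.
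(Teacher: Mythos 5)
Your high-level strategy is correct--parallel the proof of Theorem~\ref{t:BW_R}, replacing the bound on $\E^+(A_1)$ by one on $\E^\times(\alpha+A_1)$ using \cite{KS2}--and this is indeed what the paper does. But two parts of your sketch misdescribe the mechanism in ways that would derail an actual write-up.

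First, the peeling logic is stated backwards. The correct mechanism (as in the proof of Theorem~\ref{t:BW_R}): as long as $\E^\times(C_j)$ is still above threshold, the adapted Proposition~\ref{l:BW_R} produces a \emph{large} subset $D_j\subseteq C_j$ for which $\E^\times(\alpha+D_j)$ is \emph{provably small}; $D_j$ is peeled into $C$, and once $\E^\times(C_j)$ drops below threshold the remainder becomes $B$. Your phrase ``peel it off into $B$ or $C$ according to which energy is in excess'' reads as if one identifies a piece carrying a lot of energy and dumps it into the bucket named after that energy, which would make the corresponding energy large, not small. The point of the key proposition is precisely the conversion: large $\E^\times(C_j)$ forces the existence of a piece whose \emph{other} (shifted) energy is small. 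You also omit the reassembly step: Lemma~\ref{1/4ineq}, applied to the translated pieces $\alpha+D_j$, is what bounds $\E^\times(\alpha+C)=\E^\times(\bigcup_j(\alpha+D_j))$.

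Second, the pencil-basepoint/conic reframing is not the incidence argument that is actually applied, and one cannot ``relocate the pencil'' for free. The technical step, as spelled out in the proof of Proposition~\ref{l:BW_R} and in the footnote attached to Theorem~\ref{t:BW_mult_Fp}, is: pigeonhole on $\E^\times(C_j)$ to extract a popular ratio set $P\subseteq C_j/C_j$ and a subset $A_1$ each of whose members is representable $\gtrsim q$ ways as $\alpha'/p$ with $\alpha'\in C_j$, $p\in P$ (the $\mathrm{Sym}_q$ device of \cite{KS1,KS2}); then bound $\E^\times(\alpha+A_1)$ by substituting two of the four variables in $(\alpha+a)(\alpha+b')=(\alpha+a')(\alpha+b)$ as such ratios and counting with Szemer\'edi--Trotter (or with Theorem~\ref{t:MR}, as in Proposition~2 of \cite{AMRS}). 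What is being counted is the collision equation after the $\mathrm{Sym}_q$-substitution, not incidences with a pencil of lines through $(-\alpha,-\alpha)$. The same goes for \eqref{f:BW_mult2}: after writing $a,a'$ as popular differences from $P\subseteq A-A$, the equation for $\E^+(1/A_1)$ involves terms like $1/(\alpha_1-p_1)$, and the curves appearing are hyperbolae, not lines in inverted coordinates; this is fine over $\mathbb{C}$ via the relevant Szemer\'edi--Trotter variant, but is precisely why the paper flags the general-field version of \eqref{f:BW_mult2} as requiring hyperbola incidence bounds rather than Theorem~\ref{t:MR}.
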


We will further  present a proof of the following  consequence of Theorem~\ref{t:BW_mult}, which improves a result from \cite{s_E_k}.

\begin{corollary}
    Let $A\subset \mathbb C$ be a  set, and let
$$
    R[A] := \left\{ \frac{a_1-a}{a_2-a} ~:~ a,a_1,a_2 \in A,\, a_2 \neq a \right\} \,.
$$
    Then there are two sets $R', R'' \subseteq R[A]$, $|R'|, |R''| \ge |R[A]|/2$
    such that $\E^\times (R') \lesssim |R'|^{3-1/4}$ and $\E^+ (R'') \lesssim |R''|^{3-1/4}$.
\label{c:R_energy}
\end{corollary}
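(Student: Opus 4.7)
The plan centers on applying Theorem~\ref{t:BW_mult} to the set $R[A]$ itself, exploiting the crucial structural property that $R[A]\setminus\{0\}$ is invariant under the inversion $x\mapsto 1/x$. Indeed, writing $x=(a_1-a)/(a_2-a)\in R[A]$ with $x\ne 0$ (so that $a_1\ne a$) gives $1/x=(a_2-a)/(a_1-a)\in R[A]$, merely by swapping the roles of $a_1$ and $a_2$. This multiplicative symmetry is precisely what part~(2) of Theorem~\ref{t:BW_mult} is tailored to exploit.

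For the additive-energy piece $R''$, I would apply Theorem~\ref{t:BW_mult}~(2) directly to $R[A]$, obtaining a partition $R[A]=B'\sqcup C'$ with $\max\{\E^+(B'),\,\E^+(1/C')\}\lesssim|R[A]|^{3-1/4}$. By the pigeonhole principle one of $B'$ or $C'$ has cardinality at least $|R[A]|/2$. In the former case I set $R'':=B'$; in the latter, the inversion closure yields $1/(C'\setminus\{0\})\subseteq R[A]$ and I set $R'':=1/(C'\setminus\{0\})$. Either way $|R''|\ge|R[A]|/2$ up to discarding at most one singleton, and $\E^+(R'')\lesssim|R[A]|^{3-1/4}\lesssim|R''|^{3-1/4}$, as required.

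For the multiplicative-energy piece $R'$, the analogous strategy is more delicate because $R[A]$ has no useful translation invariance. I would apply Theorem~\ref{t:BW_mult}~(1) to the translated set $R[A]-\alpha$ (with parameter $\alpha\ne 0$), producing $R[A]-\alpha=B\sqcup C$ with $\max\{\E^\times(B),\,\E^\times(\alpha+C)\}\lesssim|R[A]|^{3-1/4}$. Crucially, $\alpha+C\subseteq R[A]$ automatically, so if $|C|\ge|R[A]|/2$ then $R':=\alpha+C$ succeeds. The remaining case $|B|\ge|R[A]|/2$ is problematic, since the small-multiplicative-energy set $B$ lives in $R[A]-\alpha$, not in $R[A]$. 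To cover it I would apply Theorem~\ref{t:BW_mult}~(1) a second time, now to $R[A]$ itself with parameter $-\alpha$; the ``$B$''-piece of this second decomposition now automatically sits in $R[A]$, and a pigeonhole argument over the two parallel decompositions (possibly coupled with a suitable choice or averaging of $\alpha$) is designed to guarantee that at least one of the two applications delivers a subset of $R[A]$ of size $\ge|R[A]|/2$ with the required bound on $\E^\times$.

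The main obstacle is precisely this reconciliation of the two applications in the multiplicative case: both could in principle land simultaneously in their respective ``bad'' configurations, and ruling this out cleanly is the crux of the argument. Should this obstacle resist a fully direct resolution, a robust fallback is to invoke Theorem~\ref{t:BW_R} in place of the second application: its mixed additive/multiplicative decomposition directly furnishes a large subset of $R[A]$ with small $\E^\times$ whenever the multiplicative piece is the larger one, while an iteration on the ``$+$''-piece (absorbing only logarithmic losses via the telescoping $\E^\times(\bigcup_i C_i)\le(k+1)\sum_i\E^\times(C_i)$) handles the complementary case.
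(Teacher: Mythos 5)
Your treatment of the additive-energy piece $R''$ is exactly the paper's: you use $(R^*)^{-1}=R^*$ and part (2) of Theorem~\ref{t:BW_mult}, and that is fine. The gap is in the multiplicative piece $R'$, and it stems from missing a second structural symmetry of the set: not only is $R[A]^*$ closed under inversion, it also satisfies the reflection identity $1-R[A]=R[A]$ (write $x=(a_1-a)/(a_2-a)$; then $1-x=(a_2-a_1)/(a_2-a)=(a_1-a_2)/(a-a_2)\in R[A]$). With this in hand there is no need to translate the set or to juggle two parallel decompositions. One applies Theorem~\ref{t:BW_mult}~(1) to $R=R[A]$ itself with $\alpha=-1$, getting $R=B\sqcup C$ and $\max\{\E^\times(B),\E^\times(C-1)\}\lesssim|R|^{3-1/4}$. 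If $|B|\ge|R|/2$, take $R'=B$. Otherwise $|C|\ge|R|/2$; since $\E^\times$ is invariant under $x\mapsto -x$, $\E^\times(1-C)=\E^\times(C-1)\lesssim|R|^{3-1/4}$, and $1-C\subseteq 1-R=R$ with $|1-C|=|C|\ge|R|/2$, so $R'=1-C$ works. This is precisely the step whose absence forces you into the two-application scheme; you correctly flag that scheme as unresolved, and indeed both applications can land in the ``bad'' case simultaneously with nothing to rule it out.

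Your proposed fallback via Theorem~\ref{t:BW_R} also does not close the gap: that theorem produces $R=B\sqcup C$ with $\E^+(B)$ and $\E^\times(C)$ both small, but gives no control on $\E^\times(B)$. If $|B|\ge|R|/2$ you are left with a large piece of small \emph{additive} energy, which is not what $R'$ requires, and iterating the decomposition on $B$ only shrinks the cardinality without any mechanism to bring it back up to $|R|/2$. The claimed telescoping bound $\E^\times(\bigcup_i C_i)\le(k+1)\sum_i\E^\times(C_i)$ would in any case need to be the $1/4$-power inequality of Lemma~\ref{1/4ineq} to be useful here, and even so the union of the $C_i$'s has no reason to be as large as $|R|/2$. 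The reflection identity is the essential missing ingredient.
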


Note that the set $R[A]$ is the set of finite pinned cross-ratios, generated by the projective set $A\cup\{\infty\}$, defined by quadruples $(a,a_1,a_2,\infty)$.

\medskip 
Naturally, an analogue of Theorem \ref{t:BW_mult} over a general field $\F$, following from the proof of Theorem~\ref{t:BW_R} also exists. It is established by combining the arguments of the proof of Theorem \ref{t:BW_R} and  the proof of Proposition 2 in \cite{KS2}\footnote{The proof  merely requires replacing the equation \eqref{trick} in the proof of the forthcoming  Proposition \ref{l:BW_FF}  by the equation corresponding to the energy $\E^\times(\a+A_1)$, rather than $\E^+(A_1)$, where $A_1\subseteq A$ is constructed throughout the proof of Proposition \ref{l:BW_FF}. 
One proceeds by applying Theorem \ref{t:MR} to the latter equation in essentially the same way it is done in the proof of Proposition \ref{l:BW_FF}; the actual application of the theorem can be copied from the proof of Proposition 2 of \cite{AMRS}. }.

\begin{theorem}
  Let $A\subset \F$. If $\F$ has positive characteristic $p$, suppose $|A|\leq p^{5/8}.$  Let $\a \in \F^*$
   and $\d =1/5$.
    Then there are two disjoint subsets $B$ and $C$ of $A$ such that $A = B\sqcup C$ and
$$
    \max\{ \E^{\times} (B), \E^{\times} (\a+C)\} \lesssim  |A|^{3-\d} \,.
$$
\label{t:BW_mult_Fp}
\end{theorem}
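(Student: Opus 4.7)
My plan is to follow the proof of Theorem \ref{t:BW_R} in its general-field branch (via the still-to-come Proposition \ref{l:BW_FF}, yielding $\d = 1/5$) essentially verbatim, implementing the single substitution flagged in the paper's own footnote: wherever the existing proof analyses the additive energy $\E^+(A_1)$ of an auxiliary extracted subset $A_1 \subseteq A$, we instead analyse the multiplicative energy of the shifted set, $\E^\times(\a + A_1)$. The outer decomposition scheme---a dyadic extraction that iteratively partitions $A$ into a ``bad-for-$B$" piece and a ``bad-for-$C$" piece, according to which of the two target energies is inflated by including the piece---is identical in structure. What changes is the meaning of ``bad" on the $C$-side: heaviness is now detected by multiplicative popularity in $\a + A$ rather than by additive popularity in $A$.

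The crux is a single incidence reduction. Solutions to
$$(\a + a_1)(\a + a_2) = (\a + a_3)(\a + a_4), \qquad a_i \in A_1,$$
rearrange as $a_1 a_2 - a_3 a_4 = \a(a_3 + a_4 - a_1 - a_2)$ and hence, for each fixed $(a_1, a_2)$, cut out a plane in $\F^3$ in the variables $(a_3, a_4, a_3 a_4)$. Thus $\E^\times(\a + A_1)$ is realised as an incidence count between $|A_1|^2$ planes and the $|A_1|^2$ points $(a_3, a_4, a_3 a_4)$, which is precisely the input format for Theorem \ref{t:MR}. The shift $\a \neq 0$ only translates the affine constants defining each plane; in particular, the bound on the number of collinear points in $\{(a, b, ab) : a, b \in A_1\}$ and the $p^{5/8}$ size threshold required to invoke Theorem \ref{t:MR} are inherited directly from the setting of the sum-product estimate \eqref{us} and from the proof of Proposition 2 of \cite{AMRS}, which can be copied without modification.

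Threading this bound back through the greedy extraction produces, in a logarithmic number of rounds (absorbed by $\lesssim$), a decomposition $A = B \sqcup C$ with $\max\{\E^\times(B),\, \E^\times(\a + C)\} \lesssim |A|^{14/5}$, as claimed. The main obstacle is administrative rather than substantive: one must verify that the two parallel extractions---one removing multiplicatively heavy elements of $A$, the other removing multiplicatively heavy elements of $\a + A$---can be carried out simultaneously without a cascading loss of proportion in the surviving sets, and that the $p^{5/8}$ size constraint persists through every refinement to an auxiliary $A_1$. Once this bookkeeping is aligned with that of Section~4 of \cite{KS2}, the exponent $\d = 1/5$ drops out of the Theorem \ref{t:MR} estimate in precisely the same manner as in the additive-energy case of Theorem \ref{t:BW_R}.
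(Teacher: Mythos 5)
Your high-level plan---follow the general-field branch of Theorem \ref{t:BW_R} via Proposition \ref{l:BW_FF}, replacing the additive energy of the extracted set by the multiplicative energy of its $\a$-translate---is the right one, and matches what the paper's footnote prescribes. But the ``single incidence reduction'' you call the crux is broken, and it skips the one idea that makes Proposition \ref{l:BW_FF} non-trivial.

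Your reduction tries to bound $\E^\times(\a + A_1)$ directly by counting incidences between $|A_1|^2$ points $(a_3,a_4,a_3a_4)$ and $|A_1|^2$ planes indexed by $(a_1,a_2)$. Two problems. First, expanding $(\a+a_1)(\a+a_2)=(\a+a_3)(\a+a_4)$ gives $\a a_3 + \a a_4 + a_3a_4 = (\a+a_1)(\a+a_2)-\a^2$, so every one of your planes has normal vector $(\a,\a,1)$: they are all parallel, and the ``plane'' attached to $(a_1,a_2)$ depends only on the value $(\a+a_1)(\a+a_2)$. The incidence count you have set up is therefore literally the definition of $\E^\times(\a+A_1)$ again --- it is circular, and Theorem \ref{t:MR} is not designed for (and gives nothing useful for) a pencil of parallel hyperplanes counted with multiplicity. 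Second, even if one ignores the degeneracy, $m=n=|A_1|^2$ feeds back $O(m^{3/2}) = O(|A_1|^3)$, which is the trivial upper bound on any energy of a set of size $|A_1|$.

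What you have omitted is the popularity structure built into $A_1$ in the proof of Proposition \ref{l:BW_FF}: after pigeonholing on $\E^\times(A)$ to find the dyadic ratio set $P\subseteq A/A$, the set $A_1$ is chosen so that each element has $\gtrsim q$ representations as $\alpha\cdot p_*$ with $\alpha\in A$, $p_*\in P=P^{-1}$, and $q\lesssim|A_1|$. Exactly as in equation \eqref{trick}, one must substitute these representations for two of the four variables in the multiplicative-energy equation and pay a factor $q^{-2}$:
\begin{equation*}
\E^\times(\a+A_1)\leq q^{-2}\,\bigl|\bigl\{(a,a',\alpha,\alpha',p_*,p_*')\in A_1^2\times A^2\times P^2 : (\a+a)(\a+\alpha p_*)=(\a+a')(\a+\alpha' p_*')\bigr\}\bigr|.
\end{equation*}
Expanding and sending $(\a+a')p_*'$ to a single coordinate, this does become a genuine plane-point incidence problem in $\F^3$ with $m=n=|A_1||P||A|$, whose main term $m^{3/2}$, together with the $q^{-2}$ and the relation $q|A_1|\sim|P|t$, reproduces the estimate \eqref{f:ksdf} with $\E^\times(\a+A_1)$ in place of $\E^+(A_1)$; the collinearity and $p$-constraint checks are unchanged. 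Without the $q^{-2}$ savings there is no non-trivial bound to thread through the iteration.

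A secondary point: the outer iteration is not ``two parallel extractions'' in which one must guard against cascading loss of proportion. As in the conclusion of the proof of Theorem \ref{t:BW_R}, one repeatedly peels off from the current remainder $C_j$ a subset $D_j$ with small $\E^\times(\a+D_j)$, stops when $\E^\times(C_{k+1})$ drops below $|A|^3/M$, and controls $\E^\times(\a+B)$, $B=\bigsqcup_j D_j$, by the quarter-power subadditivity of energy (Lemma \ref{1/4ineq} applies verbatim to $\E^\times$ of translates). The output is a genuine partition $A=B\sqcup C$ with no proportionality constraints to maintain.
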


We cannot obtain an equally strong analogue for the second statement of Theorem \ref{t:BW_mult} about the set of the reciprocals. Extending the bound (\ref{f:BW_mult2}) to general fields would require
incidence results for hyperbolae rather than affine objects. However, a weaker result  can most likely be derived on the basis of Proposition 14 in \cite{AMRS}. 

In addition, Corollary \ref{t:BW_C} also applies if one replaces the two energies appearing  therein by multiplicative energies of two distinct translates of $A$.



\subsubsection{Few Sums, Many Products}
Our approach also allows for the energy  generalisation of the well-known result of Elekes and Ruzsa, from the paper \cite{ER},  whose title we have used for this subsection. Namely, for $A\subset \mathbb R$ (as well as of $\mathbb C$) one has
$$
|A+A|^4|AA|\gtrsim |A|^6
$$
In fact, we strengthen the above result to an energy-energy inequality,  which is  ``morally'' equivalent to  $|A+A|^3|AA|\gtrsim |A|^5.$ We present the result as an energy inequality and remark that for general fields we do not have an analogue that would be stronger than \eqref{formula1} in Corollary \ref{t:BW_C}.

\begin{theorem}\label{t:fsmp}
Let $A\subset \mathbb C$.  There exist two disjoint subsets $B$ and $C$ of $A$, each of cardinality $\geq |A|/3$, such that
\begin{equation}\label{formula4}
 \E^{\times} (B)  \cdot  \E^+(C)^{3}  \; \lesssim \;|A|^{11}.
\end{equation}
\end{theorem}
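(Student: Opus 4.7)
The plan is to follow the strategy of Theorem \ref{t:BW_R}, which adapts the Konyagin-Shkredov approach \cite{KS2} and bypasses the Balog-Szemer\'edi-Gowers theorem. The argument has two layers: a single-set asymmetric energy inequality, and the iterative pruning procedure already deployed in the proof of Theorem \ref{t:BW_R} (and used again for Corollary \ref{t:BW_C}) to extract disjoint $B, C \subseteq A$ with $|B|, |C| \geq |A|/3$. The pruning layer should be transferable essentially verbatim, so the substance of the proof is the asymmetric inequality, which morally reads
$$
\E^{\times}(A_1) \cdot \E^{+}(A_1)^3 \;\lesssim\; |A_1|^{11}, \qquad A_1 \subseteq A,
$$
possibly in an asymmetric variant involving both $A$ and $A_1$.

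For the key inequality I would run an incidence-geometric argument in the spirit of Elekes and Ruzsa, executed at the energy level. First, a dyadic pigeonhole on the representation function $r_{A_1 - A_1}$ isolates a popularity level $\tau$ and a set $P_\tau := \{x \in A_1 - A_1 : r_{A_1 - A_1}(x) \sim \tau\}$ such that $\tau^2 |P_\tau| \gtrsim \E^{+}(A_1)/\log|A_1|$ while $\tau |P_\tau| \lesssim |A_1|^2$. Next, set up a Szemer\'edi-Trotter configuration of $|A_1| \cdot |P_\tau|$ lines indexed by pairs $(a, d) \in A_1 \times P_\tau$, acting on a point set encoding the multiplicative structure of $A_1$, with each line carrying $\Omega(|A_1|)$ incidences by virtue of the $\tau$ representations of $d$ as a difference from $A_1$. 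Applying Theorem \ref{t:ST} bounds these incidences from above in terms of $|A_1 \cdot A_1|$; coupled with the Cauchy-Schwarz bound $\E^{\times}(A_1) \cdot |A_1 \cdot A_1| \geq |A_1|^4$, and after substituting $\E^{+}(A_1) \sim \tau^2 |P_\tau|$ and optimising in $\tau$, this should yield the desired cubic exponent on $\E^{+}$.

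The main obstacle is to arrange the Szemer\'edi-Trotter configuration so that the cubic exponent on $\E^{+}(A_1)$ emerges cleanly, rather than the exponent $4$ one would obtain by literal translation of the original ``few sums, many products'' inequality $|A+A|^4 |A\cdot A| \gtrsim |A|^6$ into energies. This corresponds to the ``morally equivalent'' strengthening $|A+A|^3 |A\cdot A| \gtrsim |A|^5$ mentioned in the introduction, and at the Szemer\'edi-Trotter level it requires spending the richness of $P_\tau$ twice, via both the line count and the per-line incidence count, together with exploiting $\tau^2 |P_\tau| \sim \E^{+}(A_1)$ rather than the cruder bound $\tau |P_\tau| \leq |A_1|^2$. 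A minor simplification is that Theorem \ref{t:fsmp} is stated only over $\mathbb{C}$, so the T\'oth-Szemer\'edi-Trotter incidence bound of Theorem \ref{t:ST} suffices and there is no need for the Rudnev incidence theorem or the Stevens-de Zeeuw extension used in the general-field variants.
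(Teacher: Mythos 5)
Your high-level plan is right: the proof runs a key single-set energy inequality through the iterative pruning already used in Theorem~\ref{t:BW_R}/Corollary~\ref{t:BW_C}, and the target inequality is indeed of the form $\E^\times(A_1)\lesssim |A_1|^8|A|^3/\E^+(A)^3$, which after pruning gives the cubic exponent on $\E^+$. The dyadic pigeonhole producing a popular level set $P$ of sums/differences with $t^2|P|\gtrsim \E^+$ and $t|P|\lesssim |A_1|^2$, and the observation that a single Szemer\'edi--Trotter cannot give the exponent $3$ by direct translation of Elekes--Ruzsa, are also correct.

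Where the sketch breaks down is in the Szemer\'edi--Trotter configuration itself. You propose lines indexed by $A_1\times P$ with $\Omega(|A_1|)$ incidences per line, bounded above ``in terms of $|A_1\cdot A_1|$,'' and then you invoke $\E^\times(A_1)\cdot|A_1\cdot A_1|\geq |A_1|^4$. But that Cauchy--Schwarz inequality is a \emph{lower} bound on $\E^\times(A_1)$, so it cannot be combined with an incidence bound on $|A_1\cdot A_1|$ to yield the needed \emph{upper} bound on $\E^\times(A_1)$; the argument as stated does not close. The paper's proof avoids $|A_1\cdot A_1|$ entirely. Instead, one uses the fact that every $a\in A_1$ has $\sim q$ representations $a=s-c$, $s\in P$, $c\in A$ (from the pigeonholing), to write directly
$$
\E^\times(A_1)\;\ll\; q^{-4}\,\bigl|\bigl\{\tfrac{s-c}{r-d}=\tfrac{s'-c'}{r'-d'}\;:\; s,r,s',r'\in P,\ c,d,c',d'\in A\bigr\}\bigr|,
$$
and then bounds this fourth-moment count by $\lesssim |P|^3|A|^3$ via Lemma~2.5 of \cite{MR-NS}. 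That lemma is itself a Szemer\'edi--Trotter consequence, but the underlying configuration is for restricted ``cross-ratio-like'' equations, not the $A_1\times P$ line family you describe. Substituting $q\sim|P|t/|A_1|$, $\E^+(A)\sim|P|t^2$ and $|P|t\lesssim|A_1|^2$ then gives $\E^\times(A_1)\lesssim |A_1|^8|A|^3/\E^+(A)^3$, after which the pruning argument finishes as you anticipated. So the missing idea is the fourth-power reformulation of $\E^\times(A_1)$ via the $q$-fold representations together with the \cite{MR-NS} counting lemma; the single-SzT-plus-Cauchy--Schwarz route you describe does not yield an upper bound on the multiplicative energy.
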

However, since the exponents $1,3$ in the above estimate are quite far from one another, estimate \eqref{formula4} 
 beats the non-optimal estimate \eqref{formula3} only if the additive anergy is sufficiently  large.  Besides, the two energies cannot be swapped in Theorem \ref{t:fsmp}.

 \subsubsection{The sum-product estimate over $\mathbb R$}
Theorem \ref{t:fsmp} yields a minor improvement of the stat-of-the-art sum-product exponent over the reals. Konyagin and the second author \cite{KS1}, \cite{KS2} set a new world record towards Conjecture \ref{esc}, having shown that for  a finite set $A$ of reals,
\begin{equation}\label{ksr}
\max(|A+A|,\,|A\cdot A|)\;\gtrsim\; |A|^{\frac{4}{3}+\frac{5}{9813}}.
\end{equation}
This improved the previous best-known exponent $\frac{4}{3}$ obtained some ten years earlier as a result of a graceful and renowned construction by Solymosi \cite{Sol05}, which only relies on the order properties of reals and does not use the Szemer\'edi-Trotter theorem. Within the arguments in \cite{KS1}, \cite{KS2}, the margin by which the value $\frac{4}{3}$ can be beaten depends on the best known estimates apropos of two issues, which can be described as ``few products, many sums'' and ``few sums, many products''. The current approach to both issues that furnishes sufficiently strong estimates is largely based on the Szemer\'edi-Trotter theorem. Dealing with the ``few sums, many products'' side of the coin has been much more successful; this was first done by Elekes and Ruzsa \cite{ER}. Its counterpart proves to be much harder; it is referred by some authors as the {\em weak Erd\H os-Szemer\'edi conjecture}, with the best known estimate stated as \cite[Theorem 12]{KS2}, originating in \cite{s_sumsets}.

Konyagin and the second author proved an energy version of the estimate of Elekes and Ruzsa \cite[Theorem 9]{KS1}; it is slightly weaker than estimate \eqref{formula4} above herein. The following improvement of the sum-product inequality \eqref{ksr} comes after a calculation if one replaces the estimate of \cite[Theorem 9]{KS1} with a variant of estimate \eqref{formula4}. This improves the estimates of \cite[Lemma 18]{KS2} and if one chases through the ensuing \cite[Proof of Theorem 3]{KS2}, the result becomes as follows.

\begin{corollary}\label{objeli} For a finite set $A\subset \mathbb R$, one has
$$
\max(|A+A|,\,|A\cdot A|)\;\gtrsim\; |A|^{\frac{4}{3}+\frac{1}{1509}}.
$$
\end{corollary}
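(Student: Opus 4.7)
The plan is to follow the structure of the proof of the main sum-product exponent in \cite{KS2}, substituting Theorem \ref{t:fsmp} in place of the weaker ``few sums, many products'' energy estimate of \cite[Theorem 9]{KS1} at the single place in the argument where the latter is invoked. The substitution itself is essentially mechanical; the content of the corollary is the arithmetic consequence of a sharper coefficient in the linear program that governs the optimal exponent.

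More concretely, the argument of \cite{KS2} proceeds by a case split on the sizes of the additive and multiplicative energies $\E^+(A)$, $\E^\times(A)$, compared against suitable threshold powers of $|A|$, $|A+A|$ and $|A\cdot A|$. The ``few products, many sums'' input comes from the ``weak Erd\H os--Szemer\'edi'' estimate recorded as \cite[Theorem 12]{KS2}; the ``few sums, many products'' input enters exclusively through \cite[Lemma 18]{KS2}, whose proof uses \cite[Theorem 9]{KS1}, a result that is in effect the energy inequality $\E^\times(B)\cdot\E^+(C)^{\,c}\lesssim|A|^{e}$ for certain exponents $c,e$ weaker than those $(c,e)=(3,11)$ of our \eqref{formula4}. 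First, I would re-prove \cite[Lemma 18]{KS2} word for word, with the single change of invoking Theorem \ref{t:fsmp} instead of \cite[Theorem 9]{KS1}. This produces an improved version of Lemma 18 in which one exponent is sharpened in a quantifiable way.

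Next, I would chase this improved Lemma 18 through \cite[Proof of Theorem 3]{KS2}. That proof is a balancing argument in several parameters (the cardinalities of the two ``rich'' subsets of $A$ produced by Balog--Szemer\'edi--Gowers-type decompositions, together with the thresholds for $|A+A|$, $|A\cdot A|$ and the two energies), and the final exponent arises by equating the worst bounds across cases. Plugging in the new coefficient from the improved Lemma 18 leaves every other step of \cite[Proof of Theorem 3]{KS2} untouched and converts the final optimum from $\tfrac{4}{3}+\tfrac{5}{9813}$ to $\tfrac{4}{3}+\tfrac{1}{1509}$, precisely because the $(1,3)$ split of energies in \eqref{formula4} is tighter than the split available from \cite[Theorem 9]{KS1}.

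The main obstacle is purely bookkeeping: one has to verify that, after sharpening the ``few sums, many products'' coefficient, the case that used to be binding in \cite{KS2} is still the binding one, so that the improvement is not absorbed by another estimate further down the chain. In particular, one needs to check that the ``weak Erd\H os--Szemer\'edi'' side of the balance (governed by \cite[Theorem 12]{KS2}) does not become the bottleneck at the new parameter values; a direct substitution into the linear program confirms this and delivers the stated exponent. No new conceptual ingredient beyond Theorem \ref{t:fsmp} is needed.
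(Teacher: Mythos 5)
Your high-level strategy -- improve the ``few sums, many products'' coefficient that feeds into \cite[Lemma 18]{KS2} and then recompute the optimum in \cite[Proof of Theorem 3]{KS2} -- is the right one, and it is what the paper does. But the specific substitution you propose does not go through, because the \emph{type} of the statement you want to drop in is wrong.

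What \cite[Theorem 9]{KS1} actually supplies, and what the chain through \cite[Lemma 18]{KS2} consumes, is a cardinality--energy inequality for the \emph{full} set: $|A\cdot A|,\,|A/A| \gtrsim \E^+(A)^4/|A|^{10}$. The paper replaces this input with \eqref{subs}, namely $|A\cdot A|,\,|A/A| \gtrsim \E^+(A)^3/|A|^7$, which has the same shape and so slots in mechanically. Theorem~\ref{t:fsmp}, on the other hand, is a \emph{decomposition}: it produces disjoint $B,C\subset A$ with $\E^\times(B)\,\E^+(C)^3\lesssim |A|^{11}$. If you try to unwind this into a product-set bound via Cauchy--Schwarz you get $|A\cdot A|\gtrsim |B|^4/\E^\times(B)\gtrsim \E^+(C)^3/|A|^{7}$, which involves $\E^+(C)$ for a subset $C$ that the decomposition has specifically chosen to have \emph{small} additive energy. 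There is no way to upgrade $\E^+(C)$ back to $\E^+(A)$ on the nose, so you never recover the inequality you need. In short: re-proving \cite[Lemma 18]{KS2} ``word for word, invoking Theorem~\ref{t:fsmp} instead'' is not a well-typed substitution.

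The missing ingredient is the per-iteration inequality \eqref{vers} established inside the proof of Theorem~\ref{t:fsmp}, which bounds the multiplicative energy of the pigeonholed subset $A_1\subseteq A$ by $\E^\times(A_1)\lesssim |A_1|^8|A|^3/\E^+(A)^3$ -- crucially, with $\E^+(A)$ of the \emph{original} set on the right-hand side. Applying Cauchy--Schwarz to $A_1$ then gives $|A_1\cdot A_1|\geq |A_1|^4/\E^\times(A_1)\gtrsim \E^+(A)^3/(|A_1|^4|A|^3)\geq \E^+(A)^3/|A|^7$, i.e.~\eqref{subs}, which is the correct drop-in replacement for \cite[Theorem 9]{KS1}. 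This improves the $L^{-16}$ term in \cite[Lemma 18, (22), (24)]{KS2} to $L^{-12}$, and recomputing the numerical optimisation from \cite[Proof of Theorem 3, (26) onward]{KS2} yields the exponent $\tfrac43+\tfrac{1}{1509}$. So you should cite \eqref{vers} (or \eqref{subs}) rather than the decomposition Theorem~\ref{t:fsmp}; your bookkeeping comments about the binding case in the linear program are then the right sanity checks to carry out, but the type mismatch has to be fixed first.
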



\subsubsection{Balog-Szemer\'edi-Gowers Theorem}
We present some auxiliary results in this short section as a weaker, but arguably less technical alternative to the forthcoming key Propositions \ref{l:BW_FF},  \ref{l:BW_R}. In contrast, this section is about the Balog--Szemer\'{e}di--Gowers theorem \cite{Gow_m}, which our main proofs avoid. We take advantage of the opportunity to present a small but potentially useful modification of one result from Schoen's paper \cite{S_BSzG}.
For modern forms of the Balog-Szemer\'edi-Gowers theorem see, e.g., \cite{BouGar09} and \cite{S_BSzG}.

It is easy to see that the following statement implies the original Balog--Szemer\'{e}di--Gowers theorem.

\begin{theorem}
    Let $(G,+)$ be an abelian group.
    Let $A\subseteq G$ be a set, $K\ge 1$ be a real number, and $k\ge 2$ be an integer.
    Suppose that $\E^+ (A) \ge |A|^3 / K$.
    Then there are sets $A_* \subseteq A$, $P \subseteq A-A$ such that $|A_*| \ge |A|/(8kK)$,
    $|P| \le 8kK|A|$
     and for any $a_1,\dots, a_k \in A_*$ one has
\begin{equation}\label{f:BSzG_Schoen}
    |A \cap (P+a_1) \cap \dots \cap (P+a_k)| \ge \frac{|A|}{4K} \,.
\end{equation}
\label{t:BSzG_Schoen}
\end{theorem}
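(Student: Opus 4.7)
The plan is to follow the argument of Schoen in \cite{S_BSzG}, making the small modification indicated in the paper in order to secure the uniform $k$-tuple intersection bound. Begin by localising the energy on popular differences: set $T := |A|/(8kK)$ and $P := \{d \in A - A : r_{A - A}(d) \ge T\}$. The cardinality bound $|P| \le |A|^2/T = 8kK|A|$ is immediate. The contribution of non-popular differences to $\E^+(A)$ is $\sum_{r(d) < T} r(d)^2 \le T|A|^2 = |A|^3/(8kK)$, which together with the hypothesis $\E^+(A) \ge |A|^3/K$ yields $\sum_{d \in P} r(d)^2 \ge (1 - 1/(8k))|A|^3/K$.

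Next, phrase the question in bipartite-graph terms. Consider the bipartite graph $\Gamma$ on $A \times A$ with edge set $\{(a, b) : b - a \in P\}$, and write $R(a) := \{b \in A : b - a \in P\}$ and $L(b) := \{a \in A : b - a \in P\}$. The edge count is $|E(\Gamma)| = \sum_{d \in P} r(d) \ge |A|^{-1}\sum_{d \in P} r(d)^2 \ge (7/8)|A|^2/K$, so the average degree on each side is $\gg |A|/K$. The plan is to produce a ``common popular set'' $Q \subseteq A$ of size $\ge |A|/(4K)$ together with $A_* \subseteq A$ of size $\ge |A|/(8kK)$ such that every $a \in A_*$ is adjacent in $\Gamma$ to at least a $(1 - 1/(4k))$-fraction of $Q$. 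Granted this, for any $a_1, \ldots, a_k \in A_*$ a union bound gives $|\bigcap_i R(a_i) \cap Q| \ge |Q| - \sum_i |Q \setminus R(a_i)| \ge (3/4)|Q|$, which after tuning the constants in $T$ and in the threshold for $Q$ is $\ge |A|/(4K)$, delivering the required intersection bound on $A \cap \bigcap_i (a_i + P)$.

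To construct $A_*$ and $Q$, carry out two Markov-type refinements. Take $Q := \{b \in A : |L(b)| \ge |A|/(2K)\}$; a standard averaging on the edge count of $\Gamma$ yields $|Q| \gtrsim |A|/K$, comfortably above $|A|/(4K)$ after constants are adjusted. Then set $A_* := \{a \in A : |R(a) \cap Q| \ge (1 - 1/(4k))|Q|\}$, the set of $a$'s that are adjacent to almost all of $Q$. By the union bound in the previous paragraph, such $A_*$ automatically satisfies the uniform $k$-tuple condition.

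The main obstacle is verifying the lower bound $|A_*| \ge |A|/(8kK)$: a bare-hands Markov argument applied to $\psi(a) := |R(a) \cap Q|$ falls short in the regime where $K$ is large, since it produces a bound of the form $|A|/\mathrm{poly}(K)$ that is too weak. To recover the correct dependence, one must exploit the translation-invariant (Cayley-like) structure of $\Gamma$ coming from $P \subseteq A - A$, together with the near-sharp concentration of the degree sequence on each side that follows from $\E^+(A) \ge |A|^3/K$. The factor $k$ in the threshold $T$ and in the closeness parameter $1/(4k)$ is chosen precisely so as to trade one factor of $k$ in $|A_*|$ (and symmetrically in $|P|$) for the uniformity of the common-neighbour guarantee over all $k$-tuples; this delicate book-keeping, which avoids the K\H{o}v\'ari--S\'os--Tur\'an-style loss one would otherwise incur, is the ``small but potentially useful modification'' of Schoen's argument alluded to in the paper.
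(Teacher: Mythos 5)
Your definition of $P$ (threshold $T = |A|/(8kK)$, so $|P| \le 8kK|A|$) and the accounting of the energy on popular differences agree with the paper. However, the graph you then work on --- the bipartite Cayley graph on $A \times A$ determined by $P$ --- has edge density only of order $1/K$, and this is where the argument comes apart. Taking $Q$ to be the left-vertices of degree $\ge |A|/(2K)$ and averaging over $a \in A$, the typical vertex sees only a $\Theta(1/K)$-fraction of $Q$, not a $(1-1/(4k))$-fraction; no variant of Markov's inequality on a graph of density $\sim 1/K$ can produce a set of $|A|/(8kK)$ vertices each adjacent to almost all of $Q$. You flag this as the ``main obstacle'' and suggest it can be repaired using the Cayley structure and degree concentration, but that is asserted rather than shown, and I do not see how it would help: the Cayley bipartite graph on $A\times A$ defined by $P$ has no reason to be locally dense unless one first restricts to a carefully chosen subset.

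The missing step is precisely that restriction. The paper expands $\E^+(A) = \sum_s \sum_{x,y} A_s(x) A_s(y)$ with $A_s = A \cap (A-s)$, and splits each inner sum according to whether $x - y \in P_\eps$. The contribution from $x-y \notin P_\eps$ is $< \eps|A|^3/(2K)$, and the total contribution from slices with $|A_s| < |A|/(2K)$ is also dominated, so by pigeonhole there is a single popular $s$, $|A_s| \ge |A|/(2K)$, for which a $(1-\eps)$-fraction of the pairs $(x,y) \in A_s \times A_s$ satisfy $x - y \in P_\eps$. One then works with the ordinary graph on vertex set $A_s$ (not a bipartite graph on $A\times A$) whose edges are exactly those pairs; this graph has density $\ge 1 - \eps$, so the set $A_*$ of vertices of degree $\ge (1-2\eps)|A_s|$ has size $\ge \eps|A_s| \ge |A|/(8kK)$ with $\eps = 1/(4k)$, and any $k$ of its members have at least $(1-2k\eps)|A_s| = |A_s|/2 \ge |A|/(4K)$ common neighbours, which are exactly elements of $A \cap (P+a_1)\cap\cdots\cap(P+a_k)$. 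Restricting to a popular slice $A_s$ converts the sparse (density $\sim 1/K$) problem you are stuck with into a dense (density $1-\eps$) one where the common-neighbour count is elementary; that passage to a slice is exactly what your proposal lacks.
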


Theorem~\ref{t:BSzG_Schoen} allows, e.g., for the following  analogue of the forthcoming key Prorosition~\ref{l:BW_R},  which basically stand for the Balog-Wooley decomposition. The estimates are weaker but proofs are simpler.

\begin{proposition}
    Let $A \subset \mathbb C$ be  a  set.
    Then there is $A_1 \subseteq A$ such that $|A_1| \gtrsim \E^\times (A) |A|^{-2}$ and
\begin{equation}\label{f:sp_E_E_1'}
    (\E^{+} (A_1))^2 (\E^\times (A))^9 \lesssim |A|^{32} \,.
\end{equation}
\label{t:sp_E_E'}
\end{proposition}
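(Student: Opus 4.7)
The plan is to apply the Balog--Szemer\'edi--Gowers theorem multiplicatively to $A$, extract a large subset $A_1$ with controlled multiplicative doubling, and then convert this into the claimed upper bound on $\E^+(A_1)$ via a Szemer\'edi--Trotter based incidence estimate. Setting $K := |A|^3/\E^\times(A)$, Theorem~\ref{t:BSzG_Schoen} applied in the multiplicative group $\mathbb{C}^*$ (or the classical BSG theorem to which it is equivalent) yields $A_1 \subseteq A$ with $|A_1| \gtrsim |A|/K = \E^\times(A)/|A|^2$, together with a ``popular ratio'' set $P \subseteq A/A$ of size $|P| \lesssim K|A|$, satisfying the intersection property $|A \cap \lambda_1 P \cap \lambda_2 P| \gtrsim |A|/K$ for every $\lambda_1, \lambda_2 \in A_1$. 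From this one reads off $A_1/A_1 \subseteq P \cdot P^{-1}$, and Pl\"unnecke--Ruzsa delivers a multiplicative doubling bound $|A_1 \cdot A_1| \lesssim K^{c_0} |A_1|$ for some absolute constant $c_0$.

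The second ingredient is the inequality $\E^+(X) \lesssim |X|^{3/2}\, |X \cdot X|\, \log|X|$, valid for any finite $X \subset \mathbb{C}$. It follows by dyadically decomposing $\E^+(X) \sim \log|X| \cdot \Delta^2 |S_\Delta|$ on a popular level set $S_\Delta = \{s : r_{X+X}(s) \sim \Delta\}$, and counting incidences between the Elekes lines $\ell_{a,b}: y = a(x-b)$, $(a,b) \in X^2$, and the point set $S_\Delta \times (X \cdot X)$. Straightforward counting gives $\sim |X| \Delta |S_\Delta|$ incidences, whereas Szemer\'edi--Trotter bounds them by $(|X|^2 \cdot |S_\Delta||X \cdot X|)^{2/3}$; rearranging yields $\Delta^3 |S_\Delta| \lesssim |X|\,|X \cdot X|^2$, and combining this with the trivial $\Delta|S_\Delta| \leq |X|^2$ via a geometric mean produces the displayed inequality.

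Substituting $X = A_1$, and using $|A_1 \cdot A_1| \lesssim K^{c_0} |A_1|$ together with $|A_1| \leq |A|$, gives $\E^+(A_1) \lesssim K^{c_0} |A|^{5/2} \log|A|$. Squaring and multiplying by $\E^\times(A)^9 = |A|^{27}/K^9$ then produces
\begin{equation*}
(\E^+(A_1))^2 \, (\E^\times(A))^9 \lesssim K^{2c_0 - 9}\, |A|^{32} \, \log^2|A|,
\end{equation*}
which establishes \eqref{f:sp_E_E_1'} as soon as $2c_0 \leq 9$. The main technical obstacle is to ensure that the BSG--Pl\"unnecke bookkeeping delivers a doubling exponent $c_0 \leq 9/2$; this is comfortably within reach of Schoen's sharp form of the Balog--Szemer\'edi--Gowers theorem, but it does demand some care in tracking the Pl\"unnecke constants when transferring from the ratio set $A_1 \cdot A_1^{-1}$ (bounded via $P \cdot P^{-1}$) to the product set $A_1 \cdot A_1$.
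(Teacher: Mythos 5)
Your plan is a genuinely different route from the paper's: you aim to extract a subset $A_1$ of bounded \emph{multiplicative doubling} from Theorem~\ref{t:BSzG_Schoen}, and then feed it into the inequality $\E^+(X)\lesssim |X|^{3/2}|X\cdot X|\log|X|$ (which you prove correctly via an Elekes-style incidence count; this step is fine). The paper never manufactures a set of small doubling at all: it bounds $\E^+(A_*)$ directly by lifting the equation $a_1+a_2=a_3+a_4$, $a_i\in A_*$, into the equation $a(p_1^{-1}+p_2^{-1})=b(p_3^{-1}+p_4^{-1})$ with $a,b\in A$, $p_i\in P$, each solution being counted $\gtrsim(|A|/K)^2$ times by the intersection property with $k=2$, and then invokes a Szemer\'edi--Trotter consequence (Lemma~2.5 of \cite{MR-NS}) to bound the number of solutions by $\lesssim(\E^\times(A))^{1/2}|P|^3$. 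This gives $\E^+(A_*)\lesssim K^5|A|(\E^\times(A))^{1/2}$, which is precisely $c_0=9/2$ in your notation, with no Pl\"unnecke--Ruzsa losses.

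There is, however, a concrete gap in your step producing the doubling bound. From $A_1/A_1\subseteq P\cdot P^{-1}$ with $|P|\lesssim K|A|$ you only get $|A_1/A_1|\le |P|^2\lesssim K^2|A|^2$; combined with $|A_1|\gtrsim |A|/K$ this yields $|A_1/A_1|/|A_1|\lesssim K^3|A|$, not $K^{O(1)}$ — the extra factor of $|A|$ stays. So the inclusion alone does \emph{not} give bounded doubling, and Pl\"unnecke--Ruzsa applied to it makes matters worse, not better. To obtain a genuine $K^{O(1)}$ doubling from Theorem~\ref{t:BSzG_Schoen}'s intersection property one must go through the full classical BSG pruning argument (which is exactly what Schoen does), and then a further Pl\"unnecke--Ruzsa step is needed to pass from the ratio set to the product set, squaring the doubling constant. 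Even with Schoen's sharp form ($|A'/A'|\lesssim K^4|A|\lesssim K^5|A'|$) this leaves you well above the threshold $c_0\le 9/2$: you would land around $c_0\approx 10$. Your assertion that $c_0\le 9/2$ is ``comfortably within reach'' is therefore not justified by what is written, and the ``care in tracking the Pl\"unnecke constants'' you flag is precisely where the argument breaks. The paper's solution is to abandon the detour through doubling altogether and count directly; your inequality $\E^+(X)\lesssim |X|^{3/2}|X\cdot X|\log|X|$ is a nice ingredient, but the way you source the doubling input for it cannot reach the exponent $9$ in \eqref{f:sp_E_E_1'}.
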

The special case $\mathbb F=\mathbb C$ in the above formulation indicates the use of the Szemer\'edi-Trotter theorem in the proof below. We challenge an interested reader to formulate a general field analogue, replacing  the use of Theorem \ref{t:ST} by Theorem \ref{t:SzT_Fp}.

\section{Proof of  Theorem \ref{t:BW_R}}

We start out with two intermediate results towards the estimate of Theorem \ref{t:BW_R}: one will later result in $\delta=1/5$ for a general $\F$ and the other in $\delta=1/4$ for $\F=\mathbb C$ in Theorem \ref{t:BW_R}.

\begin{proposition}\label{l:BW_FF}
Let $A\subset \mathbb{F}$, with
$|A|^6\lesssim p^2 \E^\times(A)$
in positive characteristic. Then there is a set $A_{1}\subseteq A$ such that $|A_1|\gtrsim \sqrt{\E^\times(A)/|A|}$ and
\begin{equation}\label{f:ksdf}
\E^{+}(A_1)\lesssim |A_1|^{11/2}|A|^{3/2}(\E^\times(A))^{-3/2}\,.
\end{equation}
The  energies  $\E^\times,\E^+$ in the above statement can be swapped (for some other $A_1$).
\end{proposition}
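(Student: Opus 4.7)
The plan is to use the multiplicative energy of $A$ to extract a multiplicatively rich subset $A_1 \subseteq A$, and then to control $\E^+(A_1)$ by interpreting a suitable $6$-tuple count as a $3$-dimensional point-plane incidence problem and applying Theorem \ref{t:MR}.

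First I would dyadically pigeonhole the multiplicative energy $\E^\times(A) = \sum_s r_{A/A}(s)^2$, where $r_{A/A}(s) = |\{(a,b) \in A^2: a = sb\}|$. This produces a level $\Delta \ge 1$ and a set $P \subseteq A/A$ with $r_{A/A}(s) \sim \Delta$ for each $s \in P$, and $\Delta^2 |P| \gtrsim \E^\times(A)/\log|A|$. Setting $r(a) = |\{(s,b) \in P \times A : sb = a\}|$, so that $\sum_{a \in A} r(a) \sim \Delta |P|$, a second pigeonhole provides $M \ge 1$ and $A_1 := \{a \in A : r(a) \sim M\}$ with $M |A_1| \gtrsim \Delta |P|/\log|A|$. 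Since $M \le \min(|P|,|A|) \le \sqrt{|P| \cdot |A|}$, this gives $|A_1| \gtrsim \Delta \sqrt{|P|/|A|} \gtrsim \sqrt{\E^\times(A)/|A|}$, verifying the claimed size of $A_1$.

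Next, substituting the multiplicative representations $a_1 = s_1 b_1$ and $a_2 = s_2 b_2$ into two of the four elements in the additive quadruples underlying $\E^+(A_1)$ yields, after discarding the constraint $s_i b_i \in A_1$, the ``trick'' equation
\begin{equation*}
M^2 \E^+(A_1) \lesssim \left| \left\{ (s_1, b_1, s_2, b_2, a_3, a_4) \in P^2 \times A^2 \times A_1^2 : s_1 b_1 - s_2 b_2 = a_3 - a_4 \right\} \right|.
\end{equation*}
I would view the right-hand side as a count of incidences in $\F^3$ between the $|P|^2 |A_1|$ planes $\{ s_1 X - s_2 Y - Z + a_3 = 0 \}$ indexed by $(s_1, s_2, a_3) \in P^2 \times A_1$ and the Cartesian point set $A \times A \times A_1$ of size $|A|^2 |A_1|$. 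An elementary case analysis shows that no line in $\F^3$ carries more than $k \lesssim |A|$ points from the Cartesian product. Theorem \ref{t:MR} then gives
\begin{equation*}
M^2 \E^+(A_1) \lesssim |P|^2 |A_1| \cdot (|A| |A_1|^{1/2} + |A|) \lesssim |P|^2 |A| |A_1|^{3/2},
\end{equation*}
provided the hypotheses $n \le m$ and $n \le p^2$ hold; after substituting the lower bound $|A_1| \gtrsim \sqrt{\E^\times(A)/|A|}$, the latter condition matches the assumed inequality $|A|^6 \lesssim p^2 \E^\times(A)$.

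Finally, substituting $M^2 \sim \Delta^2 |P|^2/|A_1|^2$ and $\Delta^2 |P| \sim \E^\times(A)$ into the above estimate, and eliminating the free parameter $|P|$ using the trivial bound $\Delta \le |A|$ (so that $|P| \le |A|^4/\E^\times(A)$), yields after routine algebra the claimed inequality $\E^+(A_1) \lesssim |A_1|^{11/2} |A|^{3/2} (\E^\times(A))^{-3/2}$. The swap statement follows from the same scheme with the roles of multiplication and addition interchanged: pigeonhole $\E^+(A)$ to obtain a popular-differences set, define $A_1$ consisting of additively rich elements, and use additive substitutions $a = t + b$ in place of multiplicative ones in the trick equation. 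The main obstacle I anticipate is the parameter bookkeeping in the final step: tracking the dyadic parameters $\Delta$, $M$, and $|P|$ so that the final bound truly exhibits the exponents $11/2$, $3/2$, and $-3/2$ independently of the dyadic choices. Subtleties arise when $|P| < |A|$ (forcing either a dual application of Theorem \ref{t:MR} or an alternative partitioning of the incidence), and one must ensure that the $k \cdot m$ lower-order term of the Rudnev bound is dominated by the main $m \cdot n^{1/2}$ term throughout.
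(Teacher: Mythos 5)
Your overall strategy --- dyadic pigeonholing for a popular ratio set $P$ and a popular subset $A_1$, the substitution ``trick'', an incidence bound via Theorem~\ref{t:MR}, then algebra --- is indeed the paper's. But your particular split of variables between planes and points creates a genuine gap, and the final bookkeeping (which you yourself flag as a likely obstacle) cannot close with your set-up.

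After substitution you have a six-variable equation $s_1b_1 + a_4 = s_2b_2 + a_3$ over $(s_1, s_2) \in P^2$, $(b_1, b_2) \in A^2$, $(a_3, a_4) \in A_1^2$. You place both $s$-variables and one $A_1$-variable on the planes and both $b$-variables and the other $A_1$-variable on the points, giving $m = |P|^2|A_1|$ planes and $n = |A|^2|A_1|$ points. The paper instead sends one element from \emph{each} of the three symmetric pairs to the plane index and the other three to the point, so that $m = n = |A_1||P||A|$. This balancing matters twice over. First, Theorem~\ref{t:MR} needs $n \le m$, which for you requires $|A| \le |P|$; this fails in general (typically $|P|$ is much smaller than $|A|$), and dualizing or padding replaces $mn^{1/2}$ by $nm^{1/2}$, a different bound with a different collinearity parameter. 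Second, even in the legal regime $|P| \ge |A|$, your term $|P|^2|A|\,|A_1|^{3/2}$ exceeds the balanced $(|A||P||A_1|)^{3/2}$ by the factor $(|P|/|A|)^{1/2} \ge 1$: for fixed product $mn$, the bound $\max(m,n)\min(m,n)^{1/2}$ is smallest when $m = n$. Chasing the algebra with $M^2 \sim \Delta^2|P|^2/|A_1|^2$ and $\Delta^2|P| \sim \E^\times(A)$, your estimate yields $\E^+(A_1) \lesssim |P|\,|A|\,|A_1|^{7/2}/\E^\times(A)$; inserting even the optimal $|P| \lesssim |A_1|^4/\E^\times(A)$ gives $|A|\,|A_1|^{15/2}/\E^\times(A)^2$, which exceeds the target in \eqref{f:ksdf} by $|A_1|^2/(|A|\,\E^\times(A))^{1/2}$, a factor that is $\gtrsim 1$ precisely in the regime $|P| \ge |A|$ where your incidence bound is valid.

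There are two further gaps in the parameter bookkeeping. The inequality $|P| \le |A|^4/\E^\times(A)$ does \emph{not} follow from $\Delta \le |A|$: combined with $\Delta^2 |P| \sim \E^\times(A)$, that gives the \emph{lower} bound $|P| \gtrsim \E^\times(A)/|A|^2$, not an upper one. More importantly, you never establish $M \lesssim |A_1|$, which is what actually yields the sharp $|P| \lesssim |A_1|^4/\E^\times(A)$. The paper secures its analogue $q \lesssim |A_1|$ by a \emph{second} dyadic pigeonhole on ordinates whenever the first pigeonhole on abscissae returns $q' > |A'|$; a single pigeonhole on $r(a)$, as you use, does not provide this. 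To repair the proof, adopt the paper's balanced plane/point split (one coordinate from each of the three pairs, so $m = n = |A||P||A_1|$), and add the second pigeonhole to force $M \lesssim |A_1|$, after which the algebra recovers the stated exponents.
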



\begin{proof}
Using the  pigeonhole principle, we choose a dyadic group $P$ of ratios from $A/A$, with approximately some $t$ realisations, which supports at least a fraction of $\frac{1}{\log_2|A|}$ of $\E^\times(A)$. More precisely, there is a set $P\subseteq A/A$ and an integer $t$, such that $|A|^2/(2|A/A|)\leq t\leq |A|$, $t^2|P|\sim \E^\times(A)$, and $t<|A\cap xA|\leq 2t$ for any $x\in P$.  That is, each line through the origin in $\F^2$, with  a slope in $P$ supports about $t$ points of $A\times A$.

Let $S\subseteq A\times A$ be the set of points supported on these lines with slopes in $P$; so $|P|t\leq|S| <2 |P|t$.
Let $\pi_x : S\mapsto A$ be the projection of points of $S$ to the $x$-axis: $\pi_x(s_x,s_y)=s_x$. The projection $\pi_y$ is similarly defined as the projection to the set of ordinates.

Consider the set $A_x$ of abscissae of $S$, that is  $A_x=\pi_x(S)$. By another dyadic pigeonhole argument, we  find a set $A'\subseteq A_x$ of popular abscissae for $S$. There exists $A'\subseteq A_x$, and a number $q'$ such that for every $x'\in A'$, the vertical line through $x'$ supports approximately $q'$ points of $S$ (more precisely $q'<|S\cap \{x=x'\}|\leq 2q'$), and  $|A'|q'\sim |S|$. Observe that $q'\leq \min(|A|,|P|)$, but in the sequel we need an analogue of the slightly stronger inequality $q'\lesssim |A'|$, which is not necessarily true.

So if $q'\leq |A'|$ then we set $A_1=A'$ and $q=q'$. Otherwise we do another dyadic pigeonholing, now by ordinates. I.e., we consider the plane set $S'=S\cap \pi_x^{-1}(A')$ and find a number $q''$ and a set $A''\subseteq \pi_y(S')$ such that $q''<|S'\cap \{y=y''\}|\leq 2q''$ for all $y''\in A''$. In other words,  $A''$ is the set of popular ordinates of the set $S'$ (which in turn is almost as big as $S$): the horizontal line through each $y\in A''$ contains about $q''$ points of $S'$.

We have
$$q'|A'|\sim q''|A''|$$ and since clearly $q''\leq |A'|$, plus since we assume $q'>|A'|$, we must have $|A''|\gtrsim q'>|A'|\geq q''$. We  conclude that $q''< q' \lesssim |A''|$ and set $A_1=A''$ and $q=q''$.

Hence now $|A_1|\gtrsim q$, and so we have
\begin{equation}\label{a1bd} |A_1|^2\gtrsim  q|A_1|\sim |S|\geq |P|t=\frac{|P|t^2}{t}\sim \frac{\E^\times(A)}{t}\geq \frac{\E^\times(A)}{|A|}\,.\end{equation}

Besides, since $|P|t \le |S| \lesssim |A_1|^2$, and $t^2\sim \E^\times(A)/|P|$ we conclude that
\begin{equation}\label{need}
|P|\lesssim |A_1|^4/\E^\times(A).\end{equation}

Without loss of generality let us regard $A_1=A'$, that is the set of popular abscissae, rather than ordinates. We then have by construction of $A'$ a set, each member of which can be represented at least $q$ times as a ratio from $A/P$:
\begin{align}\label{trick}
 \nonumber \E^+(A_1)&=|\{(a,a',b,b')\in A_1^4: a+b=a'+b'\}|\\
&\leq q^{-2}|\{(a,a',p_*,p'_*,\a,\a')\in A_1^2\times P^2\times A^2: a+\a/p_* =a'+\a'/p'_* \}|
\end{align}
Note that by symmetry of $A\times A$,  $P=P^{-1}$, so we can (but do not have to -- this is only a gesture towards the fact that there is no difference as to whether $A_1$ has been taken as $A'$ or $A''$) replace division by $p$ by multiplication.

Consider then the family  of $m=|A_1||P||A|$ planes, with equations $a+p_* x=y+\a'z:\;(a,p_*,\a')\in A_1\times P\times A$ and the same number of points $(x,y,z)\in P\times A_1\times A$. Applying Theorem~\ref{t:MR}  we claim that, in terms of the contribution of the main term $m^{3/2}$ in the estimate of the Theorem
\begin{equation}\label{alm}\E^+(A_1)\ll (|A||P||A_1|)^{3/2}q^{-2},\end{equation}
to be fully justified shortly.

Indeed, the maximum number of collinear points $k$ in the estimate of Theorem ~\ref{t:MR} is bounded by $\max(|A|,|P|)$.
If the maximum equals $|P|$ then to drop the $km$ term in the estimate of Theorem \ref{t:MR} it therefore suffices to show that $|P|\lesssim |A||A_1|$. This is true, since we have established that  $|P|\lesssim |A_1|^2$.
Now if $\max(|A|,|P|) = |A|$ then we need to check $|A| \lesssim |A_1| |P|$.
But $|P| \gtrsim \E^\times (A) / |A_1|^2$ and hence everything follows from a trivial bound $\E^\times (A) \ge |A|^2 \ge |A_1| |A|$.

In positive characteristic Theorem ~\ref{t:MR} is applicable when $ |A_1||P||A|\leq p^2$. By \eqref{need} and the trivial bound $|A_1|\leq|A|$, this will be true given that $|A|^6\lesssim p^2\E^\times(A)$. To strengthen the latter constraint to $|A|^6\leq p^2\E^\times(A)$, as claimed, we proceed as follows.

Suppose, $|A|^6\leq p^2\E^\times(A)$. Partition the set of points $\mathcal P\times A_1\times A$ in $\lesssim 1$ pieces $\{\mathcal P_i\}$ (say, by partitioning $A$), whose size differs by at most an absolute constant factor, and such that each $|\mathcal P_i|\leq p^2$. The number of solutions of the  equation in the second line of \eqref{trick} is the sum, over $1\leq i\lesssim 1$, of the number of incidences between the above $m$ planes and the point set $\mathcal P_i$. By Theorem \ref{t:MR} it is $O(m^{3/2})$, for each $i$.

Thus the summation over $i\lesssim 1$ results only in the change of the power of $\log|A|$ hidden in the  incidence estimate \eqref{alm}, that is if $|A|^6\leq p^2\E^\times(A)$ the estimate \eqref{alm} is true, with a different factor of $\log|A|$ hidden in the $\lesssim$ symbol.

We pass from  formula \eqref{alm} to \eqref{f:ksdf} by setting  $q \sim  |P|t/|A_1|$, $\E^\times(A)\sim |P|t^2$ and using \eqref{need} to bound the remaining $\sqrt{|P|}$ in the numerator. 

We have established the formula \eqref{f:ksdf} as it is. The fact that energies can be swapped follow by taking the above set $P$ as a subset of $A-A,$ rather than of $A/A$. and repeat the argument. The only modification is that the equivalent of \eqref{trick} will now deal with $\E^\times(A_1)$, followed by the equivalent of \eqref{alm}, as worked out explicitly in the proof of Proposition 1 in \cite{AMRS}.

This completes the proof of Proposition \ref{l:BW_FF}.
\end{proof}

In the special case of the real or complex field we have a slightly stronger result via the Szemer\'edi-Trotter theorem.

\begin{proposition}\label{l:BW_R}
Let $A\subset \mathbb{C}$ be a  set. Then there is $A_{1}\subseteq A$ such that $|A_1|^2\gtrsim \E^\times(A)|A|^{-1}$ and \begin{equation}\E^{+}(A_1)\E^\times(A)\lesssim {|A_1|}^{9/2}|A| \,.\label{f:stuff}\end{equation}
The  energies  $\E^\times,\E^+$ in the above statement can be swapped (for some other $A_1$).
\end{proposition}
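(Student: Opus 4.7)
The strategy is to follow the proof of Proposition~\ref{l:BW_FF} verbatim through the pigeonhole setup, but to apply the Szemer\'edi-Trotter incidence bound in $\mathbb{C}^2$ (Theorem~\ref{t:ST}) in place of the $\F^3$ plane-point bound (Theorem~\ref{t:MR}) at the crucial step. I would begin by running the same dyadic pigeonholing: this produces a set $P \subseteq A/A$ of popular ratios with common multiplicity $\sim t$ on lines through the origin, the set $S \subseteq A \times A$ of points on those lines, and a subset $A_1 \subseteq A$ of popular abscissae of $S$ with multiplicity $q \lesssim |A_1|$. This yields the same auxiliary bounds as in Proposition~\ref{l:BW_FF}, namely $|A_1|^2 \gtrsim \E^\times(A)/|A|$, $|P| \lesssim |A_1|^4/\E^\times(A)$, and $q^2 \sim |P|\E^\times(A)/|A_1|^2$, together with the crucial structural fact that each $a \in A_1$ has $\gtrsim q$ representations as $\alpha/p$ with $(\alpha, p) \in A \times P$.

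Substituting these representations into the equation $a+b=a'+b'$ defining $\E^+(A_1)$ gives $q^2 \E^+(A_1) \le N$, where
\[
N := \#\{(a, a', \alpha, \alpha', p, p') \in A_1^2 \times A^2 \times P^2 : a + \alpha/p = a' + \alpha'/p'\}.
\]
It suffices to prove $N \lesssim |P|\cdot |A_1|^{5/2} \cdot |A|$, since combined with $q^2 \sim |P|\E^\times(A)/|A_1|^2$ this gives $\E^+(A_1)\E^\times(A) \lesssim |A_1|^{9/2}|A|$ as claimed. I would interpret $N$ as a weighted line-point incidence count in $\mathbb{C}^2$: take $A_1 \times A$ as the point set, and for each pair $(p, s) \in P \times S_0$, where $S_0 := A_1 + A/P$, the line $\ell_{p,s} := \{(a, \alpha) \in \mathbb{C}^2 : a + \alpha/p = s\}$ (of slope $-p$ and $\alpha$-intercept $ps$) carrying weight $r(s) := \#\{(a', \alpha', p') \in A_1 \times A \times P : a' + \alpha'/p' = s\}$. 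Then $N = \sum_{(p, s)} r(s)\,|\ell_{p, s} \cap (A_1 \times A)|$. Applying Theorem~\ref{t:ST} delivers a main term of the form $(|P||S_0| \cdot |A_1||A|)^{2/3}$, and this, combined with the ST-derived lower bound $|S_0| \gtrsim \sqrt{|P||A_1||A|}$ (obtained by a direct application of Szemer\'edi-Trotter to the unweighted incidence count $\sum_{(p, s)} |\ell_{p, s} \cap (A_1 \times A)| = |P||A_1||A|$) and a dyadic decomposition in the weights $r(s)$, yields the target estimate. For the symmetric statement with $\E^+, \E^\times$ swapped, I would take $P$ to be a popular subset of $A - A$ rather than $A/A$, and apply the analogous incidence count to the equation defining $\E^\times(A_1)$ in place of the additive one.

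\textbf{Main obstacle:} The technical heart is the weighted application of Szemer\'edi-Trotter. Several triples $(a', \alpha', p')$ may yield the same value of $s = a' + \alpha'/p'$, so the line $\ell_{p, s}$ carries a multiplicity $r(s)$ that varies substantially with $s$; handling this cleanly requires a dyadic decomposition of lines by weight and a separate application of Szemer\'edi-Trotter at each dyadic level (incurring a factor of $\log|A|$, absorbed into the $\lesssim$ notation). Moreover, the lower-order Szemer\'edi-Trotter terms and the contribution from the top dyadic level threaten to dominate the main term in certain parameter regimes; controlling them requires the bounds $|P| \lesssim |A_1|^4/\E^\times(A)$ and $|A_1|^2 \gtrsim \E^\times(A)/|A|$ from the pigeonhole setup, paralleling the control of secondary terms in the proof of Proposition~\ref{l:BW_FF}.
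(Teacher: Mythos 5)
Your pigeonhole setup and the auxiliary bounds are correct, and the observation that each $a_1 \in A_1$ admits $\gtrsim q$ representations as $\alpha/p$ is exactly the structural fact the paper uses. However, the reduction to bounding $N := |\{a+\alpha/p = a'+\alpha'/p'\}|$ and then dividing by $q^2$ gives away the gain that the Szemer\'edi--Trotter theorem offers over the general-field incidence bound. A dyadic Szemer\'edi--Trotter analysis of $N = \sum_s r(s)^2$, where $r$ is the representation function of $A_1 + A/P$, delivers only $N \lesssim (|A_1||A||P|)^{3/2}$: the first moment $\sum_s r(s) = |A_1||A||P|$, which controls the low-multiplicity dyadic levels, is large. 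Chasing through $q^2 \sim |P|\E^\times(A)/|A_1|^2$ and $|P| \lesssim |A_1|^4/\E^\times(A)$ this yields $\E^+(A_1)\lesssim |A_1|^{11/2}|A|^{3/2}\E^\times(A)^{-3/2}$, which is precisely the estimate of Proposition~\ref{l:BW_FF} over a general field, \emph{not} the stronger \eqref{f:stuff}. Your target $N \lesssim |P|\,|A_1|^{5/2}|A|$ is not what Szemer\'edi--Trotter provides; it is stronger than $(|A_1||A||P|)^{3/2}$ exactly when $|A||P| \gg |A_1|^2$, which can happen.

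The point you are missing is that one must \emph{not} convert to $N$ before applying Szemer\'edi--Trotter. The paper (see the footnote to the actual proof, which unpacks Lemma~13 and Corollary~11 of \cite{KS2}) instead bounds $\E^+(A_1) = \sum_s r_{A_1+A_1}(s)^2$ directly: for a dyadic level set $S_\tau$ of the representation function of $A_1 + A_1$ (whose first moment is only $|A_1|^2$), each $s \in S_\tau$ has $\ge \tau q$ representations of the form $a + \alpha p$, so Szemer\'edi--Trotter applied to the $|A_1||P|$ lines and the points $A\times S_\tau$ gives $|S_\tau| \lesssim (|A_1||P||A|)^2/(\tau q)^3$. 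Balancing the high-multiplicity contribution against the low-multiplicity bound $\tau^*|A_1|^2$ produces $\E^+(A_1) \lesssim |A||P|\,|A_1|^2/q^{3/2}$, which is the sharper $d_*(A_1)^{1/2}|A_1|^{5/2}$ bound. The distinction is genuine and is exactly what separates $\d = 1/4$ from $\d = 1/5$ in Theorem~\ref{t:BW_R}.
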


\begin{proof} One repeats the pigeonholing  arguments in the proof of Proposition~\ref{l:BW_FF} with the same notations $t,P,q,A_1$ up to and inclusive of  the estimate \eqref{need}.  Without loss of generality we assume that $A_1$ is the set of popular abscissae $A'$ in the construction of Proposition~\ref{l:BW_FF}.

Using the notation of \cite{KS1}, we observe that
$$A_1\subset \text{Sym}_q(A,P):=\{x:|A\cap xP^{-1}|\geq q\}\,,$$
which means that, by construction, each member of $A_1$ can be represented at least $q$ times as a ratio from $A/P$ (or a product $AP$, since $P=P^{-1}$).

Using Lemma 13 and Corollary 11 of  \cite{KS2} we conclude\footnote{For a reader not willing to consult \cite{KS2} we sketch the argument that goes back to the paper of Elekes \cite{E} and is similar to the one in the conclusion of the proof of the Proposition \ref{l:BW_FF}. Instead of  Theorem \ref{t:MR} one uses the Szemer\'edi-Trotter theorem to estimate the cardinality of the set $S_\tau$ of sums $s=a+b$ in $A_1+A_1$ with approximately $1\leq\tau\leq |A_1|$ realisations. One can rewrite $s=a+ \a p,$ with $\a\in A$, $p\in P^{-1}=P$, and therefore there are at least $q\tau$ incidences between the point set $A\times S_\tau$ and a set of $|P||A_1|$ lines.  Theorem \ref{t:ST} then gives the upper bound on $|S_\tau|$; recycling this into an energy estimate is a standard exercise. \label{ftn}}
that
\begin{equation}\label{shen} \E^+(A_1) \ll d_*(A)^{1/2} |A_1|^{5/2},\end{equation} where the parameter (see \cite{KS2})
$$d_*(A_1) \le \frac{|A|^2|P|^2}{q^3 |A_1|}.$$
Using the relations $|A_1|q\sim |P|t\lesssim |A_1|^2$ from the proof of Proposition \ref{l:BW_FF} we obtain
$$d_*(A_1) \sim \frac{|A|^2|A_1|^2}{|P|t^3} \sim\frac{|A|^2|A_1|^2|P|t}{{\E^\times (A)}^2}\sim\frac{|A|^2|A_1|^4}{{\E^\times (A)}^2}\,.$$

Substituting the latter estimate into \eqref{shen} completes the proof of Proposition \ref{l:BW_R}. The fact that the energies $\E^\times,\E^+$ can be swapped follows taking $P\subseteq A-A$ instead of $A/A$ and repeating the argument. See also Theorem 20 in \cite{KS2}. 
\end{proof}

\begin{remark} \label{rrem} The same proof can be easily modified to an application of Theorem \ref{t:SzT_Fp} here instead of Theorem \ref{t:ST}. The latter theorem would also enable one to obtain decomposition estimates involving higher energies, along the lines of those obtained by the second author \cite{s_E_k} over the reals. 

 We note, however, that the proofs of Propositions \ref{l:BW_FF}, \ref{l:BW_R} use the symmetry between the $x$ and $y$-axes and are not applicable to the quantities $\mathrm{D}^{+}, \mathrm{D}^\times$, studied by the second author in \cite{s_E_k}. 
\end{remark}

To conclude the proof of Theorem \ref{t:BW_R}, the above intermediate results are iterated via a simple lemma: see, e.g.,  \cite{TaoVubook}.

\begin{lemma}\label{1/4ineq}
Let $A_1,\dots,A_n$ be  subsets of an abelian group.
Then
$$
    \left(\E^{+} \left(\bigcup_{i=1}^n A_i\right)\right)^{1/4} \le \sum_{i=1}^n (\E^{+} (A_i))^{1/4}.
$$
\end{lemma}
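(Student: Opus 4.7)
The plan is to recognise $\E^{+}(A)^{1/4}$ as a genuine norm (modulo positivity) on indicator functions and derive the inequality from Minkowski in $\ell^2$ together with a Cauchy-Schwarz bound on cross-energy. Writing the representation function $r_A(x):=|\{(a,b)\in A^2 : a+b=x\}|=(1_A*1_A)(x)$, one has the convolution identity $\E^{+}(A) = \sum_x r_A(x)^2 = \|1_A*1_A\|_{\ell^2}^2$, so that $\E^{+}(A)^{1/4} = \|1_A*1_A\|_{\ell^2}^{1/2}$.

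First, set $B := \bigcup_{i=1}^n A_i$ and $f:=1_B$, $f_i := 1_{A_i}$. The key observation is \emph{positivity}: since $f \le \sum_i f_i$ pointwise and the $f_i$ are non-negative, the convolutions obey the pointwise bound
$$ f*f \;\le\; \Bigl(\sum_i f_i\Bigr)*\Bigl(\sum_j f_j\Bigr) \;=\; \sum_{i,j} f_i*f_j. $$
Taking $\ell^2$-norms and applying Minkowski's inequality then yields
$$ \E^{+}(B)^{1/2} \;=\; \|f*f\|_{\ell^2} \;\le\; \sum_{i,j} \|f_i*f_j\|_{\ell^2}. $$

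Next, I would recognise each summand on the right as the square root of the cross-energy $\E^{+}(A_i,A_j):=|\{(a,a',b,b') \in A_i^2 \times A_j^2 : a+b=a'+b'\}|$; rewriting $\E^{+}(A_i,A_j) = \sum_t r_{A_i - A_i}(t)\,r_{A_j - A_j}(t)$ and applying Cauchy-Schwarz in $t$ gives the familiar inequality $\E^{+}(A_i,A_j) \le \E^{+}(A_i)^{1/2}\E^{+}(A_j)^{1/2}$. Substituting,
$$ \E^{+}(B)^{1/2} \;\le\; \sum_{i,j}\E^{+}(A_i)^{1/4}\E^{+}(A_j)^{1/4} \;=\; \Bigl(\sum_{i=1}^n \E^{+}(A_i)^{1/4}\Bigr)^2, $$
and one more square root delivers the stated bound. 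I anticipate no real obstacle; the only delicate point is that positivity of the indicator functions (and their convolutions) is essential to justify the pointwise domination $f*f \le \sum_{i,j} f_i*f_j$, which is why the argument is morally a ``positive $L^4$ triangle inequality''. On a finite abelian group, an equivalent one-line proof proceeds via Plancherel's identity $\E^{+}(A) = \|\widehat{1_A}\|_{\ell^4}^4$ and the actual $L^4$ triangle inequality after refining the $A_i$ to a disjoint cover of $B$ (which is allowed by monotonicity of $\E^{+}$ under inclusion).
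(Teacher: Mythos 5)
Your proof is correct, and the paper in fact offers no proof of its own for this lemma, citing only \cite{TaoVubook}. Your primary argument — pointwise domination $1_B*1_B \le \sum_{i,j}1_{A_i}*1_{A_j}$ by positivity, then monotonicity of $\|\cdot\|_{\ell^2}$ on nonnegative functions, Minkowski, the identity $\|1_{A_i}*1_{A_j}\|_{\ell^2}^2 = \E^+(A_i,A_j)$, and the cross-energy Cauchy--Schwarz bound $\E^+(A_i,A_j)\le \E^+(A_i)^{1/2}\E^+(A_j)^{1/2}$ — is complete and correct at every step. It is essentially the physical-space version of the argument in Tao--Vu, which is usually phrased via the $U^2$ norm or, as in your closing remark, via $\E^+(A)=\|\widehat{1_A}\|_{\ell^4}^4$ and the $L^4$ triangle inequality (together with monotonicity of $\E^+$ under inclusion to pass to a disjoint refinement of the $A_i$); your direct convolution argument buys the small advantage of not needing any Fourier theory on the ambient group.
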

Observe that  Propositions \ref{l:BW_FF}, \ref{l:BW_R} imply a weaker version of Theorem \ref{t:BW_R}, where one replaces $\max(B,C)$ with $\min(B,C)$.


\subsubsection{Conclusion of the proof of Theorem \ref{t:BW_R} and proofs of Corollary \ref{t:BW_C}}

\begin{proof}[Conclusion of the proof of Theorem \ref{t:BW_R}] Suppose $\E^\times(A)\lesssim |A|^{3-\delta}$ or there is nothing to prove: we can always take a small $B$ such that $|B|^3< |A|^{3-\delta}$ and $C=A\setminus B$.

Heuristically, we use Proposition \ref{l:BW_FF} for a general $\F$ and Proposition \ref{l:BW_R} in the special case $\F=\mathbb R$ or $\mathbb C$ to pull out from $A$, one by one, subsets $A_1$ with a small additive energy and stop once the energy of the remainder $C$ of $A$ will become smaller than  $|A|^{3-\delta}.$ This is bound to happen if $C$ becomes sufficiently small relative to $|A|$. The above lemmas are going to guarantee that $\E^+(B=A\setminus C)\lesssim |A|^{3-\delta}$.

Formally, let $M\geq 1$ be a parameter which we  choose later, assume that  $\E^\times(A)\leq |A|^{3}/M$ and
$|A|^3\leq p^2 / M$
in positive characteristic.

We construct a decreasing sequence of sets $C_1=A \supseteq C_2 \supseteq \dots \supseteq C_k$ and an increasing sequence of sets $B_0 = \emptyset \subseteq B_1 \subseteq \dots \subseteq B_{k-1} \subseteq A$ such that
    for any $j=1,2,\dots, k$ the sets $C_j$ and $B_{j-1}$  are disjoint and moreover $A = C_j \sqcup B_{j-1}$.
    If at some step $j$ we have $\E^{\times} (C_j) \leq |A|^3 / M$, we stop and set
    $C=C_j$, $B = B_{j-1}$, and $k=j-1$.
   Else, we have $\E^{\times} (C_j) > |A|^3 / M$. We apply Proposition \ref{l:BW_FF}
to the set $C_j$, finding the subset $D_j$ of $C_j$ such that
\begin{equation}|D_j|^2 \gtrsim \frac{\E^\times(C_j)}{|C_j|}>\frac{|A|^3}{M|C_j|}\geq\frac{|A|^2}{M}\label{hm}\end{equation}
and
\begin{equation} \label{dest}
    \E^{+} (D_j) \lesssim \frac{|D_j|^{11/2} |C_j|^{3/2}}{(\E^\times(C_j))^{3/2}}\leq\frac{M^{3/2}|D_j|^{11/2}|C_j|^{3/2}}{|A|^{9/2}}<|D_j|^{11/2}M^{3/2}|A|^{-3} \,.
\end{equation}
    After that we put $C_{j+1} = C_j \setminus D_j$, $B_j = B_{j-1} \sqcup D_j$ and repeat the procedure.
    In view of the uniform lower bound \eqref{hm} on $|D_j|$, the process will terminate, as $|C_j|$ decreases, after say $k$ iterations, when $\E(C=C_{k+1})\leq |A|^3/M$. We set $B=B_k = \bigsqcup_{j=1}^k D_j$ and $C=C_{k+1}$, so $ A=B\sqcup C$.

Trivially $|B| =\sum_{j=1}^{k}|D_j|\leq |A|$.\\

Then, using Lemma \ref{1/4ineq} and the bound \eqref{f:ksdf} of Proposition \ref{l:BW_FF} we get
\begin{align}\label{formula}
    \E^{+} (B) &\lesssim M^{3/2} |A|^{-3} \left( \sum_{j=1}^{k} |D_j|^{11/8}\right)^4\\ \nonumber
         &\leq M^{3/2}|A|^{-3}\max_j\{|D_j|\}^{3/2}\left(\sum_{j=1}^k|D_j|\right)^4\\  \nonumber
         &\leq  M^{3/2}|A|^{5/2}
 \,.
\end{align}
Optimising over $M$, with  $\E^{*} (C) \leq |A|^3/M$,  that is choosing $M=|A|^{1/5}$, we obtain the result for a general $F$. In particular, the constraint in terms of $p$ in positive characteristic boils down to $|A|^6<|A|^{3-1/5}p^2$, so $|A|\leq p^{5/8}.$

In the special case of $\F=\mathbb C$ the analogue of the estimates \eqref{dest} and \eqref{formula} comes from using \eqref{f:stuff}, rather than \eqref{f:ksdf}. Namely, we have
$$
 \E^{+} (D_j) \lesssim \frac{|D_j|^{9/2} |C_j|}{\E^\times(C_j)}\leq \frac{ M |D_j|^{9/2}} {|A|^{2} }\,
$$
and
$$    \E^{+} (B) \; \lesssim \; M |A|^{-2} \left( \sum_{j=1}^{k} |D_j|^{9/8}\right)^4
         \;\leq \; M |A|^{5/2}.
$$
Optimising with  $\E^{*} (C) \leq |A|^3/M$, yields $M=|A|^{1/4}$.
This proves Theorem  \ref{t:BW_R}. \end{proof}

\begin{proof}[Proof of Corollary \ref{t:BW_C}]
Observe that if $A$ has a subset $C$ with $|C|\geq 2|A|/3$ and  $\E^\times(C)\leq |C|^{8/3} $, there is nothing to prove: just take $B$ as half of $C$, with the trivial bound $\E^+(B)\leq |A|^3$ and rename $C$ as the other half.

Otherwise we repeat the argument in the preceding proof, finishing it at the first instance when either $\E^\times(C=C_{k+1}) <|A|^{8/3}$ or $|B=B_k|>|A|/3$. Without loss of generality we can assume that  in addition to the estimates \eqref{dest} and \eqref{formula} we have, say $|D_j|<|A|/100$ for every $j$ (by partitioning $D_j$ if necessary). Since the sequence of sets $\{C_j\}$ is decreasing, we have $\E^\times(C) \leq \E^\times(C_j),\;\forall j\leq k+1.$ Hence, in view of \eqref{f:ksdf}, the calculation \eqref{formula} becomes

$$
 \E^{+} (B)   \E^\times(C)^{3/2}  \; \lesssim \; |A|^{3/2} \left( \sum_{j=1}^{k} |D_j|^{11/8}\right)^4 \;\leq |A|^7,
$$
thus proving \eqref{formula1}.

Note that Proposition \ref{l:BW_FF} does indeed apply on each step in positive characteristic, for the condition $|A|^6 \leq p^2\E^\times(C)$ is satisfied, since we have assumed $\E^\times(C)\geq |A|^{8/3}$ and $|A|\leq p^{3/5}.$

To derive \eqref{formula2} of the Corollary from \eqref{formula1} we observe that according to the above proofs one can swap the two energies in \eqref{formula1}. We do this for the set $B$. Namely $B$ gets partitioned into $B'$ and $C'$, each of size at least $|B|/3$, such that
$$
\E^+(B')^{3/2} \E^\times(C') \lesssim |A|^7.
$$
Multiplying the latter two estimates we obtain
$$
(\E^\times(B) \E^+(B') )^{3/2} \cdot (\E^+(C)\E^\times(C') )\lesssim  |A|^{14}.
$$
If $\E^\times(B) \E^+(B')  \leq  \E^+(C)\E^\times(C')$,  we get
$$
(\E^\times(B) \E^+(B'))^{5/2} \lesssim |A|^{14},
$$
(and one can replace $B$ by $C'$, disjoint from $B'$) otherwise we get the same inequality involving the disjoint $C$ and $C'$.

Renaming the two subsets in question as $B$ and $C$ finishes the proof, the price we've paid is just that $B$ and $C$ no longer partition $A$.
\end{proof}


\section{Proof of Theorem \ref{t:fsmp} and Corollary \ref{objeli}}
We give a detailed sketch of the proof of Theorem \ref{t:fsmp}, which largely repeats our previous arguments, the key benefit being derived from using an estimate from \cite{MR-NS}.
\begin{proof}{Sketch of proof}

We invoke the additive version of the construction in the proof of Proposition \ref{l:BW_FF}, with the same notations, to derive a version of the proposition, given by  the forthcoming estimate \eqref{vers}. $P$ is now  a dyadic group of popular sums, with approximately (that is up to a constant factor) $t$ realisations each, that supports $|P|t^2 \gtrsim \E^+$ of additive energy. $S$ is the corresponding subset of $A\times A$. $A_1$ is the set of popular abscissae (ordinates) for $S$, with approximately $q$ realisations, and $q\lesssim |A_1|$. Moreover, $|S|t\lesssim |A_1|^2$ and $|A_1|\gtrsim \sqrt{\E^+(A)/|A|}.$

Assuming that $A$ does not contain zero, consider the multiplicative energy equation
$$ a/b = a'/b' :  \,a,b,a',b'\in A_1.$$

For the left-hand side there are  approximately $q$ choices to add some $c\in A$ in the numerator and some $d\in A$ in the denominator to replace it with

$$\frac{(a+c) - c}{ (b+d) - d}  = \frac{ s-c }{r-d}, $$
where $s,r \in P$. Thus $\E^\times(A_1)$ is bounded by a constant, times $q^{-4}$, times the number of solutions of
$$
\frac{s-c}{r-d} = \frac{s'-c'}{r'-d'} \neq 0,\infty:  \;(s,r,s',r')\in P^4,\; (c,d,c',d')\in A^4. $$
The latter equation has been studied, in particular in the paper of Murphy, Roche-Newton, and the second author  \cite{MR-NS} (see Lemma 2.5 therein) which proves the upper bond $\lesssim |P|^3|A|^3$ for the number of solutions.

Hence, once $|A_1|^2\gtrsim q|A_1| \gtrsim |P|t$ we have
\begin{equation}\label{vers}
\E^\times(A_1) \lesssim q^{-4} |P|^3 |A|^3 \lesssim  \frac{|P|^3 |A|^3} {|P|^4t^4/|A_1|^4}  = |A_1|^4|A|^3\frac{|P|^2t^2}{|P|^3t^6}\lesssim  \frac{ |A_1|^{8}|A|^3}{{\E^+(A)}^3}.
\end{equation}

A straightforward adaptation of the iterative argument in the first two passages of the proof of Corollary \ref{t:BW_C} to the latter estimate completes the proof.
\end{proof}

\subsection{Proof of Corollary \ref{objeli}}
Observe that inequality \eqref{vers} implies, by Cauchy-Schwarz, that 
\begin{equation}\label{subs}
|A_1\cdot A_1|, \;|A_1/A_1| \gtrsim \frac{{\E^+(A)}^3}{ |A_1|^{4}|A|^3}\geq \frac{{\E^+(A)}^3}{ |A|^7},
\end{equation}
hence the same lower bound for the supersets $|A\cdot A|, \;|A/A|$.
In \cite[Theorem 9]{KS1} a weaker estimate was established:
$$
|A\cdot A|, \;|A/A| \gtrsim \frac{{\E^+(A)}^4}{|A|^{10}}.
$$
The latter estimate was used to obtain \cite[Lemma 12, estimates (34), (36)]{KS1}, restated as \cite[Lemma 18, estimates (22), (24)]{KS2}. If one uses \eqref{subs} instead, this improves the term $L^{-16}$ in these estimates to $L^{-12}$. Recalculating \cite[Proof of Theorem 3 from (26) on]{KS2} yields the new sum-product exponent $\frac{4}{3}+\frac{1}{1509}$ as claimed. \qed

\section{Proof of Corollary~\ref{c:R_energy}}
\begin{proof}
Put $R=R[A]$, $R^* = R \setminus \{0\}$, and $\d =1/4$.
Using Theorem \ref{t:BW_mult}, we find $B,C \subseteq R$ such that $R = B\sqcup C$ and
$$
    \max\{ \E^{\times} (B), \E^{\times} (C-1)\} \lesssim  |R|^{3-\d} \,.
$$
If $|B|\ge |R|/2$ then we are done.
Suppose not.
Then $|C| \ge |R|/2$ and in view of formula $R=1-R$, see \cite{Shkredov_R[A]}, we obtain that
$C' := 1-C \subseteq R$, $|C'| = |C| \ge |R|/2$ and
$$
    \E^{\times} (C') = \E^{\times} (1-C) = \E^\times (C-1) \lesssim  |R|^{3-\d} \,.
$$
So, putting $R'$ equals $B$ or $C'$, we obtain the result.
To find the set $R''$ note that $(R^*)^{-1} = R^*$ and use the second part of Theorem \ref{t:BW_mult}.
\end{proof}

The same proof allows us to find a subset $A'_s$ of the set $A_s \cup (-A_s)$, $A_s = A\cap (A+s)$, $s\in A-A$,
$A \subset \mathbb C$ of cardinality $|A_s|/2$ such that $\E^\times (A'_s) \lesssim |A'_s|^{3-1/4}$.
This question is a dual of one which appeared in \cite{KS1}, \cite{KS2}.
The same result holds for some multiplicative analogue of the sets $A_s$, namely, $A^*_s = A\cap (s/A)$, $s\in AA$.

\section{Proof of Theorem~\ref{thm:had}}

We now turn to the proof of Theorem~\ref{thm:had}. We shall apply Corollary \ref{t:BW_C} to the set $A$: it applies when $|A|\leq p^{3/5}$, which we may assume, passing to a subset if $A$ is too big.

So, if $|A|\leq p^{3/5}$, Corollary \ref{t:BW_C} gives us two positive proportion disjoint subsets $B,C$ of $A$, whose energies satisfy the estimate \eqref{formula1} and whereon we consider equation \eqref{e:had}.

Let us denote
$$
Q= \left|
        \left\{\frac{ab-c}{a-d}:\;a,b,c,d\in A, a\neq d\right\}\right|.
$$

By Cauchy-Schwarz
\begin{align}\label{e:hadcs}
\Theta(|A|^8) = |B|^4 |C|^4  & \leq  \; Q \left(\sum_{x\in \mathbb{F}_{p}}\left|\left\{x=\frac{ab-c}{a-d}: \,a,b\in B;\,c,d\in C\right\}\right|\right)^2 .
\end{align}

Let us isolate the case $x=0$. This means $ab=c$ and $a'b'=c'$, so the trivial bound for this is $|A|^6$.

We can then denote
$$\mathcal E= \sum_{x\in \mathbb{F}^*_{p}}\left|\left\{x=\frac{ab-c}{a-d}: \,a,b\in B;\,c,d\in C\right\}\right|^2$$ and assuming $|A|>p^{1/2}$ rewrite \eqref{e:hadcs} as
\begin{equation}
|A|^8\ll Q\mathcal E,
\label{easy}
\end{equation}
for trivially $Q\leq p$. We further aim to find the upper bound on $\mathcal E$.

Rearranging and applying Cauchy-Schwarz once more yields
\begin{equation}\label{int}
\mathcal E = \sum_{x\neq 0}|\{a(b-x)=c-dx\}|^2\leq \sum_{x\neq 0}\E^\times(B,x+B)\E^+(C,xC)\,.
\end{equation}
We evaluate these energy terms using an argument of \cite{Petridis}, modifying it to suit our needs. Although \cite{Petridis} considers  the term containing $\E^+(C,xC)$ only, the analysis applies almost verbatim to the  quantity $\E^\times(B,x+B)$ as well.

\medskip
In the following lemmata, $A$ denotes a dummy set, to be replaced, respectively, by $C$ and $B$.
We first quote a well known fact, which has been in the literature since ``generic projections'' in  \cite{BKT}, recorded as Lemma 3 in \cite{Petridis}.
\begin{lemma}\label{ensum} Let $A\subseteq \F_p$ with $|A|>p^{1/2}$. Then
$$
\sum_{x\in \F_p^*} \E^+(A,xA), \; \sum_{x\in \F_p^*} \E^\times(A,x+A) \ll |A|^4.
$$\end{lemma}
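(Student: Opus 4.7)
The plan is to open each energy as a counting problem, exchange the sum over $x$ with the counting sum, and split the resulting quintuple count into a \emph{diagonal} case, in which the constraint forces a collapse of two of the variables (and $x$ ranges freely), and an \emph{off-diagonal} case, in which $x$ is uniquely determined by the remaining four variables of $A$. The hypothesis $|A|>p^{1/2}$ is precisely what is needed to absorb the diagonal term into the bound $O(|A|^4)$. This is essentially the ``generic projections'' bookkeeping recalled from \cite{BKT}.

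For the additive sum I would first write
\[
\sum_{x\in\F_p^*}\E^+(A,xA)\;=\;\bigl|\{(x,a_1,a_2,b_1,b_2)\in\F_p^*\times A^4: a_1-a_2=x(b_2-b_1)\}\bigr|.
\]
If $b_1=b_2$ the equation forces $a_1=a_2$, so there are $|A|^2$ such quadruples for each of the $p-1$ nonzero values of $x$, contributing $O(p|A|^2)$, which is $O(|A|^4)$ since $|A|^2>p$. If $b_1\neq b_2$, then for each quadruple $(a_1,a_2,b_1,b_2)\in A^4$ the equation determines $x$ uniquely; summing over the at most one allowed value of $x$ per quadruple gives a total contribution of at most $|A|^4$. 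Adding the two cases yields the claimed bound.

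The multiplicative sum is handled identically in spirit. The defining equation $a_1(x+b_1)=a_2(x+b_2)$ rearranges to $x(a_1-a_2)=a_2b_2-a_1b_1$. Since $0\notin A$, the diagonal case $a_1=a_2$ again forces $b_1=b_2$, contributing $O(p|A|^2)=O(|A|^4)$. In the off-diagonal case $a_1\neq a_2$ the value of $x$ is once more uniquely determined by $(a_1,a_2,b_1,b_2)\in A^4$, so the contribution is at most $|A|^4$. I do not foresee a genuine obstacle: both estimates follow from the same ``equation forces the free parameter'' dichotomy, and the only role of the hypothesis $|A|>p^{1/2}$ is to guarantee that the trivial diagonal term $p|A|^2$ is dominated by $|A|^4$.
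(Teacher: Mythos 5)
Your argument is correct, and it is precisely the ``generic projections'' computation the paper is alluding to: the paper does not prove Lemma \ref{ensum} itself, but cites it from \cite{BKT} and Petridis and describes the heuristic (``knowing $a,b,a',b'$ in the equation $a(b-x)=a'(b'-x)$ generically defines $x$, the constraint $|A|>p^{1/2}$ taking care of degeneracies'') that your diagonal/off-diagonal split makes rigorous. One small point to state explicitly: in the off-diagonal case the unique $x$ forced by the quadruple may be $0$ and hence excluded from $\F_p^*$, but since this only drops solutions it does not affect the upper bound of $|A|^4$; similarly in the additive case the sub-case $b_1=b_2$, $a_1\neq a_2$ contributes nothing. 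With those remarks, the proposal is complete and essentially the proof the paper has in mind.
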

Heuristically the above statement means that knowing, say $a,b,a',b'$ in the equation $a(b-x)=a'(b'-x)$ generically defines $x$, the constraint $|A|>p^{1/2}$ taking care of degeneracies. So the latter $\ll$ estimate is, in fact, an asymptotic identity.

Besides, by Cauchy-Schwarz, for every $x\in \F_p^*$ we have
\begin{equation} \E^+(A,xA),\; \E^\times(A,x+A) \geq |A|^4/p.\label{trb}\end{equation}
Indeed, we simply use $p$ as the upper bound for, say $|A+xA|$.

Combining the formula \eqref{int} -- where we set $\E^\times(B,x+B)= (\E^\times(B,x+B)-|B|^4/p) + |B|^4/p$  and similarly for $\E^+(C,xC)$ -- with Lemma \ref{ensum} we have, assuming $|A|>p^{1/2}$ and using the H\"older inequality we obtain:
\begin{align}\label{long}
\mathcal E&\ll \frac{|A|^8}{p} + \sum_{x\neq 0}\left(\E^+(C,xC)-\frac{|C|^4}{p}\right) \left(\E^\times(B,x+B)-\frac{|B|^4}{p}\right)\\ \nonumber
& \leq \frac{|A|^8}{p} +\left( \sum_{x\neq 0}\left(\E^+(C,xC)-\frac{|C|^4}{p}\right)^{5/3}\right)^{3/5} \left(\sum_{x\neq 0}\left(\E^\times(B,x+B)-\frac{|B|^4}{p}\right)^{5/2}\right)^{2/5}.
\end{align}
The choice of exponents in the H\"older inequality has been made to conform with the estimate \eqref{formula1} in the sequel.


The main part of the argument is the following proposition, an analogue of Proposition~8 in \cite{Petridis}.
\begin{proposition}\label{p:had} Let $A\subseteq \mathbb{F}_p$, with $p^{1/2}<|A|\leq p^{2/3}$ and $s\in (0,3)$. Then
$$
\sum_{x\neq 0}\left(\E^+(A,xA)-\frac{|A|^{4}}{p}\right)^{1+s}=O\left(p^{1-\frac{1}{3}s}\E^+(A)^{\frac{2}{3}s}|A|^{2+\frac{4}{3}s}\right)$$
and $$
\sum_{x\neq 0}\left(\E^\times(A,x+A)-\frac{|A|^{4}}{p}\right)^{1+s}=O\left(p^{1-\frac{1}{3}s}\E^\times(A)^{\frac{2}{3}s}|A|^{2+\frac{4}{3}s}\right).$$
\end{proposition}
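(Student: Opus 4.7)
The plan is to adapt the argument of Proposition~8 in \cite{Petridis}, whose structure is: dyadic pigeonhole on level sets of $T(x) := \E^+(A, xA) - |A|^4/p$, two bounds on these level sets, and H\"older interpolation. I focus on the additive statement; the multiplicative variant runs word-for-word with the equation $a - a' = x(b' - b)$ replaced by $a(x + c) = a'(x + c')$ and $\E^+(A)$ replaced by $\E^\times(A)$.

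First I would record that $T(x) \geq 0$ for $x \neq 0$ (by Cauchy-Schwarz applied to the Fourier representation, or to the identity $T(x) = \sum_v \tilde r(v) \tilde r(xv)$ with $\tilde r(v) := r_{A-A}(v) - |A|^2/p$), so dyadic pigeonhole reduces the task to bounding $|X_\Delta| \Delta^{1+s}$ for each dyadic level set $X_\Delta = \{x \in \F_p^* : T(x) \sim \Delta\}$. The first-moment bound comes by direct counting: the number of quintuples $(a, a', b, b', x) \in A^4 \times \F_p$ with $a - a' = x(b' - b)$ equals $p|A|^2 + |A|^3(|A|-1)$, and subtracting the uniform contribution $(p-1)|A|^4/p$ together with the $x = 0$ diagonal $|A|^3$ yields $\sum_{x \neq 0} T(x) = p|A|^2 - 2|A|^3 + |A|^4/p \lesssim p|A|^2$ (using $|A| > p^{1/2}$), and hence
$$|X_\Delta| \Delta \lesssim p|A|^2.$$

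The second, incidence-based bound is the technical heart of the argument, derived from Theorem~\ref{t:MR}. For $x \in X_\Delta$ one has $\E^+(A, xA) \gtrsim \Delta$; pairing two of these equations, $a_i - a_i' = x(b_i' - b_i)$ for $i = 1, 2$, and eliminating $x$ produces the bilinear constraint $(a_1 - a_1')(b_2' - b_2) = (a_2 - a_2')(b_1' - b_1)$. I would encode this as plane-point incidences in $\F_p^3$ by taking planes parametrized by $(u_2, v_2, b_1)$ with $u_2 = a_2 - a_2'$, $v_2 = b_2' - b_2$ (weighted by $r_{A-A}(u_2) r_{A-A}(v_2)$), and points $(a_1, a_1', b_1') \in A^3$. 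Application of Theorem~\ref{t:MR}, combined with Cauchy-Schwarz on the plane weights (which aggregates $\sum_{u,v} r(u)^2 r(v)^2 = \E^+(A)^2$), should yield
$$|X_\Delta| \Delta^3 \lesssim p^{1/3} \E^+(A)^{4/3} |A|^{14/3};$$
the hypothesis $|A| \leq p^{2/3}$ is precisely what guarantees the input constraint $n \leq p^2$ in Theorem~\ref{t:MR}.

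Finally, I would interpolate via
$$|X_\Delta| \Delta^{1+s} = (|X_\Delta| \Delta)^{1 - s/2} (|X_\Delta| \Delta^3)^{s/2}$$
for $s \in (0, 2]$, which gives exactly $p^{1 - s/3} \E^+(A)^{2s/3} |A|^{2 + 4s/3}$; for $s \in (2, 3)$ I would supplement with the pointwise bound $T(x) \leq \sum_v \tilde r(v)^2 = \E^+(A) - |A|^4/p$ (Cauchy-Schwarz on $T(x) = \sum_v \tilde r(v) \tilde r(xv)$) to push the estimate through. The main obstacle will be the second bound: arranging the plane-point configuration so that the weighted application of Theorem~\ref{t:MR} aggregates the $r_{A-A}$-weights into the correct power of $\E^+(A)$ without losing the $|A|$-exponent, and controlling the collinearity parameter $k$, which in the naive configuration can be as large as $|A|$. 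This delicate weight-handling is the technical crux and the place where a careful reading of the Petridis argument is essential.
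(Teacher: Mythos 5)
Your overall structure (nonnegativity of $T(x) = \E^+(A,xA)-|A|^4/p$, a first-moment bound $\sum_{x\ne 0}T(x)\lesssim p|A|^2$, a restricted-moment bound via Theorem~\ref{t:MR}, dyadic decomposition of level sets $X_\Delta$, interpolation) is morally the same as the paper's, which does the decomposition by splitting at a threshold $M=(p\E^+(A)/8|A|^4)^{1/3}$ and invokes Lemma~\ref{proplemma4} (itself an $L^1$ bound on popular slices derived from the incidence Lemma~\ref{inccount}). The problem is your second estimate, and it creates a genuine gap.

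The bound the incidence argument actually yields is $p$-free and has $\Delta^4$, not $\Delta^3$. From Lemma~\ref{inccount} one gets $|X_\Delta|\Delta\ll\E^+(A)^{1/2}|A|^{3/2}|X_\Delta|^{3/4}$, hence $|X_\Delta|\Delta^4\lesssim\E^+(A)^2|A|^6$, which is exactly Lemma~\ref{proplemma4}. Your claimed $|X_\Delta|\Delta^3\lesssim p^{1/3}\E^+(A)^{4/3}|A|^{14/3}$ is precisely the $(1/3,2/3)$ geometric mean of $(p|A|^2)$ and $(\E^+(A)^2|A|^6)$, i.e.\ the interpolation of the paper's two bounds; it is strictly weaker whenever $\E^+(A)>|A|^4/p$, i.e.\ always. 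Because your second bound only has $\Delta^3$, the convex interpolation $(|X_\Delta|\Delta)^{1-s/2}(|X_\Delta|\Delta^3)^{s/2}$ forces $s\le 2$, so you do not reach the full range $s\in(0,3)$ asserted in the Proposition. Your suggested patch for $s\in(2,3)$ — supplementing with the pointwise bound $T(x)\le\E^+(A)$ — does not close the gap: multiplying $|X_\Delta|\Delta^3\lesssim p^{1/3}\E^+(A)^{4/3}|A|^{14/3}$ by $\E^+(A)^{s-2}$ gives $p^{1/3}\E^+(A)^{s-2/3}|A|^{14/3}$, which exceeds the target $p^{1-s/3}\E^+(A)^{2s/3}|A|^{2+4s/3}$ by the factor $(p\E^+(A)/|A|^4)^{(s-2)/3}\ge1$. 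To prove the statement as written you need the stronger, $p$-independent $\Delta^4$ bound; the dyadic sum over level sets in the ``large $\Delta$'' regime then converges precisely for $s<3$.

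One further caution: Theorem~\ref{t:MR} (via Lemma~\ref{inccount}) requires $|A|^2|X|=O(p^2)$ and $|X|=O(|A|^2)$, and neither holds unconditionally for a level set $X_\Delta$. The paper resolves this by first using the trivial first-moment bound (Lemma~\ref{BKT}) to show $|X|$ is small whenever $\Delta$ is above the threshold $\E^+(A)/M$ — this is the content of the first few lines of the proof of Lemma~\ref{proplemma4} — before applying the incidence lemma; this needs to appear explicitly in your argument, and is where the hypothesis $p^{1/2}<|A|\le p^{2/3}$ really enters.
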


Proposition \ref{p:had} relies on the following lemma, the analogue of Theorem 2 in \cite{Petridis}, which follows from Theorem \ref{t:MR}.

\begin{lemma}\label{inccount}
Let $A\subseteq \mathbb{F}_p,$ $X\subseteq \mathbb{F}_p^*$.
Suppose
$|X|=O(|A|^2)$ and $|A|^2|X|=O(p^2)$. Then
$$\sum_{x\in X}\E^+(A,xA) \ll \E^+(A)^{1/2}       |A|^{3/2}|X|^{3/4}$$
and
$$\sum_{x\in X}\E^\times(A,x+A) \ll \E^\times(A)^{1/2}|A|^{3/2}|X|^{3/4} \,.$$
\end{lemma}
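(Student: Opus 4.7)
Our plan is to prove both estimates by the same two-step recipe: a Cauchy-Schwarz to reduce the target to a higher-order energy-like quantity, followed by a point-plane incidence count to which Theorem~\ref{t:MR} applies. We detail the additive inequality; the multiplicative one is parallel.

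\emph{Reformulation and Cauchy-Schwarz.} Rearranging $a_1+xb_1 = a_2+xb_2$ as $a_1-a_2 = x(b_2-b_1)$ and writing $r:=r_{A-A}$ and $R(t) := \sum_{x\in X} r(xt)$, we obtain
$$
\sum_{x\in X}\E^+(A, xA) \;=\; \sum_t r(t)\, R(t).
$$
Cauchy-Schwarz applied to this sum yields $\bigl(\sum_x \E^+(A,xA)\bigr)^2 \le \E^+(A)\cdot V$, where
$$V := \sum_t R(t)^2 \;=\; \bigl|\{(a_1,a_2,a_3,a_4,x_1,x_2)\in A^4\times X^2 : x_2(a_1-a_2) = x_1(a_3-a_4)\}\bigr|.$$

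\emph{Incidence setup and application of Theorem~\ref{t:MR}.} Linearise the bilinear equation defining $V$ via the substitution $u = x_2 a_1$: it becomes $u - a_2 x_2 - x_1 a_3 + x_1 a_4 = 0$, affine in the coordinates $(u, a_3, x_2)\in\F^3$. Set
$$P = \{(x_2 a_1, a_3, x_2) : (a_1,a_3,x_2)\in A\times A\times X\} \subset \F^3$$
(this parametrisation is injective, so $|P|=|A|^2|X|$), and let $\Pi$ consist of the $|A|^2|X|$ planes indexed by $(a_2,a_4,x_1)\in A\times A\times X$. Then $I(P,\Pi) = V$. Theorem~\ref{t:MR} applies since its positive-characteristic condition $|P|\le p^2$ is precisely the hypothesis $|A|^2|X| = O(p^2)$; its main term $m\sqrt n$ is $O(|A|^3|X|^{3/2})$. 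A case analysis of a line $L\subset\F^3$, split according to whether the third coordinate is constant on $L$, gives the collinearity parameter $k\ll\max(|A|,|X|)$, which under $|X|\ll|A|^2$ is absorbed into the main term. Thus $V\ll |A|^3|X|^{3/2}$, and combining with the Cauchy-Schwarz step,
$$
\sum_{x\in X}\E^+(A,xA) \;\le\;\E^+(A)^{1/2}V^{1/2} \;\ll\; \E^+(A)^{1/2}|A|^{3/2}|X|^{3/4}.
$$

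\emph{Multiplicative version and main obstacle.} For the multiplicative bound, we recast $a_1(x+c_1)=a_2(x+c_2)$ with $c_i\in A$, apply an analogous Cauchy-Schwarz producing a sixfold count, and linearise it via the dual substitution $u = a_1 x$. Theorem~\ref{t:MR} then finishes the argument along the same lines. The principal technical hurdle is the collinearity estimate $k\ll\max(|A|,|X|)$: because the first coordinate of a point of $P$ is the product $x_2 a_1$, $P$ is not a Cartesian product, so one must separately treat ``horizontal'' lines (on which $x_2$ is constant, forcing $a_1$ and $a_3$ to vary linearly in the line parameter) and ``slanted'' lines (on which each $x_2\in X$ yields at most one admissible point) to rule out larger collinear configurations.
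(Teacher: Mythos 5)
Your proof is correct and follows the same overall strategy as the paper (and its source, Theorem~2 of Petridis): Cauchy--Schwarz to pass to a fourfold count, followed by an application of the point--plane incidence theorem (Theorem~\ref{t:MR}) with the same parameter choices $m=n=|A|^2|X|$, the hypothesis $|A|^2|X|=O(p^2)$ guaranteeing applicability and $|X|=O(|A|^2)$ absorbing the $km$ term. The paper writes out the multiplicative estimate using a fractional-linear change of coordinates on the equation $\frac{x+b}{x+d}=\frac{x'+b'}{x'+d'}$ and cites Petridis for the additive one, whereas you work out the additive case via the affine substitution $u=x_2a_1$ and assert the multiplicative case is parallel; these are the same argument in slightly different coordinates.
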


To keep the exposition more self--contained we sketch the proof of Lemma \ref{inccount}, dealing with its second estimate: the first one was the result of Theorem 2 in \cite{Petridis}, and the two proofs are in essence identical.
\begin{proof}(Sketch of proof.)
Let $Y=\sum_{x\in X}\E^\times(A,x+A)$, that is the number of solutions of the equation
$$
a(x+b)=c(x+d): \;x\in X,\,a,b,c,d\in A.
$$
It is easy to see that the condition $|X|=O(|A|^2)$ implies that the number of trivial solutions of the last equation (e.g. $a=c=0$ and $x,b,d$ are any or $x=-b$, $c=0$ and $a,d$ are arbitrary) is negligible.
Rearranging and applying Cauchy-Schwarz we get  $Y\ll (\E^\times(A)Z)^{1/2}$, where $Z$ is the number of solutions of the equation
$$
\frac{x+b}{x+d} = \frac{x'+b'}{x'+d'}:\;x,x'\in X, b,d,b',d' \in A.
$$
The number of nontrivial solutions of the latter equation can be estimated by using Theorem 19 in \cite{AMRS}, which is a particular case of Theorem \ref{t:MR}. This was done, in particular, in the proof of Corollary 8 of the latter paper. We nonetheless briefly show how.

Consider a family of planes in $\F_p^3$, with equations, in coordinates $(x_1,x_2,x_3)$ not to be confused with $x,x'\in X$, as follows:
$$
\frac{1}{x+d} x_1 - x_2 - b'x_3 +  \frac{x}{x+d} = 0,
$$
and their incidences with points $(x_1,x_2,x_3) = (b, \frac{x'}{x'+d'}, \frac{1}{x'+d'})$. It is easy to verify, see \cite{AMRS},  that there are $m=|X||A|^2$ planes and points. The constraints of Lemma \ref{inccount} ensure that the main term $m^{3/2}$ dominates in the estimate Theorem \ref{t:MR} and the theorem is applicable. Thereupon, the second estimate of Lemma \ref{inccount} follows.
\end{proof}

\begin{remark}
    Using methods from \cite{MR-NS} (see the proofs of Lemmas 2.3, 2.4)
    one can prove another variant of Lemma \ref{inccount}, namely,
    for any $A\subseteq \mathbb{F}_p$ and $X\subseteq \mathbb{F}_p^*$ with $|A| \leq p^{2/3}$
    the following holds
$$\sum_{x\in X}\E^+(A,xA) \ll |A|^{13/4} |X|^{1/2} \,,$$
and
$$\sum_{x\in X}\E^\times(A,x+A) \ll |A|^{13/4}|X|^{1/2} \,.$$
\end{remark}

Lemma \ref{inccount} can be restated in the standard way, similar to the well-known restatement of the Szemer\'edi-Trotter theorem, Theorem \ref{t:ST} here, as $O\left(\frac{n^2}{\tau^3} + \frac{n}{\tau}\right)$ as the upper bound on the number of $\tau$-rich lines. See Lemma~4 of \cite{Petridis}.
We quote an auxiliary Lemma from \ref{BKT} which appears (and is proved) explicitly in \cite{Petridis} as Lemma 3.

\begin{lemma}\label{BKT}
For every set $X \subseteq \F_p^*$ we have the following inequality
$$  \sum_{x\in X} \left(\E^+(A, xA) -\frac{ |A|^4}{p}\right),\, \sum_{x\in X} \left(\E^\times(A, x+A) -\frac{ |A|^4}{p}\right) \; \leq \; p|A|^2\,.$$

\end{lemma}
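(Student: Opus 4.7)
The approach is purely Fourier-analytic, treating the two estimates in parallel. For the additive estimate I will use the characters $e_p(t\xi) = e^{2\pi i t\xi/p}$ of $(\F_p,+)$; for the multiplicative one I will use the multiplicative characters $\chi$ of $\F_p^*$. In both cases the term $|A|^4/p$ is the contribution of the \emph{trivial} character, while the remaining contribution, once summed over $x\in X$, will be handled by Parseval.

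\textbf{Additive case.} Write $\widehat{1_A}(\xi)=\sum_{a\in A} e_p(a\xi)$. The standard additive-energy Fourier identity reads
\[ \E^+(A,xA)\;=\;\frac1p\sum_{\xi\in\F_p}|\widehat{1_A}(\xi)|^2\,|\widehat{1_{xA}}(\xi)|^2\;=\;\frac1p\sum_{\xi\in\F_p}|\widehat{1_A}(\xi)|^2\,|\widehat{1_A}(x\xi)|^2, \]
since $\widehat{1_{xA}}(\xi)=\widehat{1_A}(x\xi)$ for $x\neq 0$. The $\xi=0$ term contributes exactly $|A|^4/p$, so summing over $x\in X$ reduces the desired inequality to
\[ \frac1p\sum_{\xi\neq 0}|\widehat{1_A}(\xi)|^2\,\Bigl(\sum_{x\in X}|\widehat{1_A}(x\xi)|^2\Bigr)\;\le\;p|A|^2. \]
For each fixed $\xi\neq 0$ the map $x\mapsto x\xi$ is injective on $\F_p^*$, hence the inner sum is at most $\sum_{\eta\in\F_p^*}|\widehat{1_A}(\eta)|^2\le p|A|$ by Parseval. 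Applying Parseval once more to the $\xi$-sum yields the claimed bound $p|A|^2$.

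\textbf{Multiplicative case.} By orthogonality of characters of $\F_p^*$ (with $\chi(0)=0$), for $r\in\F_p^*$ one has
\[ \bigl|\{(a,b)\in A\times A:\,a(x+b)=r\}\bigr|\;=\;\frac1{p-1}\sum_{\chi}\overline{\chi(r)}\,\widehat{A}(\chi)\,\widehat{x+A}(\chi), \]
where $\widehat{A}(\chi)=\sum_{a\in A}\chi(a)$ and $\widehat{x+A}(\chi)=\sum_{b\in A,\,x+b\neq 0}\chi(x+b)$. By Plancherel on $\F_p^*$,
\[ \E^\times(A,x+A)\;=\;\frac1{p-1}\sum_{\chi}|\widehat{A}(\chi)|^2\,|\widehat{x+A}(\chi)|^2 \;+\;O(|A|^2\,\mathbf{1}_{-x\in A}), \]
where the $O$-term accounts for the degenerate case $r=0$ (that is, $x+b=0$ for some $b\in A$). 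The trivial character contributes $\le |A|^4/(p-1)\le |A|^4/p + O(|A|^4/p^2)$. For a fixed non-trivial $\chi$, a direct computation expanding $\sum_{x\in\F_p}|\widehat{x+A}(\chi)|^2$ reduces the off-diagonal contribution to sums $\sum_y\chi(1+c/y)=O(1)$ (a contracted Gauss-type sum), giving $\sum_{x\in\F_p^*}|\widehat{x+A}(\chi)|^2\le p|A|$. Parseval in $\chi$ then produces the desired upper bound $p|A|^2$, just as in the additive case; the $r=0$ exceptional term contributes $\sum_{x\in X}|A|^2\mathbf{1}_{-x\in A}\le |A|^3\le p|A|^2$, hence is absorbed.

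\textbf{Main obstacle.} The only slightly fiddly point is the bookkeeping in the multiplicative estimate: one has to strip off the $r=0$ and $x+b=0$ contributions before applying the Plancherel identity on $\F_p^*$, and then verify that the difference between the trivial-character main term $|A|^4/(p-1)$ and the claimed $|A|^4/p$ is also absorbed by $p|A|^2$. Given the assumptions $X\subseteq\F_p^*$ and $0\notin A$, all these error terms are easily controlled and do not affect the final constant.
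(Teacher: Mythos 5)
The paper does not prove this lemma: it cites it to \cite{Petridis} (Lemma 3 there), tracing back to the ``generic projections'' argument of \cite{BKT}. The proof in that literature is elementary counting: one sums $\E^+(A,xA)$ over \emph{all} $x\in\F_p^*$ (legitimate, since each summand $\E^+(A,xA)-|A|^4/p$ is nonnegative by \eqref{trb}), and the equation $a-c=x(d-b)$ contributes $p-1$ solutions for each of the $|A|^2$ pairs with $a=c$, $b=d$, and at most one solution otherwise, giving $\sum_{x\in\F_p^*}\E^+(A,xA)\le (p-1)|A|^2+|A|^4$; subtracting $(p-1)|A|^4/p$ and using $|A|\le p$ yields the bound. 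The multiplicative estimate is identical. Your Fourier-analytic route is genuinely different and, in the additive case, is clean and correct: the inner sum over $X$ of $|\widehat{1_A}(x\xi)|^2$ is majorised by $\sum_{\eta\neq 0}|\widehat{1_A}(\eta)|^2=p|A|-|A|^2$, and a second Parseval gives exactly $p|A|^2-|A|^3\le p|A|^2$.

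In the multiplicative case your sketch is in the right direction but the bookkeeping is not actually carried out, and as literally written it does not give the stated constant. You obtain a main term bounded by $p|A|^2$ and then \emph{add} to it the $r=0$ term ($\le|A|^3$) and the trivial-character discrepancy between $1/(p-1)$ and $1/p$ ($\le|A|^4/p$ after summing over $x$), arriving at $p|A|^2+O(|A|^3)$, i.e.\ $O(p|A|^2)$ rather than $\le p|A|^2$. That weaker form is in fact all the paper uses (Lemma \ref{proplemma4} and Proposition \ref{p:had} only need an $O(\cdot)$ input), so no application breaks; but you should either state the conclusion as $O(p|A|^2)$, or actually track the slack. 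If one does track it, the clean constant does emerge: for $-x\in A$ the $r=0$ contribution $|A|^2$ combines with the trivial-character discrepancy to the exact nonnegative quantity $|A|^2(p-|A|)^2/(p(p-1))$, and together with the identities $\sum_{x\in\F_p}|\widehat{x+A}(\chi)|^2=|A|(p-|A|)$ and $\sum_{\chi\neq\chi_0}|\widehat{A}(\chi)|^2=|A|(p-1-|A|)$ one can verify that main plus error is at most $p|A|^2$ precisely when $|A|(|A|-1)\le p(p-1)$, which always holds. So your method works, but the write-up glosses over exactly the step where the constant is decided; the elementary counting proof avoids this entirely and is the one the paper actually invokes.
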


\begin{lemma}\label{proplemma4}
Let $A\subseteq \mathbb{F}_{p}$, $|A|> p^{1/2}$ and $1\leq K\leq p\frac{\E^+(A)}{2|A|^{4}}$. \\
Then the number of $x\in \mathbb{F}_p^*$ such that $\E^+(A,xA)>\frac{\E^+(A)}{K}$  is $O\left(K^{4}\frac{|A|^{6}}{\E^+(A)^{2}}\right)$.
A similar bound holds for the number of $x\in \mathbb{F}_p^*$ such that $\E^+(A,x+A)>\frac{\E^\times(A)}{K}$.
\end{lemma}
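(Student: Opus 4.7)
Let $X=\{x\in\F_p^*:\E^+(A,xA)>\E^+(A)/K\}$ and set $N=|X|$. The plan is a standard level-set argument: bound $\sum_{x\in X}\E^+(A,xA)$ from below using the defining inequality of $X$, and from above using Lemma \ref{inccount}; comparison then yields the desired bound on $N$. The upper-bound hypothesis on $K$ enters to absorb the trivial contribution $|A|^4/p$: since $K\le p\E^+(A)/(2|A|^4)$ means $|A|^4/p\le \E^+(A)/(2K)$, every $x\in X$ satisfies
\begin{equation*}
\E^+(A,xA)-\frac{|A|^4}{p}\;>\;\frac{\E^+(A)}{2K}\,.
\end{equation*}

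The main technical issue is that Lemma \ref{inccount} requires the regime constraints $|X|=O(|A|^2)$ and $|A|^2|X|=O(p^2)$, and these may fail. Using $|A|>p^{1/2}$ one has $p^2/|A|^2<|A|^2$, so the second constraint is the binding one. Accordingly, I would select $X'\subseteq X$ of size $|X'|=\min\!\bigl(N,\lfloor c\,p^2/|A|^2\rfloor\bigr)$ for a sufficiently small absolute constant $c$; Lemma \ref{inccount} then applies to $X'$ and gives
\begin{equation*}
|X'|\cdot\frac{\E^+(A)}{2K}\;\le\;\sum_{x\in X'}\E^+(A,xA)\;\ll\;\E^+(A)^{1/2}|A|^{3/2}|X'|^{3/4},
\end{equation*}
whence $|X'|\ll K^4|A|^6/\E^+(A)^2$. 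If the minimum is attained by $N$ this is already the claim.

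The remaining case is $|X'|=cp^2/|A|^2$, i.e.\ $X$ is so large that Lemma \ref{inccount} alone is insufficient. Substituting into the bound for $|X'|$ above yields $p^2/|A|^2\ll K^4|A|^6/\E^+(A)^2$, i.e.\ $p\E^+(A)\ll K^2|A|^4$. To close the argument I would invoke Lemma \ref{BKT}, which (after applying the same lower bound $\E^+(A,xA)-|A|^4/p>\E^+(A)/(2K)$) produces $N\ll Kp|A|^2/\E^+(A)$; combining with $p\ll K^2|A|^4/\E^+(A)$ gives $N\ll K^3|A|^6/\E^+(A)^2$, which is even stronger than the claimed $K^4$-bound. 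The multiplicative variant $\E^\times(A,x+A)>\E^\times(A)/K$ is handled by literally the same scheme, using the second inequalities of Lemmas \ref{inccount} and \ref{BKT}. The one delicate point in execution is verifying that the constant $c$ chosen for truncation is simultaneously compatible with both regime constraints of Lemma \ref{inccount}, but this is automatic once one observes $p^2/|A|^2<|A|^2$.
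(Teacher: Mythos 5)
Your proof is correct, and it uses the same two key ingredients as the paper (Lemma~\ref{inccount} and Lemma~\ref{BKT}), together with the same observation that $K\le p\E^+(A)/(2|A|^4)$ makes the subtraction of $|A|^4/p$ harmless. The difference is in the ordering. You truncate $X$ preemptively to force the regime constraints of Lemma~\ref{inccount}, apply that lemma to the truncation, and then patch up the overflow case by combining the truncation condition with Lemma~\ref{BKT}. The paper avoids the truncation altogether: it applies Lemma~\ref{BKT} \emph{first}, before touching Lemma~\ref{inccount}, to show that $X$ itself already satisfies both regime constraints. Indeed, from Lemma~\ref{BKT} one gets $|X|\le 2pK|A|^2/\E^+(A)$, and then $K\le p\E^+(A)/(2|A|^4)$ forces $|X|\le p^2/|A|^2$, which by $|A|>p^{1/2}$ is also $<|A|^2$. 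With both constraints verified, Lemma~\ref{inccount} applies directly to $X$ and the level-set comparison finishes in one step. Both routes are sound; the paper's order eliminates the case split and is a bit shorter, while your truncation argument works as a fallback that would still function even if Lemma~\ref{BKT} gave a weaker a priori bound.
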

\begin{proof}
Note that $K$ is well defined,  by \eqref{trb}.
Let $X$ be the set of $x$ in question.\\
Firstly we show that $|A|^{2}|X|=O(p^{2})$. From Lemma~\ref{BKT} we know that
$$p|A|^2\geq \sum_{x\in X}\left(\E^+(A,xA)-\frac{|A|^4}{p}\right)\geq\sum_{x\in X}\left(\frac{\E^+(A)}{K}-\frac{|A|^4}{p}\right)\,.$$
Hence we have
$$p|A|^2\geq |X|\left(\frac{\E^+(A)}{K}-\frac{|A|^4}{p}\right),$$
and because of our choice of $K$, the first term dominates. So $|X|\leq \frac{2p |A|^2 K}{\E^+(A)}$; substitution yields $|A|^2|X|=O(p^2)$ and $|X| = O(|A|^2)$. \\
Next, we use Lemma~\ref{inccount} to obtain
$$\sum_{x\in X}\E^+(A,xA)=O(\E^+(A)^{1/2}|A|^{3/2}|X|^{3/4})\,.$$
All that remains is to evaluate the following: $$\frac{|X|\E^+(A)}{K}\leq \sum_{x\in X}\E^+(A,xA)=O(\E^+(A)^{1/2}|A|^{3/2}|X|^{3/4}).$$ One rearranges to obtain the desired bound on $|X|$.

The multiplicative energy case is identical.
\end{proof}

Thus we pass from having to satisfy the conditions of Lemma~\ref{inccount} to meeting those of Lemma~\ref{proplemma4}. We now prove Proposition~\ref{p:had}; as the proofs for both statements are the same, we prove the first one only.
\begin{proof}[Proof of Proposition~\ref{p:had}] Note that by Cauchy-Schwarz, for every $x\neq 0$,  we have $\E^+ (A,xA)\leq \E^+(A).$

Set $M=(p \E^+ (A) / 8|A|^4)^{1/3}$, so $1/2\leq M\leq  p \E^+ (A) /(2|A|^4)$ by \eqref{trb}.
Since also $p^{1/2}<|A|\leq p^{2/3}$,  for any $1\leq K \leq M$ the conditions of Lemma \ref{proplemma4} are satisfied.

For `small' energies when $\E^+ (A,xA)\leq \E^+(A)/M$, we rely on \eqref{trb} which ensures no sign alterations and proceed with a trivial inequality and Lemma~\ref{BKT} to obtain:
\begin{align*}
\sum\left( \E^+(A,xA)-\frac{\aa^4}{p}\right)^{1+s}  & \leq \sum\left(\frac{\E^+(A)}{M}\right)^s\left( \E^+(A,xA)-\frac{\aa^4}{p}\right) \\ &
\leq p\aa^2\left(\frac{\E^+(A)}{M}\right)^s, \end{align*}
where both sums are taken over the set $\{x\neq 0: \E^+(A,xA)\leq \E^+(A)/M\}$.
Note that if $M<1$ then the inequalities $\E^+(A,xA)-\frac{\aa^4}{p} \leq \E^+ (A,xA)\leq \E^+(A)/M$ hold by trivial reasons.

For `large' $\E^+(A,xA)$ we use Lemma \ref{proplemma4} and a dyadic argument. Let
$$X_i:=\{x: 2^i \E^+(A)/M < \E^+(A,xA)\leq \min(2^{i+1}\E^+(A)/M, \E^+(A)\}.$$

For $0\leq i\ll \log M$, set $2^{-i}M=K_i$, so $1\leq K_i \leq M.$

By Lemma~\eqref{proplemma4} we know that $|X_{i}|=O\left(2^{-4i}\frac{M^4|A|^{6}}{\E^{+}(A)^{2}}\right)$. So
\begin{align*}
\sum_{\substack{\E^+(A,xA)>\E^+(A)/M}}\left(\E^+(A,xA)-\frac{|A|^{4}}{p}\right)^{1+s}&\leq \; \sum_i\sum_{x\in X_{i}}\E^+(A,xA)^{1+s}\\
&\ll\; \sum_i|X_i|\E^+(A)^{1+s}2^{(1+s)i} M^{-(1+s)}  \\ & \ll \frac{ M^{3-s} |A|^{6}}{\E^+(A)^{1-s}}.
\end{align*}
Here we have used that $s\in (0,3)$ to sum the geometric progression in $i$, so the constant hidden in the last inequality depends on $s$. In view of the choice of $M$, independent of $s$, the latter two estimates match, concluding the proof of Proposition~\ref{p:had}.
\end{proof}

\bigskip

To conclude the proof of Theorem \ref{thm:had} we observe that by    Proposition \ref{p:had}, applied  to the estimate \eqref{long} we have
\begin{equation}\label{almost}
\mathcal E \;  \ll  \; |A|^8/p + p^{2/3} |A|^{10/3} [  \E^+( C) \E^\times (B)^{3/2} ] ^{4/15},
\end{equation}
given that $|A|\leq p^{3/5}$. Applying \eqref{formula1} of Corollary \ref{t:BW_C} (with the notations $B$ and $C$ reversed) it is immediate to conclude that the first term in the estimate dominates if
$|A|\geq p^{25/42} \log^K|A|$, where $K$ depends only on the power of $\log|A|$ hidden in the estimate \eqref{formula1}.  Note that $25/42<3/5$, so the upper bound on $|A|$ to make Corollary  \ref{t:BW_C} applicable has been satisfied.

For such $A$ the number of solutions of equation \eqref{e:had} is $O(|A|^8/p)$, which completes the proof of Theorem \ref{thm:had}. \qed


\section{Proof of Theorem~\ref{t:BSzG_Schoen} and ensuing statements}
\label{sec:further}
We prove Theorem~\ref{t:BSzG_Schoen}.

{\bf Proof.}
    Let $\E= \E^+(A) \geq \aa^3/K$.
    Put $A_s = A\cap (A-s)$.
    We have
\begin{equation}\label{f:energy_BSzG}
    \E = \sum_{s\in G} |A_s|^2 = \sum_s \sum_{x,y} A_s (x) A_s (y) = \sum_s \sum_{x,y} A(x) A(y) A(x+s) A(y+s)  \,.
\end{equation}
    Let  $\eps \in (0,1)$ be a real number which we will choose later.
    Let us put
$$
    P = P_\eps = \left\{ s ~:~ |A_s| \ge \frac{\eps \aa}{2K} \right\} \,.
$$
    Clearly, the set $P_\eps$ is symmetric for any $\eps$ and $|P_\eps| \le 2K \eps^{-1} |A|$.
    As
$$
    \sum_s\, \sum_{x-y\notin P_\eps} A_s (x) A_s (y) = \sum_{x-y\notin P_\eps} A(x) A(y) |A_{x-y}|
        <
            \frac{\eps \aa^3}{2K}
$$
then by combining the last estimate with (\ref{f:energy_BSzG}), we get
$$
    \sum_{s~:~ |A_s| \ge \aa/2K}\, \sum_{x-y\in P_\eps} A_s (x) A_s (y)
        -
            \eps^{-1} \sum_{s~:~ |A_s| \ge \aa/2K}\, \sum_{x-y\notin P_\eps} A_s (x) A_s (y)
                > 0 \,.
$$
It follows that there is $s$ with $|A_s| \ge \aa/(2K)$ satisfying,
$$
    \eps \sum_{x-y\in P_\eps} A_s (x) A_s (y) > \sum_{x-y\notin P_\eps} A_s (x) A_s (y) \,.
$$
In other words
\begin{equation}\label{tmp:17.07.2014_1}
    \sum_{x-y\in P_\eps} A_s (x) A_s (y) > (1-\eps) |A_s|^2 \,.
\end{equation}
Now let us consider a non--oriented graph (with loops)  $\mathcal{G}=(\mathcal{V},\mathcal{E})$ with the vertex set equal to $A_s$ such that its vertices  $x,y$ are connected iff $x-y\in P_\eps$.
It is easy to see from (\ref{tmp:17.07.2014_1}) that  $|\mathcal{E}| > (1-\eps) |A_s|^2$.
Put
$$
    A_* = \{ v \in V ~:~ \deg v \ge (1-2\eps) |\mathcal{V}| \} \subseteq A_s \,.
$$
Inequality  (\ref{tmp:17.07.2014_1}) implies $|A_*| \ge \eps |\mathcal{V}| \ge \eps |A|/(2K)$.
Moreover for any vertices $a_1, \dots, a_k \in A_*$ there are at least $(1-2k\eps)|\mathcal{V}|$ common neighbours $x\in \mathcal{V}$; i.e.~vertices  $x$ such that $(a_1,x), \dots, (a_k,x) \in \mathcal{E}$.
By putting $\eps = 1/(4k)$, we obtain the result.
$\hfill\Box$

\bigskip

Theorem \ref{t:BSzG_Schoen} enables one to achieve Balog-Wooley type decomposition results, dealing with the energies directly.


\begin{proof}(Sketch of proof of Proposition \ref{t:sp_E_E'})

Set $\E^\times (A) = |A|^3 /K$.
Using Theorem \ref{t:BSzG_Schoen} in multiplicative form
with $k=2$, we find a set $A_* \subseteq A$, $|A_*| \gg |A| /K$ and a set $P\subseteq A/A$, $|P| \ll K|A|$ such that for any $a,b \in A_*$ the following holds $|A\cap aP \cap bP| \gg |A|/K$.
Thus $\E^{+} (A_*)$
is bounded from above by
$$
    O\left( (|A| /K)^{-2}  |\{ a(p^{-1}_1 + p^{-1}_2) = b(p^{-1}_3 + p^{-1}_4) ~:~ p_1,p_2,p_3,p_4 \in P,\, a,b \in A \}|\right) \,.
$$
Using a consequence of the Szemer\'{e}di--Trotter theorem, see e.g. Lemma 2.5 from \cite{MR-NS}, we get
$$
    \E^{+} (A_*) \lesssim (|A| /K)^{-2} \cdot (\E^\times (A))^{1/2} |P|^3
        \ll
            K^5 |A| (\E^\times (A))^{1/2} \,.
$$
This completes the proof.
\end{proof}


\bigskip

\end{document}